\newtheorem{theorem}{Theorem}[section]
\newtheorem{lemma}[theorem]{Lemma}
\newtheorem{definition}[theorem]{Definition}
\newtheorem{assumption}[theorem]{Assumption}
\def\R{{\mathbb R}}
\def\ve{{\varepsilon}}
\def\dv{{\text{div}}}
\def\v{{\bf v}}
\def\n{{\bf n}}
\def\S{\operatorname{S}\!}
\def\XXint#1#2#3{{\setbox0=\hbox{$#1{#2#3}{\int}$}
\vcenter{\hbox{$#2#3$}}\kern-.87\wd0}}
\def\XXiint#1#2#3{{\setbox0=\hbox{$#1{#2#3}{\int}$}
\vcenter{\hbox{$#2#3$}}\kern-1.05\wd0}}
\def\XXintt#1#2#3{{\setbox0=\hbox{$#1{#2#3}{\int}$}
\vcenter{\hbox{$#2#3$}}\kern-.72\wd0}}
\def\Xinttt#1{\mathchoice
{\XXinttt\displaystyle\textstyle{#1}}%
{\XXinttt\textstyle\scriptstyle{#1}}%
{\XXinttt\scriptstyle\scriptscriptstyle{#1}}%
{\XXinttt\scriptscriptstyle\scriptscriptstyle{#1}}%
\!\int}
\def\XXinttt#1#2#3{{\setbox0=\hbox{$#1{#2#3}{\int}$}
\vcenter{\hbox{$#2#3$}}\kern-.52\wd0}}
\def\XXintttr#1#2#3{{\setbox0=\hbox{$#1{#2#3}{\int}$}
\vcenter{\hbox{$#2#3$}}\kern-.6\wd0}}
\def\XXintttt#1#2#3{{\setbox0=\hbox{$#1{#2#3}{\int}$}
\vcenter{\hbox{$#2#3$}}\kern-.78\wd0}}
\def\ddashinttt{\Xinttt-}
\begin{document}

\title{\huge Homogenization of oxygen transport in biological tissues} 

\author{{Anastasios Matzavinos$\,^{\mathrm{a}}$ and Mariya Ptashnyk$\,^{\mathrm{b}}$}\medskip\\
\small $^{\mathrm{a}}$ Division of Applied Mathematics, Brown  University, Providence, RI 02912, USA \vspace*{-0.05cm}\\
\small $^{\mathrm{b}}$ Division of  Mathematics, University of Dundee, Dundee, DD1 4HN, UK  }
\date{}


\maketitle

\begin{abstract}
In this paper, we extend previous work on the mathematical modeling of oxygen transport in biological tissues \cite{Matzavinos:2009}.  Specifically, we include in the modeling process the arterial and venous microstructure within the tissue by means of homogenization techniques. We focus on the two-layer tissue architecture investigated in \cite{Matzavinos:2009} in the context of abdominal tissue flaps that are commonly used for reconstructive surgery. We apply two-scale convergence methods and unfolding operator techniques to homogenize the developed microscopic model, which involves different unit-cell geometries in the two distinct tissue layers (skin layer and fat tissue) to account for different arterial branching patterns.

%

\end{abstract}

\section{Introduction}

Flow of blood and delivery of oxygen within a tissue is an area of intense research activity \cite{Galdi:2008}. At the larger end of the scale, flows through branching vessels have been studied extensively \cite{Bowles:2005,Smith:2003B,Smith:2003}. At the capillary scale, detailed experimental and simulation studies of flows in the microvasculature have been carried out \cite{Pries:1996,Popel:2000,Chaplain:2002,Chaplain:2006}, taking into account such factors as changes in the apparent blood viscosity with vessel diameter, and separation of red blood cells and plasma at bifurcations \cite{Karniadakis:2012}. 

A more coarse-grained approach,  pursued by several authors, has been to  treat blood flow through the vascular network as akin to fluid flow through a porous medium. On a smaller scale, this approach was used by Pozrikidis and Farrow \cite{Pozrikidis:2003} to describe fluid flow within a solid tumor. More recent work by Chapman et al. \cite{Chapman:2008} extended this approach to consider flow through a rectangular grid of capillaries within a tumor, where the interstitium was assumed to be an isotropic porous medium, and Poiseuille flow was assumed in the capillaries. Through the use of formal asymptotic expansions, it was found that on the lengthscale of the tumor (i.e., a lengthscale much longer than the typical capillary separation) the behavior of the capillary bed was also effectively that of a porous medium. A more phenomenological approach was taken by Breward et al. \cite{Breward:2003}, who developed a multiphase model describing vascular tumor growth. Here, the tumor is composed of a mixture of tumor cells, extracellular material, and blood vessels, with the model being used to investigate the impact of angiogenesis or blood vessel occlusion on tumor growth. A similar model was used by O'Dea et al. \cite{Odea:2008} to describe tissue growth in a perfusion bioreactor.

Matzavinos et al. \cite{Matzavinos:2009} adopted a similar multiphase modeling approach to investigate the transport of oxygen in  abdominal tissue flaps, commonly used for plastic and reconstructive surgery. Among existing types of abdominal tissue flaps, the deep inferior epigastric perforator (DIEP) flap is a central component in the current practice of several reconstructive surgical procedures \cite{Granzow:2006}. Nonetheless, complications such as fat necrosis and partial (or even total) tissue flap loss due to poor oxygenation still remain an important concern. Gill et al. \cite{Gill:2004} reported that in their study of 758 DIEP cases, 12.9 percent of the flaps developed fat necrosis and 5.9 percent of the  patients had to return to the operating room. 
In view of these data, Matzavinos et al.  \cite{Matzavinos:2009} investigated computationally the level of oxygenation in a tissue given its size and shape and the diameters of the perforating arteries. The approach adopted in \cite{Matzavinos:2009} considered a multiphase mixture of tissue cells, arterial blood vessels, and venous blood vessels, distributed throughout a domain of interest according to specified volume fractions. 

In this paper, we improve upon the coarse-grained description of  \cite{Matzavinos:2009} by employing a homogenization approach that takes into account the detailed microstructure of arterial and venous blood vessels. The microscopic model under consideration tracks the flow of blood in a specified geometry of arteries and veins within a tissue flap and the transport of oxygen in arteries, veins, and tissue.  A two-layer tissue architecture is adopted that involves different unit-cell geometries (accounting for different arterial branching patterns) in the two distinct tissue layers. We apply a combination of two-scale convergence methods \cite{Allaire:1992,Nguetseng:1989} and unfolding operator techniques \cite{Cioranescu:2012,Cioranescu:2008_1,Cioranescu:2008} to homogenize the microscopic model. Our main results are Theorems \ref{5-1}, \ref{6-1}, \ref{th:macro_delta} and \ref{th:macro_ox_delta}  on the macroscopic equations for the blood velocity fields and the oxygen concentrations under different scaling assumptions for the two tissue layers. Moreover, in Theorems \ref{4-4} and \ref{4-8}, we generalize to thin domains existing convergence results for the periodic unfolding method.

Derivations of the effective macroscopic equations are important for an accurate numerical simulation of the oxygen distribution in biological tissue. 
 To address different structures of tissues, we consider two different cases which correspond to different scaling regimes: (i) the depth of the skin layer is of the same order as the representative size of the microstructure and (ii) the depth of the skin layer is much larger than the size of the microstructure, but much smaller than the depth of the fat tissue.  For both cases we obtain the Darcy law as the macroscopic equation for blood flow in fat tissue. In the skin layer, we reduce the interface at the boundary of the fat tissue layer to two dimensions and obtain the Darcy law with the force term defined by inflow or outflow of blood from the fat tissue layer. We obtain reaction-diffusion-convection and reaction-diffusion equations as macroscopic models for oxygen transport in blood and tissue oxygen concentrations, respectively. The transport of oxygen between tissue and arterial blood on the surface of the blood vessels is represented by the reaction terms in the macroscopic equations. Additionally, in the macroscopic equations for the oxygen concentration in the skin layer, we obtain the source terms defined by the inflow and outflow of oxygen from the fat tissue layer.  
 
 The main difference in the results for the two cases is that the unit cell problems are distinct, hence we obtain different effective permeability tensors and diffusion matrices. Thus we obtain different flow velocity and oxygen concentration transport  equations depending on the relationship between the thickness of the skin layer and the structure of the blood vessel networks. The macroscopic equations derived from the microscopic description of the processes take into account the microscopic structure of blood vessels network and provide a more realistic model for the oxygen transport in biological tissues.

The literature on the homogenization of fluid flows in porous media is vast (see, e.g., \cite{Allaire:1989,Arbogast:2006, Hornung,Mikelic:1991, Tartar:1980} and the references therein). Some representative results in this area are as follows. The macroscopic equations  for water flow between two porous media with different porosities were first  derived  in \cite{AndroJaeger}.  A multiscale analysis of the Stokes and Navier-Stokes problems in a thin domain was conducted in \cite{Paloka:2000}, where the authors considered  applications to lower-dimensional models in fluid mechanics. Various results on the multiscale analysis of reaction-diffusion-convection equations in perforated domains with reactions on the surfaces of the microstructure can be found in \cite{Ptashnyk:2013, Hornung, JaegerHornung:1991,  JaegerHornung:1994}.
Macroscopic equations for elliptic and parabolic reaction-diffusion equations posed in domains separated by a thin perforated layer (e.g., a sieve or a  membrane) were derived in \cite{Cioranescu:2008, Jaeger:2006}. From a mathematical perspective, the novelties of this paper include (a) the analysis of the flow between a fixed-size domain (fat tissue layer) and an $\ve$-thin layer (skin layer) under an appropriate scaling of the transmission conditions, and (b) a different scaling of the reaction-diffusion-convection equations than the one commonly used in the literature (see, e.g.,  \cite{Jaeger:2006}).

\begin{figure}
\begin{center}
\includegraphics[height=8.5cm]{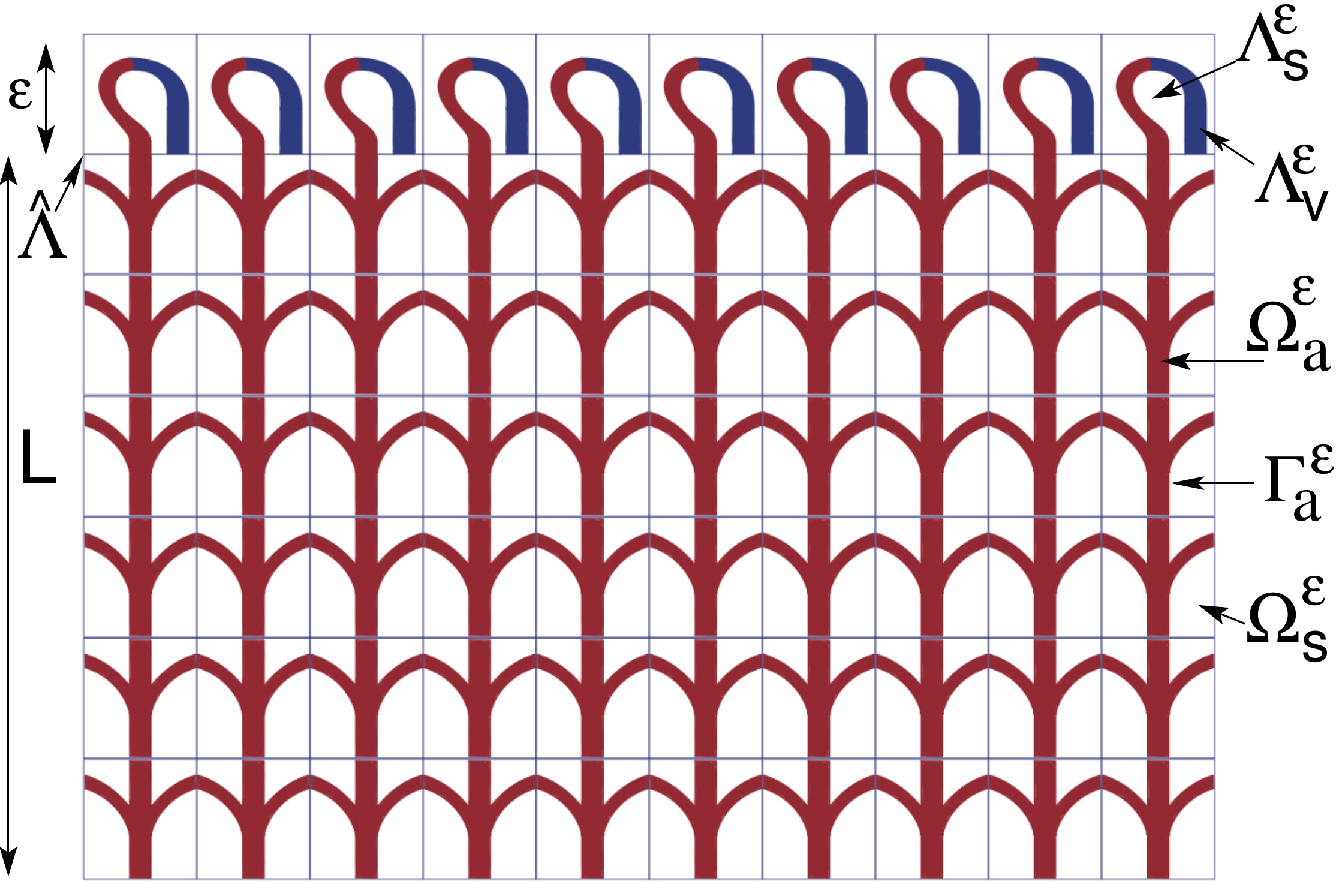}
\end{center}
\caption{Two dimensional schematic representation of a three-dimensional rectangular domain representing an abdominal tissue flap. The top layer of unit cells (denoted by $\Lambda^\ve$ in the text) corresponds to the dermic and epidermic layers of the skin, whereas the remainder of the domain (denoted by $\Omega$ in the text) corresponds to fat tissue. Only the arterial blood vessels are shown in the fat tissue layer. Arteries (in red) and veins (in blue) are shown in the skin tissue layer, which is characterized by the presence of arterial-venous connections, i.e. geometric regions where arteries and veins meet.  }\label{fig1}
\end{figure}
 
\begin{figure}
\begin{center}
\includegraphics[height=4.5cm]{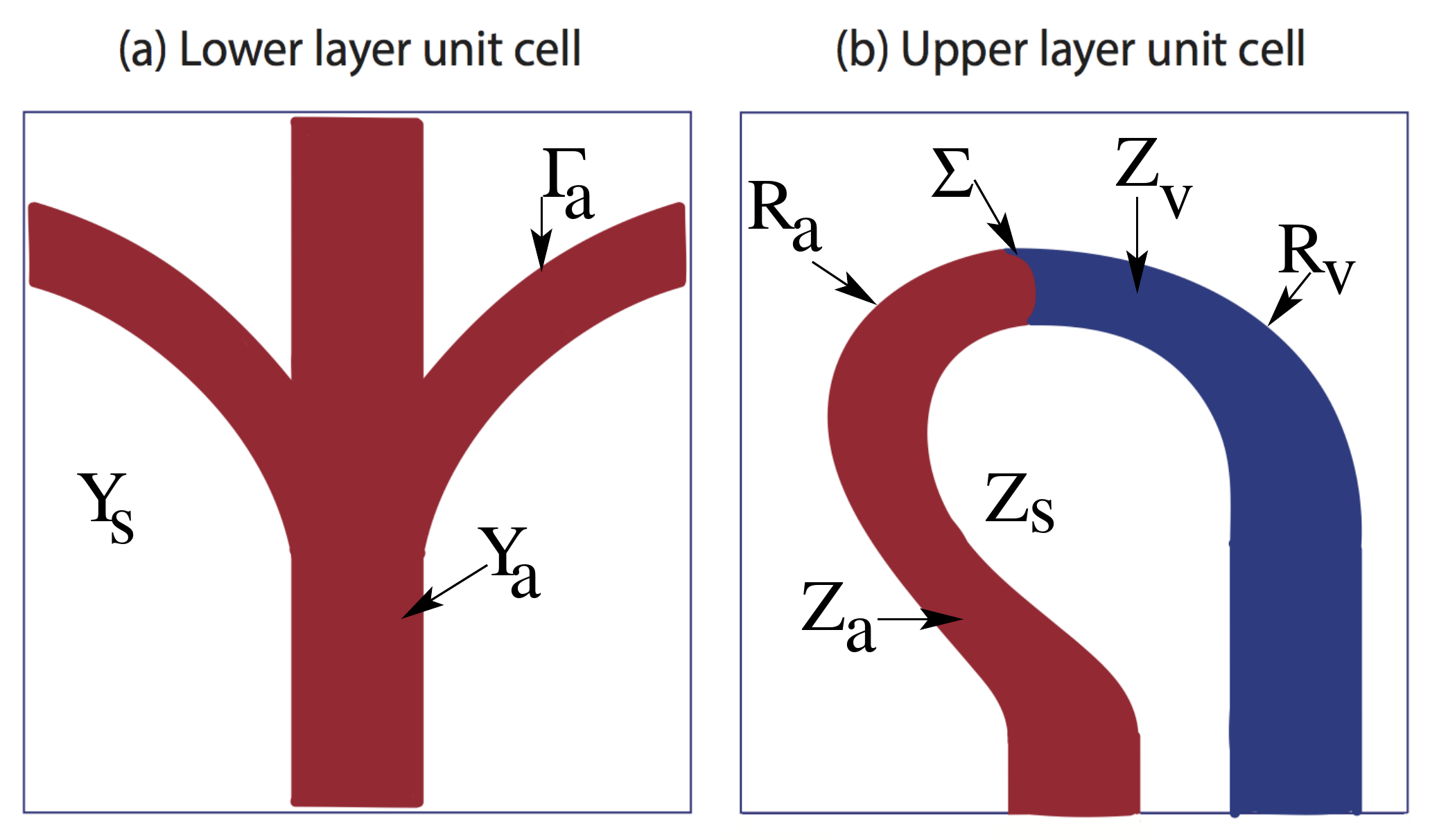}
\end{center}
\caption{Two-dimensional schematic representation of the two distinct, three-dimensional unit-cell geometries used in the microscopic model: (a) unit-cell geometry corresponding to the lower layer, i.e. the fat tissue layer; (b) unit-cell geometry corresponding to the upper layer, which represents the dermic and epidermic layers of the skin. Only the arterial blood vessels are shown in the fat tissue layer.}\label{fig2}
\end{figure}

The paper is organized as follows. In section 2, we collect the main results of the paper. In section 3, we formulate the microscopic model to be analyzed in the remainder of the paper, initially under the assumption that the depth of the top (skin) layer has the same length scale $\ve$ as the unit cell of the fat tissue layer. In section 4, we define the notion of weak solution used in the paper, and in section 5 we  provide {\it a priori} estimates for the solutions of the microscopic model and prove convergence results for the unfolding operator for functions defined in thin domains. These estimates are used in combination with an unfolding operator approach \cite{Cioranescu:2012,Cioranescu:2008_1,Cioranescu:2008} to prove the convergence of the solutions of the microscopic equations as $\ve\rightarrow 0$. In sections 6 and 7 we derive the homogenized, macroscopic equations for the blood velocity fields (in arteries and veins) and the oxygen concentrations (in arteries, veins, and tissue), respectively. Finally, in section 8, we relax some of the scaling assumptions of the previous sections, and we assume that the depth of the top (skin) layer is of a different length scale than the unit cell of the fat tissue layer.

\section{Formulation of the main results}

In this section, we collect the main results of the paper. The notation used is further explained in section \ref{mathmodel}. As discussed in the introduction, we are mainly concerned with the derivation of macroscopic equations for oxygen transport in a two-layer tissue architecture using different scaling assumptions for the distinct layers. The microscopic geometry that leads to the macroscopic models of this section is discussed in sections \ref{mathmodel} and \ref{section7}.

\subsection{Macroscopic coefficients  and  unit cell problems}\label{sec2-1}
First, we formulate the macroscopic coefficients and the unit cell problems that will be obtained in the derivation of the macroscopic equations.  We differentiate between two cases which correspond to skin tissue layers of different relative thicknesses (see section \ref{mathmodel} for an explanation of the terms involved). 

\subsection*{Case 1} If the thickness of the skin layer (see Fig. \ref{fig1}) is of the same order as the microscopic structure, then the macroscopic permeability matrices $\mathcal K_l$ and   $\hat{\mathcal K}$ for the blood flow  are defined by
\begin{align}\label{permeable}
\begin{aligned}
& \mathcal K_l^{ji}= \frac 1 {|Y|}\int_{Y_l} \omega_{l,j}^i (y) \, dy, \qquad & \hat{\mathcal K}^{jm}= \frac 1 {|\hat Z|}\int_{Z_{av}} \hat \omega_{j}^m(y)\, dy ,
\end{aligned}
\end{align}
where $\omega_{l}^i$ and  $\hat \omega^m$  are solutions of the unit cell problems 
\begin{equation}\label{eq:omega1}
\begin{cases}
-\mu \Delta_y \omega^i_l + \nabla_y \pi^i_l = \textbf{e}_i ,\quad \dv_y\, \omega^i_l =0   &\text{ in } Y_l, \quad i=1, \ldots, n,  \; \; l=a,v,   \\
\omega^i_l =0  \qquad  \text{ on } \Gamma_l, \quad  & \omega_l^i,\;   \pi_l^i \quad Y_l-\text{periodic},
\end{cases}
\end{equation}
 and 
\begin{equation}\label{eq:omega2}
\begin{cases}
-\mu \Delta_y \hat \omega^m + \nabla_y \hat \pi^m = \textbf{e}_m, \;   & 
\dv_y\, \hat \omega^m =0 \hspace{ 0.5 cm }  \text{ in }\, \,  Z_{av}, \; \;  m=1, \ldots, n-1,  \\
 (2 \mu S_y \hat \omega^m - \hat \pi^m I)\n\times \n=0, \quad & 
\hat\omega^m\cdot \n =0  \hspace{ 0.7 cm } \text{ on }  \, \,    \hat Z_{av}^0, \\
\hat \omega^m = 0 \quad \text{ on } \, \,  R_{av}\cup \hat Z_{av}^1,  &
    \hat\omega^m, \; \hat \pi^m \quad \hat Z-\text{periodic}. 
\end{cases}
\end{equation}
The macroscopic diffusion coefficients $\mathcal A_l$ and $ \mathcal{\hat A}_m$ in the limit equations for the oxygen concentration  are given by
\begin{equation}\label{macro_vv} 
\begin{aligned}
 &  \mathcal A_{l}^{ij}= \frac 1 {|Y|}\int_{Y_l} \Big[ D_l^{ij}(y) +  \sum_{k=1}^n D_l^{ik}(y)\frac{ \partial w_l^j }{\partial y_k} \Big] dy,   \\
& \hat{\mathcal A}_m^{ij}=  \frac 1 {|\hat Z|}\int_{Z_{m}} \Big[ \hat D^{ij}_m(y) +  \sum_{k=1}^n \hat D^{ik}_m(y) \frac{\partial \hat w^j_m}{\partial y_k} \Big] dy,  
\end{aligned}
 \end{equation}
 where $l=a,v,s$ and $\;m=av, s$. The functions  $w_l$ and $\hat w_m$  are solutions of the unit cell problems
 \begin{equation}\label{UnitCell_Ox_1}
\begin{cases}
-\dv_y( D_l(y) (\nabla_y w_l^j + \textbf{e}_j)) =0 & \, \text{in } Y_l,   \quad \text{ for } l=a,v,s,  \;  \; j=1, \ldots, n,  \\
D_l(y)( \nabla_y w_l^j + \textbf{e}_j)\cdot \n =0  & \, \text{on } \Gamma_l,  \quad  
w_l^j \quad  \, Y-\text{periodic}
\end{cases}
\end{equation}
and
 \begin{eqnarray}\label{unit_hat_1}
\begin{cases}
-\dv_y (\hat D_m(y) (\nabla_y \hat w^j_m +\textbf{e}_j)) = 0 &  \text{ in }  Z_{m},   \\
\hat D_m(y) (\nabla_y\hat w^j_m+ \textbf{e}_j) \cdot \n = 0 &  \text{ on } R_{av},  \text{ on }  \hat Z^0_{m} \cup \hat Z^1_{m} ,  \\
 \hat w_m^j  \quad \qquad \qquad   \hat Z-\text{periodic}, &\text{ for } m=av,s \; \text{ and } \; j=1, \ldots, n-1.
 \end{cases}
\end{eqnarray} 

\subsection*{Case 2} If the thickness of the skin layer is (a) considerably larger than the characteristic size of the microscopic  structure and (b) significantly smaller than the thickness of  the fat tissue layer, then, in the fat tissue layer,  the macroscopic permeability tensors $ \mathcal K_l$, $l=a,v$, and the macroscopic diffusion coefficients  $\mathcal{A}_\alpha$,  $\alpha=a,v,s$, are identical  to those  defined in  \eqref{permeable} and \eqref{macro_vv}. However, different permeability and diffusion coefficients are obtained for the macroscopic equations describing the blood flow and oxygen transport in the skin layer. Specifically, we obtain
\begin{eqnarray}\label{macro_coef_delta}
   \widetilde {\mathcal K}^{ji} = \frac 1{|\widetilde Z|} \int_{\widetilde Z_{av}} \widetilde \omega_j^i (y) dy, \quad   {\widetilde {\mathcal A}}^{ij}_m= \frac 1 {|\widetilde Z|}\int_{\widetilde Z_m} \Big[\hat D^{ij}_{m}(y) + \sum_{k=1}^n \hat D^{ik}_{m}(y) \partial_{y_k} \widetilde w^j_m(y)\Big] dy,
 \end{eqnarray}
where   $m=av,s$, and  $\widetilde \omega^i$ and  $\widetilde w^j_m$ are solutions of the unit cell problems 
   \begin{equation}\label{unit_cell_7_2}
  \begin{cases}
  - \mu \Delta_y \widetilde \omega^i + \nabla_y \widetilde \pi^i = \textbf{e}_i,   \qquad  &   \dv_y \, \widetilde\omega^i =0  \qquad \text{ in } \quad \widetilde Z_{av}, \\
 \phantom{-} \widetilde \omega^i = 0  \qquad \qquad  \text{ on } \quad \widetilde  R_{av}, \qquad &
 \widetilde \omega^i, \,\widetilde \pi^i \qquad  \widetilde Z-\text{periodic}.
 \end{cases}
 \end{equation}
and
\begin{equation}\label{unit_cell_2_hat}
\begin{cases}
 -\dv_y(\hat D_{m}(y) ( \nabla_y \widetilde w^j_m + \textbf{e}_j))=0 \;  \qquad &\text{ in } \widetilde Z_{m},  \qquad m=av, s, \\
 \phantom{-}\, \hat D_{m}(y) (\nabla_y \widetilde w^j_m +  \textbf{e}_j)\cdot \n=0 \qquad \quad& \text{ on } \widetilde R_{av}, \qquad  
\widetilde w^j_m \quad \widetilde Z-\text{periodic} \; .
\end{cases}
\end{equation}

\subsection{Macroscopic equations for velocity fields and oxygen concentrations} 
Given the definitions of the macroscopic coefficients and the unit cell problems in section \ref{sec2-1}, we are now in a position to state the theorems that are proved in the remainder of the paper.  We start by defining the spaces
$$
\begin{aligned}
& H({\rm div}; \Omega) = \{ v \in L^2(\Omega)^n, \; {\rm div  }\, v \in L^2(\Omega) \}, \\
& W(\Omega) = \{ w \in H^1(\Omega) , \; w = 0 \; \text{ on } \;  \Gamma_D \}.
\end{aligned}
$$
\subsection*{Case 1}
The main results of the paper under the scaling assumptions of Case 1, as discussed in section \ref{sec2-1}, are theorems \ref{5-1} and  \ref{6-1}. These provide the macroscopic equations for the blood velocity fields (in arteries and veins) and oxygen concentrations (in arteries, veins, and tissue) respectively. The notation used in the statements of the theorems is introduced in section \ref{mathmodel}.
 
\begin{theorem}\label{5-1}   The sequence of solutions of the microscopic model \eqref{Stokes_Omega}--\eqref{Stokes_ContC}
converges to functions  $\v^0_l \in H({\rm div} ; \Omega)$,  $p_l- p_l^0 \in W(\Omega)$, $\hat \v^0_{av} \in L^2(\hat \Lambda)$, and $\hat p\in H^1(\hat \Lambda)$ that satisfy the macroscopic equations 
\begin{align}\label{macro_macro_1}
\begin{aligned}
& \v^0_l=- \mathcal K_l \nabla p_l,  && \qquad  &&  \dv \,(\mathcal K_l \nabla p_l) = 0 && \text{ in } \Omega , \\
&  p_l   =  \hat p &&  \text{ on } \hat \Lambda \;,\\
& p_l=p_l^0  &&   \text{ on } \Gamma_D,  \qquad &&  \mathcal K_l \nabla p_l \cdot \n = 0 && \text{ on } \partial \hat \Omega\times(-L,0), \\
\end{aligned}
\end{align}
where $l=a,v$, and 
\begin{align}\label{macro_macro_2}
\begin{aligned}
&\hat  \v^0_{av}= - 2\hat {\mathcal K} \nabla_{\hat x} \hat p, \qquad    2\dv_{\hat x} (\hat {\mathcal K} \nabla_{\hat x} \hat p ) =  \mathcal K_a \nabla p_a \cdot \n +\mathcal K_v \nabla p_v \cdot \n && \text{ in } \hat\Lambda, \\
& \hat {\mathcal K} \nabla_{\hat x} \hat p \cdot \n =0 && \text{ on } \partial \hat \Lambda.
\end{aligned}
\end{align}
\end{theorem}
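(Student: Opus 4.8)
The plan is to follow the standard unfolding/two-scale route for deriving Darcy's law from Stokes flow, adapted to the coupled fat-tissue / thin-skin geometry. First I would record the weak formulation of the microscopic Stokes system \eqref{Stokes_Omega}--\eqref{Stokes_ContC} and invoke the a priori energy estimates of Section~5, which (after the usual restriction-operator / pressure-extension argument familiar from Stokes homogenization) give bounds on the rescaled velocities $\v_l^\ve$ and pressures $p_l^\ve$ that are uniform in $\ve$, both in the fat tissue $\Omega$ and, after rescaling the transverse variable $x_n\mapsto x_n/\ve$, in the skin layer. Applying the unfolding operator and the thin-domain convergence results of Theorems~\ref{4-4} and \ref{4-8}, I would extract a subsequence along which $\mathcal{T}_\ve \v_l^\ve \rightharpoonup \v_l(x,y)$ weakly in $L^2(\Omega\times Y_l)$, while the pressures converge to limits $p_l$, $\hat p$ that are independent of the microscopic variable $y$; a separate (rescaled) unfolding in the skin layer produces $\hat\v_{av}(\hat x,y)$ and $\hat p(\hat x)$.

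Next I would identify the cell problems and the Darcy law in the fat tissue. Passing to the two-scale limit in the momentum balance against test functions of the form $\varphi(x)\,\psi(x/\ve)$ with $\psi$ being $Y_l$-periodic and divergence-free, the $y$-dependence separates and yields, for each fixed $x$, the Stokes cell problem \eqref{eq:omega1} with right-hand side $-\nabla_x p_l$; by linearity $\v_l(x,y) = -\sum_i \omega_l^i(y)\,\partial_{x_i} p_l(x)$. Averaging over $Y_l$ and using definition \eqref{permeable} gives $\v^0_l := \langle \v_l\rangle = -\mathcal K_l \nabla p_l$, i.e. Darcy's law, and the limit of the incompressibility constraint $\dv_y \v_l = 0$ together with $\dv_x \langle \v_l\rangle = 0$ yields $\dv(\mathcal K_l\nabla p_l)=0$ in $\Omega$. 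The Dirichlet datum $p_l=p_l^0$ on $\Gamma_D$ and the no-flux condition on $\partial\hat\Omega\times(-L,0)$ are inherited from the microscopic boundary conditions in the limit.

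For the skin layer I would carry out the analogous argument on the rescaled unit cell $\hat Z$, where the mixed slip/no-slip conditions on $\hat Z_{av}^0$ and on $R_{av}\cup\hat Z_{av}^1$ produce the cell problem \eqref{eq:omega2} and hence $\hat{\mathcal K}$; the collapse of the $\ve$-thin layer reduces the effective equation to the two-dimensional interface $\hat\Lambda$, giving the flat Darcy law $\hat\v^0_{av} = -2\hat{\mathcal K}\nabla_{\hat x}\hat p$ (the factor $2$ and the restriction to tangential derivatives arising from the thin-domain rescaling and the $\hat Z$-averaging). The crucial coupling is obtained by testing the global weak formulation with functions that do not vanish across the skin/fat interface: continuity of the trace forces $p_l = \hat p$ on $\hat\Lambda$, and a flux-balance argument — equating the limit of the normal blood flux leaving $\Omega$ with the net transverse source in the collapsed skin equation — produces $2\dv_{\hat x}(\hat{\mathcal K}\nabla_{\hat x}\hat p) = \mathcal K_a\nabla p_a\cdot\n + \mathcal K_v\nabla p_v\cdot\n$, with the no-flux condition on $\partial\hat\Lambda$ following from the periodicity and boundary data.

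The main obstacle, and the genuinely nonstandard part, is the matching between the fixed-size domain $\Omega$ and the $\ve$-thin skin layer under the chosen scaling of the transmission conditions: one must show that the rescaled traces of the fat-tissue solutions on the moving interface converge to traces of the limit on $\hat\Lambda$, that the normal fluxes pass to the limit in the correct (dual) topology so as to appear as the source term, and that the thin-layer pressure indeed loses both its $y$- and its transverse-$x_n$-dependence in the limit. Establishing these compactness and trace-convergence statements — for which Theorems~\ref{4-4} and \ref{4-8} are tailored — is where the real work lies; the separation of scales and the cell-problem identification in the bulk are then routine.
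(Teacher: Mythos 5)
Your overall route coincides with the paper's own: a priori estimates with the restriction-operator/pressure-extension argument, two-scale and unfolding limits showing that the limit pressures lose their $y$-dependence (and, in the skin layer, their $x_n$-dependence), identification of the cell problems \eqref{eq:omega1}--\eqref{eq:omega2}, Darcy's law by averaging over the unit cells, and the coupling on $\hat\Lambda$ via the flux relation $\dv_{\hat x}\langle\hat\v_{av}, 1\rangle_{Z_{av}} = \langle\v_a,1\rangle_{Y_a}\cdot\n + \langle\v_v,1\rangle_{Y_v}\cdot\n$, which the paper derives in \eqref{bc_hat_Lambda} from the divergence-free constraint together with the transmission condition $\v_l^\ve = \ve^{-1}\hat\v_l^\ve$ on $\hat\Lambda$. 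Your closing paragraph also correctly locates the nonstandard difficulty in the matching between the fixed domain and the $\ve$-thin layer.

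There is, however, a genuine gap in your treatment of the factor $2$ and of the pressure matching. You attribute the factor $2$ to ``the thin-domain rescaling and the $\hat Z$-averaging'' and claim that $p_l = \hat p$ follows from ``continuity of the trace''. Neither is how it works, and your explanation would wrongly predict a factor $2$ in Case 2 as well, whereas Theorem \ref{th:macro_delta} has none (the paper explicitly remarks that this factor is specific to Case 1). In the paper, the two-scale momentum equation in the skin layer is $-\mu\Delta_y\hat\v_{av} + \nabla_{\hat x}\tilde p + \nabla_y\hat p_{av}^1 = 0$ with an a priori unidentified limit pressure $\tilde p$; the identities $p_a = p_v = \hat p$ and, crucially, $\tilde p = 2\hat p$ on $\hat\Lambda$ are then extracted by testing the coupled weak formulation \eqref{micro_weak_flow} with divergence-free test functions $(\psi_a,\psi_v,\hat\psi)$ linked across the interface (so that $\dv_{\hat x}\langle\hat\psi,1\rangle_{Z_{av}}$ equals the sum of the two bulk normal traces) and comparing with \eqref{macro_veloc_1} and \eqref{bc_hat_Lambda}. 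The factor $2$ appears precisely because the single connected skin-layer flow is driven simultaneously by the arterial and the venous bulk pressures, both of which equal $\hat p$ on the interface. Note also that the limit pressures are only $L^2$ two-scale limits, so ``trace continuity'' is not available a priori; it must come out of this variational argument (aided by the vanishing of the $O(\ve^2)$ viscous terms in the stress transmission condition). Without this step your sketch yields the skin-layer Darcy law with an unidentified driving pressure, and \eqref{macro_macro_2} as stated --- with the factor $2$ and with the same $\hat p$ appearing in the condition $p_l = \hat p$ --- is not established. A related point: the slip condition $(2\mu S_y\hat\v_{av} - \hat p_{av}^1 I)\,\n\times\n = 0$ on $\hat\Lambda\times\hat Z_{av}^0$, which fixes the cell problem \eqref{eq:omega2}, is itself derived in the paper by relaxing the admissible test functions; it is not part of the microscopic data and cannot simply be asserted.
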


\begin{theorem}\label{6-1}
The sequence of solutions of the microscopic model \eqref{Diff_AV}--\eqref{Diff_InitC} converges to a solution of 
the macroscopic equations 
\begin{equation}\label{macro_cl}
\begin{aligned}
& \theta_l \partial_t c_l- \dv(\mathcal A_l  \nabla c_l - \v_l^0 c_l) = \;  \lambda_l \gamma_l  (c_s-c_l) && \text{ in } \Omega_T, \\
&\theta_s \partial_t c_s- \dv(\mathcal A_s   \nabla c_s  ) =\sum_{l=a,v}  \lambda_l \gamma_l (c_l-c_s)- \theta_s \ddashinttt_{Y_s}  d_s \, dy \, c_s &&
 \text{ in } \Omega_T, \\
&c_l(t,\hat x,0)  =\;  \hat c(t,\hat x), \qquad \qquad  c_s(t,\hat x,0)  = \hat c_s(t,\hat x) &&  \text{ on } \hat \Lambda_T, \\
& (\mathcal A_l \nabla c_l - \v_l^0 c_l)\cdot \n=\;  0 \hspace{ 1.6 cm }   \text{  on }\;  (\partial \Omega \setminus (\hat\Lambda\cup \Gamma_D))\times (0,T), \\
 &c_l (t,x) = \;  c_{l,D}(t,x)  &&   \text{ on } \Gamma_{D,T}, \\
 & \mathcal A_s \nabla c_s \cdot \n = 0 \hspace{3 cm }  \text{  on } \; (\partial \Omega \setminus \hat\Lambda)\times (0,T), \\
 &c_l(0,x) =\;   c_l^0(x), \qquad \qquad  c_s(0,x) =  c_s^0(x)   &&  \text{ in } \Omega, 
\end{aligned}
\end{equation}
where $\theta_l = |Y_l|/|Y|$, $\gamma_l=|\Gamma_l|/|Y|$,  $l=a, v$,  and  $\theta_s = |Y_s|/ |Y|$.  Moreover, in the domain  
 $\hat \Lambda_T$, we have 
\begin{eqnarray}\label{macro_hat_c}
\begin{aligned}
 &\hat \theta_{av}  \partial_t \hat c-   \dv_{\hat x}(  \mathcal{\hat A}_{av} \nabla_{\hat x} \hat c - \hat \v^0_{av} \, \hat c) = 
\mathcal R_{av} (\hat c_s- \hat c)-\sum_{l=a,v}  (\mathcal A_l \nabla c_l-\v_l^0 c_l)\cdot \n,   \\
 & \hat \theta_s \partial_t \hat c_s - \dv_{\hat x}(  \mathcal{\hat A}_s  \nabla_{\hat x} \hat c_s) = 
\mathcal R_{av} (\hat c- \hat c_s)-  \mathcal A_s  \nabla c_s\cdot \n - \hat \theta_s \ddashinttt_{Z_s}  \hat d_s\,  dy \,  \hat c_s, \; \; \\
& (\mathcal{\hat A}_{av} \nabla_{\hat x} \hat c - \hat\v^0_{av} \hat c)\cdot \n =0   \quad \text{ on } \; (0,T)\times \partial \hat \Lambda, \quad \;\;  \hat c(0, \hat x) = \hat c^0(\hat x) \quad \;   \text{ in }\;  \hat \Lambda, \\
& \mathcal{\hat A}_s \nabla_{\hat x} \hat c_s\cdot \n =0 \qquad \qquad\quad \text{ on } \;    (0,T)\times\partial \hat \Lambda,  \quad\; \;
 \hat c_s(0, \hat x) = \hat c^0_s(\hat x)    \quad  \text{ in } \;  \hat \Lambda, 
 \end{aligned}
\end{eqnarray}
where $\hat \theta_{av}= |Z_{av}|/|\hat Z|$, $\hat \theta_s = |Z_s|/|\hat Z|$,  and $\mathcal R_{av} = \hat \lambda_a |R_a|/|\hat Z| +  \hat \lambda_v |R_v|/|\hat Z|$.
The macroscopic  transport velocities ${ \v}^0_l$, $\hat \v^0_{av}$ are given by 
\begin{equation}\label{macro_veloc_ve}
{ \v}^0_l(x) = \frac 1 {|Y|} \int_{Y_l} \v_l (x,y) dy, \quad {\hat  \v}^0_{av}(\hat x) = \frac 1 {|\hat Z|}  \int_{Z_{av}} \hat \v_{av} (\hat x,y)  dy, \quad l = a,v. 
\end{equation} 
The solutions of equations \eqref{macro_cl}--\eqref{macro_hat_c}  satisfy  $c_l-c_{l,D} \in L^2(0,T; W(\Omega))\cap H^1(0,T; L^2(\Omega))$ for $l=a,v$, and  $c_s \in L^2(0,T; H^1(\Omega))\cap H^1(0,T; L^2(\Omega))$. Moroever,  $\hat c, \hat c_s \in L^2(0,T; H^1(\hat \Lambda))\cap H^1(0,T; L^2(\hat \Lambda))$. Finally, $\hat c$, $\hat c_s \in  L^\infty(\hat \Lambda_T)$ and  $c_l \in L^\infty(\Omega_T)$ for $l=a,v,s$.
\end{theorem}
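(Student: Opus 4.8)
The plan is to follow the same two-scale/unfolding strategy used for the velocity fields in Theorem \ref{5-1}, now applied to the parabolic system for the oxygen concentrations. First I would derive uniform (in $\ve$) a priori bounds for the microscopic concentrations $c_l^\ve$ ($l=a,v,s$) and for the thin-layer concentrations $\hat c^\ve,\hat c_s^\ve$. The natural energy estimate is obtained by testing the weak formulation of \eqref{Diff_AV}--\eqref{Diff_InitC} with the solution itself (after subtracting the Dirichlet data $c_{l,D}$). The coupling terms live on the vessel surfaces $\Gamma_l^\ve$; under the scaling adopted in the paper these surface integrals are controlled, via the trace and extension estimates for perforated and thin domains from Section 5, by the $H^1$ norms of the solution. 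Together with the parabolic structure this yields uniform bounds for $c_l^\ve$ in $L^2(0,T;H^1)$ and for $\partial_t c_l^\ve$ in $L^2(0,T;(H^1)')$ over $\Omega_T$, and the corresponding rescaled bounds in the skin layer through Theorems \ref{4-4} and \ref{4-8}.

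Second, using the unfolding operator I would extract (for a subsequence) the two-scale limits: an Aubin--Lions argument gives strong $L^2$ convergence $c_l^\ve \to c_l$, while the unfolded gradients converge weakly to $\nabla_x c_l + \nabla_y w_l$, with the corrector $w_l$ linear in $\nabla_x c_l$ and governed by the cell problem \eqref{UnitCell_Ox_1}. Passing to the limit in the unfolded weak formulation, the gradient terms collapse to the effective diffusion matrices $\mathcal A_l$ of \eqref{macro_vv}; the surface reaction integrals converge to the volume reaction terms $\lambda_l\gamma_l(c_s-c_l)$ with $\gamma_l=|\Gamma_l|/|Y|$; and the convection term passes to $\v_l^0 c_l$, since the strong convergence of $c_l^\ve$ against the two-scale limit of the velocity produces exactly the cell-averaged transport velocity $\v_l^0$ of \eqref{macro_veloc_ve}. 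The time-derivative term picks up the porosity factor $\theta_l=|Y_l|/|Y|$. This delivers the bulk equations \eqref{macro_cl}.

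The main obstacle is the coupling across the interface between the fixed-size fat layer $\Omega$ and the $\ve$-thin skin layer $\hat\Lambda$. After the transversal $\ve$-rescaling, the unfolded skin-layer functions lose their dependence on the thin direction, so the limits $\hat c,\hat c_s$ depend only on the tangential variable $\hat x$, and their two-scale gradients reconstruct the reduced cell problems \eqref{unit_hat_1}, giving the matrices $\hat{\mathcal A}_m$. I would then choose oscillating test functions whose traces match across the interface, so that the normal flux $(\mathcal A_l\nabla c_l-\v_l^0 c_l)\cdot\n$ appears simultaneously as a boundary term in the $\Omega$-equation and as the source term coupling into the reduced equations \eqref{macro_hat_c}; the transmission condition $c_l(t,\hat x,0)=\hat c(t,\hat x)$ emerges from matching the traces of the two-scale limits on $\hat\Lambda$. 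The delicate point is that the thin-layer reaction and transport terms, after rescaling, survive in the limit precisely as the coupling source between the two regions, and this is where the thin-domain unfolding results of Theorems \ref{4-4} and \ref{4-8} are indispensable.

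Finally I would establish the stated $L^\infty$ bounds by a Stampacchia truncation (maximum-principle) argument applied uniformly in $\ve$ and then pass to the limit, and I would prove uniqueness of the limit system \eqref{macro_cl}--\eqref{macro_hat_c} by an energy estimate on the difference of two solutions. Here the sign structure of the reaction terms, together with the matched interface flux, lets the coupling contributions be absorbed, and Gronwall's inequality closes the estimate. Uniqueness upgrades the subsequential convergence to convergence of the full sequence, which completes the proof.
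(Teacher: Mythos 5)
Your plan follows the same overall route as the paper: a priori estimates, two-scale/unfolded limits obtained with oscillating test functions of the form $\phi^1 + \ve\,\phi^2(\cdot,\cdot/\ve)$, a corrector ansatz reducing to the cell problems, matched test functions across $\hat\Lambda$ to produce the normal-flux coupling terms in \eqref{macro_hat_c}, truncation for the $L^\infty$ bounds, and uniqueness by an energy argument. There is, however, a concrete gap in your compactness step. You propose to bound $\partial_t c_l^\ve$ only in $L^2(0,T;(H^1)')$, obtain strong convergence by Aubin--Lions, and then apply Theorems \ref{4-4} and \ref{4-8} in the skin layer. But those theorems (and Theorem \ref{th_22}) take as hypothesis the \emph{strong} bound $\ve^{-1}\|\partial_t w^\ve\|^p_{L^p((0,T)\times\Lambda^\ve_l)}\le C$; a dual-norm estimate does not verify it. Moreover, Aubin--Lions cannot be invoked directly in the $\ve$-thin perforated layer $\Lambda^\ve_l$ (there is no fixed reference space, and extension/trace constants degenerate with $\ve$), and the regularity asserted in the theorem, $c_l\in H^1(0,T;L^2(\Omega))$ and $\hat c,\hat c_s\in H^1(0,T;L^2(\hat\Lambda))$, would not follow from dual bounds either. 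The paper closes exactly this point in Lemma \ref{Lemma:aprior}: the equations \eqref{Diff_AV}--\eqref{Diff_Tis} are differentiated in time and tested with $\partial_t(c^\ve_l-c_{l,D})$ and $\partial_t\hat c^\ve_l$ --- this is where the $H^2$ regularity and compatibility conditions on the initial data in Assumption \ref{assumption}(iii) are used --- yielding $\|\partial_t c^\ve_l\|_{L^\infty(0,T;L^2)}+\|\partial_t\nabla c^\ve_l\|_{L^2}\le C$ together with the $\ve$-scaled analogues in $\Lambda^\ve_l$. Your proof needs these stronger estimates (or a substitute) before the thin-layer unfolding machinery you call indispensable can actually be applied.

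A second, smaller omission: you assert that the corrector is governed by the purely diffusive cell problem \eqref{UnitCell_Ox_1}, but you never argue why the velocity field drops out of the corrector equation; generically a convection term of this size would enter the cell problem and alter $\mathcal A_l$. In the paper this is a separate step: after setting $\phi^1_l=0$, the term $\langle \v_l c_l, \nabla_y\phi^2_l\rangle_{\Omega_T\times Y_l}$ is integrated by parts in $y$ and vanishes because $\dv_y\v_l=0$, $\v_l=0$ on $\Gamma_l$, $c_l$ is independent of $y$, and $\v_l$ is $Y$-periodic (in the skin layer one uses in addition $\hat\v_{av}\cdot\n=0$ on $\hat Z^0_{av}$ and $\hat\v_{av}=0$ on $R_{av}\cup\hat Z^1_{av}$). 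Only after this cancellation does convection reappear at the macroscopic level with the averaged velocities \eqref{macro_veloc_ve}. You should make this explicit; without it the identification of $\mathcal A_l$ and $\hat{\mathcal A}_{av}$ via \eqref{UnitCell_Ox_1} and \eqref{unit_hat_1} is unjustified.
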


\subsection*{Case 2}
If we consider the scaling assumptions of Case 2, then we have to introduce two parameters: a parameter  $\ve>0$ that characterizes the length scale of the microstructure and a parameter $\delta>0$ that represents the thickness of the skin tissue layer. 

We  first derive a system of ``intermediate" equations by letting $\ve\rightarrow 0 $ while keeping $\delta$ fixed, as follows.   
\begin{theorem}\label{th:macro_ve}
As $\ve \to 0$  the sequence of  solutions of the microscopic model  given by \eqref{Stokes_Omega}, \eqref{Stokes_BC_O}, \eqref{Stokes_ContC}, and \eqref{Stokes_Lambda_delta}--\eqref{Stokes_BC_L_delta} converges to functions $\overline \v^\delta_{l} \in H({\rm div}; \Omega)$, $p^\delta_l- p_l^0 \in W(\Omega)$, $\widetilde \v^\delta_{av} \in H({\rm div}; \Lambda_\delta)$,  and $\hat p^\delta \in H^1(\Lambda_\delta)$, respectively, with $l=a,v$, that satisfy the macroscopic model 
\begin{equation}\label{macro_ve_1}
\begin{aligned}
&\overline \v^\delta_{l} = - \mathcal K_l \nabla p_l^\delta, &&   \dv (\mathcal K_l \nabla p_l^\delta) = 0 \quad  && \text{ in }\;  \Omega, \\
& \widetilde \v^\delta_{av} =  - \widetilde {\mathcal K} \nabla \hat p^\delta, &&   \dv ( \widetilde {\mathcal K} \nabla \hat p^\delta) = 0  &&  \text{ in }\;  \Lambda_\delta,    \\
&   \mathcal K_v \nabla p_v^\delta \cdot \n+ \mathcal K_a \nabla p_a^\delta \cdot \n =\displaystyle\frac{1}{\delta} \widetilde {\mathcal K}  \nabla   \hat p^\delta \cdot \n,   && \qquad  p_l^\delta =  \hat p^\delta &&  \text{ on } \; \hat \Lambda \; ,\\
& \mathcal  K_l \nabla p_l^\delta\cdot \n =0 \quad   \text{ on }\;  \partial \Omega \setminus(\Gamma_D \cup \hat \Lambda) \; , && \qquad  p^\delta_l = p^0_l  &&   \text{ on } \;  \Gamma_D, \\
&\widetilde {\mathcal K}  \nabla  \hat p^\delta\cdot \n =0  \, \quad    \text{ on } \; \partial  \Lambda_\delta \setminus \hat \Lambda.
\end{aligned}
\end{equation}
\end{theorem}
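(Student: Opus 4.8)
The plan is to homogenize the microscopic Stokes system as $\ve\to 0$ with the skin thickness $\delta$ held fixed. Because $\delta$ does not shrink, both layers $\Omega$ and $\Lambda_\delta$ remain fixed-size domains filled with many cells of size $\ve$ (of type $Y_l$ in $\Omega$ and of type $\widetilde Z$ in $\Lambda_\delta$), and the two subproblems are coupled only across the interface $\hat\Lambda$. Consequently the thin-domain convergence results of Theorems \ref{4-4} and \ref{4-8} are not needed here; a standard periodic-unfolding Stokes-to-Darcy argument applies in each layer separately, and the entire difficulty is concentrated at the interface. Throughout I would work with the weak formulation of \eqref{Stokes_Omega}, \eqref{Stokes_BC_O}, \eqref{Stokes_ContC}, \eqref{Stokes_Lambda_delta}--\eqref{Stokes_BC_L_delta}.

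First I would derive a priori bounds on $\v_l^\ve$, $\widetilde\v_{av}^\ve$, $p_l^\ve$ and $\hat p^\ve$ that are uniform in $\ve$, with constants allowed to depend on the fixed $\delta$. Testing the momentum balance with the velocity itself and using coercivity of the viscous term, the Poincar\'e inequality in the perforated layers (valid with an $\ve$-independent constant thanks to the $\ve Y_l$- and $\ve\widetilde Z$-periodicity of the vessel geometry), and the $\ve$-scaling of the viscous term in \eqref{Stokes_Omega} that yields the Darcy regime gives the velocity estimates. The pressure estimates are the delicate part: I would construct, via a Bogovskii/Tartar restriction operator on each perforated layer, a right inverse of the divergence whose operator norm is controlled uniformly in $\ve$, and test the momentum balance with its image to bound $p_l^\ve$ and $\hat p^\ve$ in $L^2$ modulo constants.

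Next I would extend the velocities by zero into the vessel-free region and the pressures by the restriction operator, and apply the unfolding operator associated with $Y_l$ in $\Omega$ and with $\widetilde Z_{av}$ in $\Lambda_\delta$. The uniform estimates yield two-scale limits $\v_l^\ve\rightharpoonup\v_l^0(x,y)$ and $p_l^\ve\to p_l^\delta(x)$ with the pressure limit independent of $y$, and analogously $\widetilde\v_{av}^\ve\rightharpoonup\widetilde\v_{av}^0(x,y)$, $\hat p^\ve\to\hat p^\delta(x)$. Passing to the limit in the weak formulation with oscillating test fields $\varphi(x)\,w(x/\ve)$, with $w$ ranging over divergence-free cell fields, decouples the scales and identifies the cell problems \eqref{eq:omega1} in $\Omega$ and \eqref{unit_cell_7_2} in $\Lambda_\delta$; writing $\v_l^0(x,y)=-\sum_i \omega_l^i(y)\,\partial_{x_i}p_l^\delta(x)$ and averaging over the cell produces Darcy's law $\overline\v_l^\delta=-\mathcal K_l\nabla p_l^\delta$ with $\mathcal K_l$ as in \eqref{permeable}, and likewise $\widetilde\v_{av}^\delta=-\widetilde{\mathcal K}\nabla\hat p^\delta$ with $\widetilde{\mathcal K}$ as in \eqref{macro_coef_delta}. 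Testing the limiting incompressibility constraint against $y$-independent functions yields $\dv(\mathcal K_l\nabla p_l^\delta)=0$ in $\Omega$ and $\dv(\widetilde{\mathcal K}\nabla\hat p^\delta)=0$ in $\Lambda_\delta$, and the same class of test functions recovers the no-flux conditions on the exterior boundaries and $p_l^\delta=p_l^0$ on $\Gamma_D$; the stated regularity ($\overline\v_l^\delta\in H({\rm div};\Omega)$, $p_l^\delta-p_l^0\in W(\Omega)$, and the analogues on $\Lambda_\delta$) is then immediate from the Darcy structure.

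Finally I would establish the interface conditions on $\hat\Lambda$. Continuity of pressure $p_l^\delta=\hat p^\delta$ follows from the convergence of the traces of $p_l^\ve$ and $\hat p^\ve$ together with the pressure matching prescribed in the microscopic transmission condition \eqref{Stokes_ContC}. The flux balance $\mathcal K_a\nabla p_a^\delta\cdot\n+\mathcal K_v\nabla p_v^\delta\cdot\n=\tfrac{1}{\delta}\,\widetilde{\mathcal K}\nabla\hat p^\delta\cdot\n$ I would obtain by choosing test functions supported across $\hat\Lambda$, integrating by parts in each layer, and passing to the limit, the factor $1/\delta$ being inherited from the scaling of the microscopic transmission conditions that equates the normal vessel fluxes on the two sides of the $\delta$-thick skin. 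I expect this last step, together with the $\ve$-uniform pressure estimate near $\hat\Lambda$, to be the main obstacle: the two layers carry incompatible cell geometries ($Y_l$ versus $\widetilde Z_{av}$) that do not match along the interface, so both the construction of the divergence right-inverse and the justification of the limit of the interface flux demand careful treatment of the boundary cells, and the $\delta$-dependence must be tracked precisely so that the resulting scaling is exactly the stated $1/\delta$. Uniqueness for the limiting Darcy system \eqref{macro_ve_1} then upgrades the subsequential convergence to convergence of the whole family.
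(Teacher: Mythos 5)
Your proposal follows essentially the same route as the paper's proof: velocity bounds plus restriction-operator (Tartar/Allaire-type) pressure extensions, two-scale/unfolding limits taken separately in the two fixed-size layers, divergence-free oscillating test functions to decouple the scales and identify the cell problems \eqref{eq:omega1} and \eqref{unit_cell_7_2}, the Darcy laws via the separation-of-variables ansatz, and the interface flux balance with its $1/\delta$ factor extracted from the velocity transmission condition $\hat\v_l^\ve=\delta\,\v_l^\ve$ on $\hat\Lambda$. One minor imprecision: the microscopic pressures are only $L^2$ functions and so have no traces; the paper instead obtains $p_l^\delta=\hat p^\delta$ on $\hat\Lambda$ (and the flux condition) by passing to the limit in the weak formulation with test functions coupled across the interface, and note that the relevant interface condition is \eqref{Stokes_TransC_delta}, not \eqref{Stokes_ContC}, which concerns the arterial-venous connections $\Sigma^\ve$.
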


\begin{theorem}\label{thm7-2}
As $\ve \to 0$ the sequence of solutions of the microscopic equations   \eqref{Diff_AV}--\eqref{Diff_InitC} with $\delta$ instead of $\ve$ in the transmission conditions  converges to functions
$c_l^\delta- c_{l,D} \in L^2(0,T; W(\Omega))$,  $c_s^\delta \in L^2(0,T; H^1(\Omega))$, $c^\delta_l \in H^1(0,T; L^2(\Omega))$,   and $\hat c^\delta_j \in L^2(0,T; H^1(\Lambda_{\delta}))\cap H^1(0,T; L^2(\Lambda_{\delta}))$  that satisfy the macroscopic problem 
\begin{equation}
\begin{aligned}\label{macro_av_ve}
&\theta_l \partial_t  c^\delta_l - \dv(\mathcal{A}_l \nabla c^\delta_l - {\overline \v}^{\delta}_l  c^\delta_l) =  \lambda_l \gamma_l(c^\delta_s-  c^\delta_l), \qquad  && \text{ in } \Omega_T, \\
&\widetilde \theta_{av} \partial_t \hat c^\delta_{av} - \dv(\mathcal{\widetilde A}_{av} \nabla \hat c^\delta_{av} - \widetilde{\v}^\delta_{av} \hat c^\delta_{av}) = \mathcal R_{av}(\hat c^\delta_s- \hat c^\delta_{av}), \qquad && \text{ in } \Lambda_{\delta,T},  \\
& c_l^\delta= \hat c^\delta_{av}, \qquad \sum_{l=a,v} (\mathcal{A}_l \nabla c_l^\delta - \overline{\v}^\delta_l  c_l^\delta)\cdot \n  = \displaystyle\frac{1}{\delta} (\mathcal{\widetilde A}_{av} \nabla \hat c^\delta_{av} - \widetilde {\v}_{av}^\delta  \hat c^\delta_{av})\cdot \n   && \text{ on } \hat \Lambda_T, \\
&  (\mathcal{A}_l \nabla c_l^\delta - {\overline \v}_l^\delta  c_l^\delta)\cdot \n  = 0  \hspace{2. cm }    \text{ on }(\partial \Omega\setminus  (\hat \Lambda\cup \Gamma_D)) \times (0,T), \\
& c_l^\delta = c_{l,D}  \hspace{4.3 cm }   \text{ on } \Gamma_{D} \times (0,T),  \\
&(\mathcal{\widetilde A}_{av} \nabla \hat c^\delta_{av} - \widetilde{\v}^\delta_{av} \hat c^\delta_{av}) \cdot \n = 0 \hspace{1.2 cm }  \text{ on } (\partial  \Lambda_\delta\setminus \hat \Lambda)\times(0,T), \\
&  c_l^\delta(0,x)= c^0_l(x) \qquad   \text{ in } \Omega,  \hspace{ 1.9 cm }    \hat c_{av}^{\delta}(0, x)= \hat c^{\delta,0}(x) && \text{ in } \Lambda_\delta,
 \end{aligned}
\end{equation}
where $l=a,v$ and $j=av, s$, and  
\begin{equation}\label{macro_s_ve}
\begin{aligned}
&\theta_s \partial_t  c^\delta_s - \dv(\mathcal{A}_s  \nabla c^\delta_s ) =\sum_{l=a,v}  \lambda_l \gamma_l(c^\delta_l- c^\delta_s)  - \theta_s \ddashinttt_{Y_s}  d_s dy\,  c_s^\delta &&  \text{ in } \Omega_T, \\
 &\widetilde \theta_s \partial_t \hat c^\delta_s - \dv(\mathcal{\widetilde A}_s  \nabla \hat c^\delta_s ) =  \mathcal R_{av}(\hat c^\delta_{av}- \hat c^\delta_s) - \widetilde \theta_s \ddashinttt_{\widetilde Z_s} \hat d_s  dy \,  \hat c_s^\delta &&  \text{ in } \Lambda_{\delta, T},  \\
& c^\delta_s = \hat c^\delta_s, \qquad \qquad 
  \mathcal{A}_s  \nabla c^\delta_s \cdot \n =\frac 1 \delta  \mathcal{\widetilde A}_s \nabla \hat c^\delta_s  \cdot \n  \; \; &&  \text{ on } \hat \Lambda_T,  \\
 &  \mathcal{A}_s \nabla c^\delta_s\cdot \n  = 0  \quad\;\;  \text{ on }(\partial \Omega\setminus  \hat \Lambda)\times(0,T), \quad  \quad \;\; c^\delta_s(0,x)= c^{0}_s(x)  && \text{ in } \Omega, \\
& \mathcal{\widetilde A}_s \nabla \hat c^\delta_s  \cdot \n = 0 \quad \;\;   \text{ on }  (\partial  \Lambda_\delta \setminus \hat \Lambda)\times(0,T), \quad \quad \hat c^\delta_s(0,x)= \hat c^{\delta, 0}_{s}(x)  && \text{ in }  \Lambda_\delta.
 \end{aligned}
\end{equation}
Here the following notation has been used:
\begin{eqnarray*} 
&&\widetilde \theta_m= \frac{|\widetilde{Z}_{m}|}{|\widetilde{Z}|},  \;  m=av, s, \;   \theta_l=\frac{ |Y_l|}{|Y|}, \; 
 \mathcal R_{av} =  \frac{\hat \lambda_v|\widetilde R_v| + \hat \lambda_a|\widetilde R_a|}{|\widetilde{Z}|}, \;  
\gamma_l =\frac{ |\Gamma_l|}{|Y|}, \;  l=a,v,s,
\end{eqnarray*}
and  the macroscopic transport velocities are defined as 
\begin{equation}\label{macro_veloc_delta}
\overline \v^\delta_{l} (x) = \frac 1 {|Y|} \int_{Y_l} \v^\delta_l (x,y) \, dy,  \quad 
  \widetilde \v^\delta_{av} (x) = \frac 1 {|\widetilde Z|} \int_{\widetilde Z_{av}} \hat \v^\delta_{av} (x,y) \, dy, \qquad l = a,v.
\end{equation}
\end{theorem}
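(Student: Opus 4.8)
The plan is to homogenize the coupled parabolic system \eqref{Diff_AV}--\eqref{Diff_InitC} (with $\delta$ in the transmission conditions) by the periodic unfolding method, following the scheme already used for Theorem \ref{6-1}. The essential simplification compared to Case~1 is that here $\delta$ is held fixed while $\ve\to 0$, so the skin layer $\Lambda_\delta$ remains a genuine three-dimensional domain in the limit and no dimension reduction occurs: the passage $\ve\to0$ is a two-subdomain periodic homogenization in the fixed domains $\Omega$ and $\Lambda_\delta$, coupled through the interface $\hat\Lambda$, and the factor $1/\delta$ in the flux transmission is simply a fixed constant.

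First I would derive a priori estimates uniform in $\ve$ (with constants allowed to depend on $\delta$). Testing the weak formulation with the solutions themselves, the convection terms are controlled using $\dv_y\,\v^\delta_l=0$ together with the no-flux conditions for the velocity, so that $\int \dv(\v_l^\ve c_l^\ve)\,c_l^\ve$ reduces to a boundary term that vanishes or is absorbed; crucially this needs no velocity bound beyond $L^2$-integrability. Combining this with the signs of the reaction couplings and of the consumption terms $-\theta_s\ddashinttt_{Y_s}d_s\,dy\,c_s^\delta$ and $-\widetilde\theta_s\ddashinttt_{\widetilde Z_s}\hat d_s\,dy\,\hat c_s^\delta$, and using the scaled trace inequalities from the unfolding framework to control the interface integrals on $\hat\Lambda$ and the surface-reaction integrals on $\Gamma_l^\ve$ and $R_{av}^\ve$, yields uniform bounds for $c_l^\ve,\hat c_j^\ve$ in $L^2(0,T;H^1)$ and for $\partial_t c_l^\ve,\partial_t\hat c_j^\ve$ in the corresponding dual spaces.

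Next I would extract limits. By these bounds and the unfolding convergence results (Theorems \ref{4-4} and \ref{4-8}), the unfolded concentrations $\mathcal{T}_\ve c_l^\ve,\ \mathcal{T}_\ve \hat c_j^\ve$ converge to $y$-independent limits $c_l,\hat c_j$, while the unfolded gradients converge to $\nabla_x c_l+\nabla_y\hat c_l^1$ with correctors $\hat c_l^1=\sum_j w_l^j\,\partial_{x_j}c_l$ in $\Omega$ and the analogous $\widetilde w_m^j$ in $\Lambda_\delta$, where $w_l^j,\widetilde w_m^j$ solve the cell problems \eqref{UnitCell_Ox_1} and \eqref{unit_cell_2_hat}. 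An Aubin--Lions argument then upgrades convergence to strong $c_l^\ve\to c_l$ in $L^2(\Omega_T)$ and $\hat c_j^\ve\to\hat c_j$ in $L^2(\Lambda_{\delta,T})$, which is precisely what is needed to pass to the limit in the bilinear convection terms: pairing the strongly convergent concentration against the two-scale-convergent velocity (whose two-scale limit is supplied by Theorem \ref{th:macro_ve}) produces exactly the cell-averaged transport fields $\overline\v_l^\delta,\widetilde\v_{av}^\delta$ of \eqref{macro_veloc_delta}. The surface-reaction integrals converge, via the boundary unfolding operator, to the volume terms carrying the factors $\gamma_l$ and $\mathcal R_{av}$. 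Inserting the corrector representations into the diffusion terms produces the effective matrices $\mathcal A_l,\widetilde{\mathcal A}_m$ of \eqref{macro_vv} and \eqref{macro_coef_delta}, the interface conditions follow by passing to the limit in the continuity constraint $c_l^\ve=\hat c_{av}^\ve$ on $\hat\Lambda$ (preserved by strong trace convergence) and by testing across $\hat\Lambda$ to recover the $1/\delta$-weighted flux matching, and together these give the system \eqref{macro_av_ve}--\eqref{macro_s_ve}. Uniqueness for this limit system, a linear parabolic system with bounded transport fields and linear reaction couplings established by a standard energy/Gronwall estimate, shows that the whole sequence converges, and the stated regularity follows from weak lower semicontinuity of the a priori bounds.

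I expect the main obstacle to be the simultaneous treatment of the convection terms and the interface coupling. One must secure strong $L^2$-convergence of each concentration, hence the time-derivative estimates and Aubin--Lions compactness, precisely so that $\v_l^\ve c_l^\ve$ passes to the limit with the correct cell-averaged velocity; at the same time one must show that the $1/\delta$-weighted flux transmission on $\hat\Lambda$ survives the limit without generating spurious boundary contributions. These two issues are linked, because the convective flux itself enters the transmission condition on $\hat\Lambda$, so the limit of the interface integral and the limit of the convection term must be carried out consistently.
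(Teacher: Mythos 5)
Your proposal follows essentially the same route as the paper's proof: uniform-in-$\ve$ a priori estimates (with constants depending on the fixed $\delta$), unfolding/two-scale compactness with strong convergence of concentrations to handle the products $\v_l^\ve c_l^\ve$, corrector test functions of the form $\phi^1 + \ve\phi^2(t,x,x/\ve)$, the ansatz built from the cell problems \eqref{UnitCell_Ox_1} and \eqref{unit_cell_2_hat}, limit passage in the interface and surface-reaction terms, and uniqueness of the limit system to upgrade to whole-sequence convergence. One minor correction: since $\delta$ is held fixed, $\Lambda_\delta$ is a fixed perforated domain, so the compactness needed there is the standard unfolding result (as the paper uses, via $\mathcal T^\ast_\ve$ on both $\Omega$ and $\Lambda_\delta$) rather than the thin-layer Theorems \ref{4-4} and \ref{4-8}, which concern layers of thickness $\ve$ and belong to Case 1 --- consistent with your own (correct) observation that no dimension reduction occurs in this limit.
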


Given these ``intermediate" results,  we derive the final macroscopic equations by letting $\delta\rightarrow0$ in \eqref{macro_ve_1}, as follows.
\begin{theorem}\label{th:macro_delta}
As $\delta \to 0$  the sequence of  solutions  of the  equations  \eqref{macro_ve_1} converges to functions  $\overline \v_l \in H({\rm div}; \Omega)$,  $p_l -p_l^0\in W(\Omega)$, $\widetilde \v_{av} \in L^2(\hat \Lambda)$, and $\hat p \in H^1(\hat \Lambda)$, respectively, with $l=a,v$,  
that satisfy the problem 
\begin{equation}\label{macro_delta_ve}
\begin{aligned}
&\overline \v_l =- \mathcal K_l \nabla p_l, \qquad \quad\;   \dv \,( \mathcal K_l \nabla p_l) = 0  && \text{ in } \; \Omega,  \\
& p_l(\hat x, 0)  =\hat p(\hat x)    \hspace{ 1 cm }  \text{ on } \; \hat \Lambda, \hspace{3 cm } p_l= p^0_l   &&\text{ on }  \;\Gamma_D, \\
&\widetilde \v_{av} = - \widetilde {\mathcal K} \nabla_{\hat x}  \hat p, \qquad  \quad  \dv_{\hat x}(\widetilde {\mathcal K} \nabla_{\hat x}  \hat p) = \mathcal K_a \nabla p_a \cdot \n + \mathcal K_v \nabla p_v \cdot \n   \qquad && \text{ on } \; \hat \Lambda, \\
&  \mathcal K_l \nabla p_l\cdot \n =0 \hspace{ 1.1 cm}   \text{ on }\;  \partial \Omega \setminus(\Gamma_D \cup \hat \Lambda), \; \;  \qquad  \widetilde {\mathcal K} \nabla_{\hat x}  \hat p\cdot \n =0   \qquad   &&  \text{ on } \; \partial \hat \Lambda.
\end{aligned}
\end{equation}
\end{theorem}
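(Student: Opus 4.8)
The plan is to read \eqref{macro_ve_1} as a fixed-domain elliptic system for $p_a^\delta,p_v^\delta$ coupled through the interface $\hat\Lambda$ to an elliptic problem on the thin layer $\Lambda_\delta$, and to carry out a dimension reduction collapsing $\Lambda_\delta$ onto $\hat\Lambda$ as $\delta\to0$. Existence of a solution of \eqref{macro_ve_1} for each fixed $\delta$ is supplied by Theorem \ref{th:macro_ve}. The starting point is a single \emph{combined} weak formulation: testing the $\Omega$-equations with $\varphi_l\in W(\Omega)$ and the $\Lambda_\delta$-equation with $\hat\varphi\in H^1(\Lambda_\delta)$, subject to the coupling $\varphi_l=\hat\varphi$ on $\hat\Lambda$, the two interface boundary terms cancel because of the flux transmission condition $\sum_l\mathcal K_l\nabla p_l^\delta\cdot\n=\tfrac1\delta\widetilde{\mathcal K}\nabla\hat p^\delta\cdot\n$ (recalling that the outward normals of $\Omega$ and of $\Lambda_\delta$ on $\hat\Lambda$ are opposite). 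This yields the identity $\sum_{l=a,v}\int_\Omega\mathcal K_l\nabla p_l^\delta\cdot\nabla\varphi_l+\tfrac1\delta\int_{\Lambda_\delta}\widetilde{\mathcal K}\nabla\hat p^\delta\cdot\nabla\hat\varphi=0$, valid for all admissible coupled pairs, on which the entire limit analysis rests.

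First I would derive $\delta$-uniform a priori estimates. Choosing $\varphi_l=p_l^\delta-\tilde p_l^0$ with $\tilde p_l^0\in H^1(\Omega)$ a fixed extension of the Dirichlet datum $p_l^0$, coercivity of $\mathcal K_l$ and $\widetilde{\mathcal K}$ in the combined identity gives $\sum_l\|\nabla p_l^\delta\|_{L^2(\Omega)}^2+\tfrac1\delta\|\nabla\hat p^\delta\|_{L^2(\Lambda_\delta)}^2\le C$ with $C$ independent of $\delta$; the crucial feature is the $\tfrac1\delta$ weight on the thin-layer energy. Rescaling $\Lambda_\delta$ onto the fixed reference slab $\hat\Lambda\times(0,1)$ via $x_n=\delta z$ and setting $P^\delta(\hat x,z)=\hat p^\delta(\hat x,\delta z)$, this estimate becomes $\|\nabla_{\hat x}P^\delta\|_{L^2}^2+\tfrac1{\delta^2}\|\partial_zP^\delta\|_{L^2}^2\le C$, so that $\partial_zP^\delta\to0$ strongly in $L^2$ while $\nabla_{\hat x}P^\delta$ and $\tfrac1\delta\partial_zP^\delta$ remain bounded. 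Extracting subsequences yields $P^\delta\rightharpoonup\hat p$ in $H^1(\hat\Lambda\times(0,1))$ with $\hat p$ independent of $z$, hence $\hat p\in H^1(\hat\Lambda)$, together with $p_l^\delta\rightharpoonup p_l$ weakly in $H^1(\Omega)$ on the fixed domain.

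Next I would pass to the limit in the combined identity using $z$-independent test traces $\hat\varphi=\hat\varphi(\hat x)$. The fixed-domain part converges by weak convergence to $\sum_l\int_\Omega\mathcal K_l\nabla p_l\cdot\nabla\varphi_l$, and the rescaled thin-layer part converges to an interface integral $\int_{\hat\Lambda}\widetilde{\mathcal K}\nabla_{\hat x}\hat p\cdot\nabla_{\hat x}\hat\varphi$ (the $z$-integral over $(0,1)$ contributing only a factor one). Taking first $\hat\varphi\equiv0$ on $\hat\Lambda$ isolates $\dv(\mathcal K_l\nabla p_l)=0$ in $\Omega$ for each $l$; the continuity condition $p_l^\delta=\hat p^\delta$ on $\hat\Lambda$ passes to the limit through the traces to give $p_l(\hat x,0)=\hat p(\hat x)$, and $p_l=p_l^0$ on $\Gamma_D$ is preserved. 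For general $\hat\varphi$, integrating the $\Omega$-integral by parts with $\dv(\mathcal K_l\nabla p_l)=0$ and the no-flux conditions on $\partial\Omega\setminus(\Gamma_D\cup\hat\Lambda)$ converts it into $\int_{\hat\Lambda}(\sum_l\mathcal K_l\nabla p_l\cdot\n)\hat\varphi$; the resulting relation $\int_{\hat\Lambda}(\sum_l\mathcal K_l\nabla p_l\cdot\n)\hat\varphi+\int_{\hat\Lambda}\widetilde{\mathcal K}\nabla_{\hat x}\hat p\cdot\nabla_{\hat x}\hat\varphi=0$ is precisely the weak form of the reduced tangential Darcy law $\dv_{\hat x}(\widetilde{\mathcal K}\nabla_{\hat x}\hat p)=\mathcal K_a\nabla p_a\cdot\n+\mathcal K_v\nabla p_v\cdot\n$ with the natural condition $\widetilde{\mathcal K}\nabla_{\hat x}\hat p\cdot\n=0$ on $\partial\hat\Lambda$. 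The constitutive laws $\overline\v_l=-\mathcal K_l\nabla p_l$ and $\widetilde\v_{av}=-\widetilde{\mathcal K}\nabla_{\hat x}\hat p$ follow from the weak limits of the relations in \eqref{macro_ve_1}, and uniqueness for the coercive coupled problem \eqref{macro_delta_ve} (Lax--Milgram on the combined bilinear form) upgrades subsequential convergence to convergence of the whole family.

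The main obstacle I expect is the transverse behaviour in the thin layer. Although the $\tfrac1\delta$-weighted estimate forces $\hat p$ to be $z$-independent, the effective tangential flux is \emph{not} in general the tangential block of the full three-dimensional $\widetilde{\mathcal K}$: testing the rescaled layer equation with transversely varying functions and invoking the no-flux condition on the top and lateral faces $\partial\Lambda_\delta\setminus\hat\Lambda$ determines the scaled normal derivative $\xi=\lim\tfrac1\delta\partial_zP^\delta$ through the constraint that the normal component of $\widetilde{\mathcal K}(\nabla_{\hat x}\hat p+\xi\,\n)$ vanish, leading to a Schur-complement reduction of $\widetilde{\mathcal K}$. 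Verifying that this reduced operator is exactly the $\widetilde{\mathcal K}$ of \eqref{macro_coef_delta}, so that the limit is literally \eqref{macro_delta_ve}, is the delicate step, and it is where the specific structure of the cell problem \eqref{unit_cell_7_2} must be used. The remaining care concerns justifying the trace and integration-by-parts manipulations near $\hat\Lambda$ uniformly in $\delta$.
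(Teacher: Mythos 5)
Your proposal follows essentially the same route as the paper's proof of Theorem \ref{th:macro_delta}: the paper also works from the combined weak formulation \eqref{delta_weak_1} with coupled test functions, derives the $\delta$-weighted energy bound by testing with the pressures (corrected by the Dirichlet lift), rescales $\Lambda_\delta$ onto $\hat\Lambda\times(0,1)$, extracts $p_l^\delta\rightharpoonup p_l$ in $H^1(\Omega)$ and $\tilde p^\delta\rightharpoonup \hat p$ with $\hat p$ independent of the vertical variable together with a corrector limit $\delta^{-1}\partial_{y_n}\tilde p^\delta\rightharpoonup\partial_{y_n}\hat p^1$, and passes to the limit with test functions $\hat\phi_1(\hat x)+\delta\,\hat\phi_2(\hat x,x_n/\delta)$, recovering the interface equation by integrating the $\Omega$-part by parts exactly as you describe.

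The one point where you genuinely diverge from the paper is the transverse corrector, and your version of that step is the more scrupulous one. The paper takes $\hat\phi_2\in C^\infty_0(\hat\Lambda\times(0,1))$, deduces $\partial_{y_n}\bigl[(\widetilde{\mathcal K}\nabla_{\hat x}\hat p)_n+\widetilde{\mathcal K}_{nn}\,\partial_{y_n}\hat p^1\bigr]=0$, and concludes that $\hat p^1$ is constant in $y_n$; strictly, this argument only yields $\partial_{y_n}\hat p^1=g(\hat x)$ for an undetermined, $y_n$-independent slope $g$, and with compactly supported $\hat\phi_2$ the limit identity never fixes $g$: the terms paired with $\partial_{y_n}\hat\phi_2$ integrate to zero, and the only surviving contribution of $g$ is $\int_{\hat\Lambda} g\sum_{i<n}\widetilde{\mathcal K}_{in}\,\partial_{x_i}\hat\phi_1\,d\hat x$. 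Your proposal instead fixes the corrector from the no-flux condition on the top face --- which requires admitting test functions that do not vanish at $y_n=1$ --- giving $g=-(\widetilde{\mathcal K}\nabla_{\hat x}\hat p)_n/\widetilde{\mathcal K}_{nn}$ and hence the Schur complement $\widetilde{\mathcal K}_{ij}-\widetilde{\mathcal K}_{in}\widetilde{\mathcal K}_{nj}/\widetilde{\mathcal K}_{nn}$ as the effective tangential tensor; this is also what formal two-scale asymptotics in the layer produce. The two outcomes --- Schur complement versus the tangential block of $\widetilde{\mathcal K}$ written in \eqref{macro_delta_ve} --- coincide precisely when $\widetilde{\mathcal K}_{in}=0$ for $i<n$ (e.g.\ when the cell geometry $\widetilde Z_{av}$ in \eqref{unit_cell_7_2} is symmetric in $y_n$, $\widetilde{\mathcal K}$ being symmetric). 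So the ``delicate step'' you flag is not a defect of your approach: it makes explicit a structural hypothesis on the cell problem that the paper's shortcut leaves implicit, and under that hypothesis your argument and the paper's give the same limit system, while without it the Schur-complement form is the one the limit actually satisfies. The remaining items in your outline (trace passage for $p_l=\hat p$ on $\hat\Lambda$, the natural no-flux conditions, Lax--Milgram uniqueness to upgrade subsequential convergence to convergence of the whole family) match the paper.
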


\begin{theorem}\label{th:macro_ox_delta} 
As   $\delta \to 0$ we obtain the macroscopic problem 
\begin{equation}\label{macro_delta_av}
\begin{aligned}
&\theta_l \partial_t  c_l - \dv(\mathcal{A}_l \nabla c_l - \overline{\v}_l  c_l) =  \lambda_l \gamma_l(c_s-  c_l), \; && \text{ in } \Omega_T \; , \\
&\theta_s \partial_t  c_s - \dv(\mathcal{A}_s \nabla c_s ) = \sum_{l=a,v} \lambda_l \gamma_l(c_l- c_s)
 -\theta_s \ddashinttt_{Y_s}  d_s(t,y) dy  \;  c_s   &&  \text{ in } \Omega_T, \\
& c_l(t,\hat x, 0)  = \hat c_{av} (t, \hat x) \qquad  \; \; \; c_s(t,\hat x, 0)  = \hat c_s (t, \hat x)  && \text{ on } \hat \Lambda_T, \qquad \; \\ 
& (\mathcal{A}_l \nabla c_l - \overline{\v}_l  c_l)\cdot \n  = 0  \qquad \text{ on }(\partial \Omega\setminus( \hat \Lambda\cup \Gamma_D))\times(0,T) \; , \\
& c_l(t,x)= c_{l,D} && \text{ on }  \Gamma_{D,T} , \\
 & \mathcal{A}_s \nabla c_s\cdot \n  = 0  \hspace{2 cm }  \text{ on }(\partial \Omega\setminus  \hat \Lambda)\times(0,T), \\
& c_l(0,x)= c_l^0(x) \hspace{ 2.5 cm }  c_s(0,x)= c^0_{s}(x) \qquad && \text{ in } \Omega,
 \end{aligned}
\end{equation}
where $l=a,v$, and   in  $\hat \Lambda_T$ we have 
\begin{equation}\label{macro_delta_s}
\begin{aligned}
& \widetilde \theta_{av} \partial_t \hat c_{av} - \dv_{\hat x}(\mathcal{\widetilde A}_{av} \nabla \hat c_{av} - \widetilde{\v}_{av} \hat c_{av}) = \mathcal R_{av} (\hat c_s- \hat c_{av})  - \sum_{l=a,v} (\mathcal{A}_l \nabla c_l - \overline{\v}_l  c_l)\cdot \n,  \\
 & \widetilde \theta_s \partial_t \hat c_s - \dv_{\hat x} (\mathcal{\widetilde A}_s \nabla \hat c_s ) =  \mathcal R_{av}(\hat c_{av}- \hat c_s) -\mathcal{A}_s \nabla c_s \cdot \n  -  \widetilde \theta_s    \ddashinttt_{\widetilde Z_s} \hat d_s(t,y)  dy\; \hat c_s,  \\
& (\mathcal{\widetilde A}_{av} \nabla \hat c_{av} - \widetilde {\v}_{av} \hat c_{av}) \cdot \n = 0,  \qquad  \mathcal{\widetilde A}_s \nabla \hat c_s  \cdot \n = 0 \qquad   \text{ on } \partial \hat \Lambda_T\; , \\
&\hat c_{av}(0,\hat x)= \hat c^0(\hat x) \hspace{ 3.1 cm }   \hat c_s(0,\hat x)= \hat c^0_{s}(\hat x)  \qquad  \text{ in } \hat \Lambda.
 \end{aligned}
\end{equation}
 Moreover, the solutions of equations  \eqref{macro_delta_av} and \eqref{macro_delta_s} satisfy $c_l -c_{l,D} \in L^2(0,T; W(\Omega))$, $c_s \in L^2(0,T; H^1(\Omega))\cap  H^1(0,T; L^2(\Omega))$, $c_l \in H^1(0,T; L^2(\Omega))$,  
$\hat c_j \in L^2(0,T; H^1(\hat \Lambda)) \cap H^1(0,T; L^2(\hat \Lambda))$  for
$l=a,v$, $j=av,s$.
\end{theorem}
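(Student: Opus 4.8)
The plan is to pass to the limit $\delta\to0$ in the intermediate oxygen system \eqref{macro_av_ve}--\eqref{macro_s_ve} by the rescaling and dimension-reduction strategy already used for the velocity fields in Theorem \ref{th:macro_delta}, now adapted to the concentrations. The first step is to derive a priori estimates for $c_l^\delta, c_s^\delta$ on the fixed domain $\Omega$ and for $\hat c_{av}^\delta, \hat c_s^\delta$ on the thin layer $\Lambda_\delta = \hat\Lambda\times(0,\delta)$ that are uniform in $\delta$. The natural energy functional tests the fat-tissue equations with $c_l^\delta, c_s^\delta$ and the thin-layer equations with $\hat c_{av}^\delta,\hat c_s^\delta$, and then adds the two contributions with the thin-layer part weighted by $1/\delta$. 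With this weighting the interface flux integrals over $\hat\Lambda$ cancel exactly, because the $1/\delta$-scaled transmission conditions in \eqref{macro_av_ve}--\eqref{macro_s_ve} together with the continuity $c_l^\delta=\hat c_{av}^\delta$, $c_s^\delta=\hat c_s^\delta$ on $\hat\Lambda_T$ match the two boundary terms. After rescaling the transverse variable $x_n\mapsto z_n=x_n/\delta$, which maps $\Lambda_\delta$ onto the fixed reference layer $\hat\Lambda\times(0,1)$ and turns $\partial_{x_n}$ into $\tfrac1\delta\partial_{z_n}$, the resulting uniform bound controls both the tangential gradient $\nabla_{\hat x}$ of the rescaled thin-layer concentrations and, crucially, forces the transverse derivative $\partial_{z_n}$ to vanish in the limit. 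The nonnegativity of the reaction couplings and the $L^\infty$ bounds inherited from the $\ve\to0$ limit (Theorem \ref{thm7-2}) supply the remaining control and the stated $L^\infty$ estimates.

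The second step is to extract weakly convergent subsequences. On the fixed domain $\Omega$ one gets $c_l^\delta\rightharpoonup c_l$ and $c_s^\delta\rightharpoonup c_s$ in $L^2(0,T;H^1(\Omega))$, with uniform bounds on $\partial_t c_l^\delta,\partial_t c_s^\delta$ in $L^2(\Omega_T)$; the Aubin--Lions--Simon lemma then yields strong $L^2(\Omega_T)$ convergence. On the rescaled reference layer the thin-layer concentrations converge to limits that are independent of $z_n$ and hence define functions $\hat c_{av}(\hat x),\hat c_s(\hat x)$ on $\hat\Lambda_T$. The velocity fields are already controlled: Theorem \ref{th:macro_delta} provides $\overline\v_l^\delta\to\overline\v_l$ and $\widetilde\v_{av}^\delta\to\widetilde\v_{av}$, so combining these with the strong $L^2$ convergence of the concentrations lets me identify the limits of the convective products $\overline\v_l^\delta c_l^\delta$ and $\widetilde\v_{av}^\delta\hat c_{av}^\delta$.

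The third step is to pass to the limit in the weak formulation. The fat-tissue equations \eqref{macro_delta_av}, together with the interface traces $c_l(\hat x,0)=\hat c_{av}$, $c_s(\hat x,0)=\hat c_s$, follow by weak lower semicontinuity and trace convergence. The skin-layer equations \eqref{macro_delta_s} are obtained by testing the thin-layer equations with functions constant in the transverse direction and integrating over $\Lambda_\delta$: after rescaling, the tangential part of the flux converges to the two-dimensional operators $\dv_{\hat x}(\mathcal{\widetilde A}_{av}\nabla_{\hat x}\hat c_{av}-\widetilde\v_{av}\hat c_{av})$ and $\dv_{\hat x}(\mathcal{\widetilde A}_s\nabla_{\hat x}\hat c_s)$, while the interface boundary term, rewritten through the $1/\delta$-scaled transmission condition against the $O(\delta)$ volume of $\Lambda_\delta$, produces exactly the inflow/outflow coupling terms $-\sum_{l=a,v}(\mathcal A_l\nabla c_l-\overline\v_l c_l)\cdot\n$ and $-\mathcal A_s\nabla c_s\cdot\n$. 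The reaction and oxygen-uptake terms pass to the limit by strong convergence, and the claimed regularity $c_l-c_{l,D}\in L^2(0,T;W(\Omega))$, $c_s\in L^2(0,T;H^1(\Omega))\cap H^1(0,T;L^2(\Omega))$, $\hat c_j\in L^2(0,T;H^1(\hat\Lambda))\cap H^1(0,T;L^2(\hat\Lambda))$ follows from lower semicontinuity of the norms; uniqueness of the limit problem then upgrades subsequential convergence to convergence of the whole family.

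The main obstacle I anticipate is the correct bookkeeping of the thin-layer scaling — establishing that the $1/\delta$-weighted energy identity produces the exact cancellation of interface fluxes and that the combined bound forces the transverse gradients to vanish, so that $\hat c_{av},\hat c_s$ are genuinely two-dimensional, while the $1/\delta$ flux matching, balanced against the $O(\delta)$ volume of $\Lambda_\delta$, yields finite coupling terms rather than degenerating or blowing up. A secondary difficulty is the convective term in the thin layer: strong compactness of $\hat c_{av}^\delta$ on the rescaled reference cell, needed to identify the limit of $\widetilde\v_{av}^\delta\hat c_{av}^\delta$, requires uniform time-derivative estimates and an Aubin--Lions argument carried out on the $\delta$-dependent geometry.
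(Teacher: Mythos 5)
Your overall strategy coincides with the paper's own proof: a $1/\delta$-weighted energy identity in which the interface fluxes cancel through the transmission conditions, rescaling $x_n=\delta y_n$ onto the fixed layer $\hat\Lambda\times(0,1)$, extraction of limits that are independent of the transverse variable (with Aubin--Lions compactness on the fixed rescaled domain and the velocity convergence from Theorem \ref{th:macro_delta}), limit passage with test functions on $\Omega$ matched with test functions on the layer, and uniqueness to upgrade subsequential convergence. This is exactly the paper's route, including the a priori estimates \eqref{a_priori_delta} and the convergences \eqref{convergence}.

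There is, however, one concrete gap in your identification of the limit equations on $\hat\Lambda_T$. After rescaling, the thin-layer flux term reads
\[
\Big\langle\, \widetilde{\mathcal A}_m\big(\nabla_{\hat x}\tilde c^\delta_m + \delta^{-1}\partial_{y_n}\tilde c^\delta_m\,\mathbf{e}_n\big),\; \nabla_{\hat x}\hat\phi_1 \,\Big\rangle_{\hat\Lambda_T\times(0,1)}
\]
when, as you propose, the test function $\hat\phi_1(\hat x)$ is constant in the transverse direction. Your estimate $\|\partial_{y_n}\tilde c^\delta_m\|_{L^2}\leq C\delta$ makes the limit concentration two-dimensional, but the quantity entering the flux is $\delta^{-1}\partial_{y_n}\tilde c^\delta_m$, which is only \emph{bounded} in $L^2$ and converges weakly to a nontrivial corrector $\partial_{y_n}\hat c^1_m$; it does not vanish. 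Through the entries $(\widetilde{\mathcal A}_m)_{jn}$, $j<n$, this corrector contributes to the tangential equation, so testing only with transverse-constant functions does not identify the limit operator as $\dv_{\hat x}(\widetilde{\mathcal A}_m\nabla_{\hat x}\hat c_m)$. The paper closes this by additionally using test functions $\hat\phi(t,x)=\hat\phi_1(t,\hat x)+\delta\hat\phi_2(t,\hat x,x_n/\delta)$ with $\hat\phi_2$ compactly supported in $(0,1)$: taking $\hat\phi_1=0$ and exploiting that $\widetilde{\mathcal A}_m$ is independent of $y_n$, one eliminates the corrector's contribution, and only then do the stated equations \eqref{macro_delta_s} follow. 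Your plan needs this extra ingredient (the same device already appears in the paper's proof of Theorem \ref{th:macro_delta}, where it is used to show $\hat p^1$ is constant in $y_n$); everything else in your proposal matches the paper's argument.
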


We remark that the structure of the macroscopic equations for the blood velocity fields is the same  in both cases, i.e. in Theorem~\ref{5-1} and Theorem~\ref{th:macro_delta}. However, the permeability tensors for the flow in the skin layer  are different, since they are determined by solutions of different unit cell problems; see equations \eqref{eq:omega1}, \eqref{eq:omega2}, and \eqref{unit_cell_7_2}. These results reflect the differences in the microscopic structure and the microscopic equations for the skin layer in the two different cases. We also remark that the factor of $2$ in the macroscopic equations \eqref{macro_macro_2} is specific to Case 1.

A similar situation appears in  the macroscopic equations for oxygen transport. In both cases, we obtain the same structure for the equations; see Theorem~\ref{6-1} and Theorem~\ref{th:macro_ox_delta}. However, the macroscopic diffusion coefficients and transport velocities are different, as manifested by equations \eqref{macro_vv} and  \eqref{macro_coef_delta} for the diffusion coefficients and equations  \eqref{macro_veloc_ve} and \eqref{macro_veloc_delta} for the transport velocities. Again, these results reflect the differences in the microscopic structure of the the skin tissue layer in the two different cases.

Finally, the ``intermediate" system  obtained in Case 2, when we let $\varepsilon\rightarrow 0$ but keep $\delta$ fixed, represents the macroscopic equations for the blood flow and oxygen concentration   in the two  domains with different microscopic structures (skin layer and fat tissue layer). 

\section{The microscopic model}\label{mathmodel}

We now introduce the microscopic model that leads to the asymptotic (macroscopic) results stated in the previous section. 
As in \cite{Matzavinos:2009} we adopt a three-dimensional rectangular geometry for a DIEP tissue flap with a two-layer tissue architecture.  The approach in this paper differs from that in \cite{Matzavinos:2009} in that the geometry of the vascular microstructure is explicitly defined. A two-dimensional schematic representation of the three-dimensional geometry used is shown in Fig. \ref{fig1}. The top layer of unit cells in Fig. \ref{fig1} corresponds to the dermic and epidermic layers of the skin, whereas the remainder of the domain corresponds to fat tissue.

We denote the fat tissue layer by $\Omega=\hat \Omega\times (-L,0)$, with some  $L>0$ and $\hat \Omega\subset \R^2$. The top (skin) layer is assumed to be thin as compared to the fat tissue layer and is denoted by $\Lambda^\ve= \hat \Omega\times(0,\ve)$ with $\Lambda^1= \hat \Omega\times(0, 1)$, $\hat \Lambda = \hat\Omega\times\{0 \}$. The small positive parameter $\ve$ represents both the scale of the unit cell describing the arterial branching pattern and the depth of the skin layer (this assumption is relaxed in section 8). 

The vascular microstructure is assumed to differ in the two layers of the domain. Specifically, $\Omega$ is constructed by a periodic arrangement of a (scaled) unit cell $\overline Y= \overline Y_a\cup \overline Y_v\cup \overline Y_s$, where $Y_a$,  $Y_v$, and $Y_s$ partition $Y$ into the geometric domains of arteries, veins, and tissue, respectively. Figure \ref{fig2}(a) shows an example of such a unit cell that represents a specific arterial branching pattern for the fat tissue layer. We define the domains occupied by  arteries, veins and tissue in $\Omega$ as $\Omega_a^\ve =\text{Int} \big(\cup_{\xi \in \mathbb Z^3}  \ve (\overline Y_{a}+ \xi) \big)\cap \Omega$, 
$\Omega_v^\ve = \text{Int}\big(\cup_{\xi \in \mathbb Z^3} \ve (\overline Y_{v}+ \xi)\big)\cap \Omega$, and $\Omega_s^\ve = \text{Int}\big(\cup_{\xi \in \mathbb Z^3} \ve (\overline Y_{s}+\xi) \big)\cap \Omega$, respectively. The small parameter  $\ve$ corresponds to the size of the arterial microscopic structure. In particular,  $\ve$ is the ratio between the size of the periodically repeating unit cell and the size of the whole tissue domain.

Similarly, we define a (different) unit cell $\overline Z=\overline Z_{a}\cup \overline Z_{v}\cup \overline Z_{s}$ that describes the arterial and venous geometry in  $\Lambda^\ve$. We  define $\Lambda_a^\ve = \text{Int} \big(\cup_{\eta \in \mathbb Z^2}  \ve (\overline Z_{a} + (\eta, 0)) \big)\cap \Lambda^\ve$,
$\Lambda_v^\ve =\text{Int} \big(\cup_{\eta \in \mathbb Z^2}  \ve (\overline Z_{v} + (\eta, 0)) \big) \cap \Lambda^\ve$, and 
$\Lambda_s^\ve = \text{Int} \big(\cup_{\eta \in \mathbb Z^2}  \ve (\overline Z_{s} + (\eta, 0)) \big) \cap \Lambda^\ve$ as the domains in $\Lambda^\ve$ of arteries, veins, and tissue respectively. Figure \ref{fig2}(b) shows an example of a unit cell for $\Lambda^\ve$. Throughout the paper, it is assumed that the skin layer $\Lambda^\ve$  is characterized by the presence of arterial-venous connections that facilitate the exchange of blood between the arterial and venous systems (see, e.g., \cite{physiology,Matzavinos:2009}). A simple example of an arterial-venous connection is shown in Fig. \ref{fig2}(b)

\begin{table*}
\caption{Macroscopic domains (see text for details)\bigskip}\label{table21}
\begin{center}
\begin{tabular}{|l|l|}
\hline
Notation & Description\\
\hline
\hline
$\displaystyle \Omega = \hat \Omega\times (-L,0)$  & Fat tissue layer\\  \hline
$\displaystyle  \hat \Lambda = \hat \Omega \times \{ 0\}$ & Upper boundary of $ \Omega$\\  \hline
$\displaystyle  \Lambda^\ve = \hat \Omega \times (0, \ve)$  & Skin layer (scaling of section 3)  \\  \hline
$\displaystyle \Lambda_\delta = \hat \Omega \times (0, \delta)$ &  Skin layer (scaling of section 8) \\  \hline 
\end{tabular}
\end{center}
\end{table*}
\begin{table*}
\caption{Unit cell domains (see text for details)\bigskip}\label{table22}
\begin{center}
\begin{tabular}{|l|l|}
\hline
Notation & Description\\
\hline
\hline
$\overline Y= \overline Y_a\cup \overline Y_v\cup \overline Y_s$ & Unit cell for $ \Omega$\\ \hline
 $Y_a, Y_v, Y_s\subset Y$ & Open subsets  with Lipschitz  boundaries  $\Gamma_a$ and  $\Gamma_v$, \\
  & $Y_a \cap Y_v = \emptyset$ \\ \hline
 $\overline Z = \overline Z_{a}\cup \overline Z_{v}\cup \overline Z_{s}$ & Unit cell for  $\Lambda^\ve$\\ \hline
 $Z_a, Z_v , Z_s\subset Z$ & Open subsets  with Lipschitz boundaries  $R_a$ and   $R_v$, \\
 & $Z_a\cap Z_v = \emptyset$  \\  \hline
$\overline{\widetilde Z}  =\overline{\widetilde Z_{a}}\cup\overline{ \widetilde Z_{v}}\cup\overline{\widetilde Z_{s}}$ & Unit cell for  $\Lambda_\delta$\\ \hline
$\widetilde Z_a,  \widetilde Z_v, \widetilde Z_s \subset \widetilde Z$ & Open subsets with Lipschitz boundaries $\widetilde R_a$ and   $\widetilde R_v$,\\
& $\widetilde Z_a\cap \widetilde Z_v = \emptyset$\\ \hline   
\end{tabular}
\end{center}
\end{table*}
\begin{table*}
\caption{Microscopic domains (see text for details)\bigskip}\label{table23}
\begin{center}
\begin{tabular}{|l|l|}
\hline
Notation & Description\\
\hline
\hline
$\displaystyle \Omega_a^\ve = \text{Int}\Big(\bigcup_{\xi \in \mathbb Z^3}\ve(\overline Y_{a}+ \xi)\Big)\cap \Omega$ & Arteries in fat tissue layer \\
\hline
$\displaystyle \Omega_v^\ve =\text{Int}\Big( \bigcup_{\xi \in \mathbb Z^3}\ve(\overline Y_{v}+ \xi)\Big)\cap \Omega$  & Veins in fat tissue layer\\ \hline
$\displaystyle \Omega_s^\ve =\text{Int}\Big( \bigcup_{\xi \in \mathbb Z^3}\ve(\overline Y_{s}+ \xi)\Big)\cap \Omega$  & Tissue domain\\ \hline
$\displaystyle \Lambda_a^\ve =\text{Int}\Big( \bigcup_{\eta \in \mathbb Z^2}\ve(\overline Z_{a}+ (\eta,0))\Big)\cap \Lambda^\ve$ & Arteries in skin layer (section 3) \\ \hline
$\displaystyle \Lambda_v^\ve = \text{Int}\Big(\bigcup_{\eta \in \mathbb Z^2}\ve(\overline Z_{v}+ (\eta,0))\Big) \cap \Lambda^\ve$ & Veins in skin layer (section 3)\\ \hline
$\displaystyle \Lambda_s^\ve = \text{Int}\Big( \bigcup_{\eta \in \mathbb Z^2}\ve(\overline Z_{s}+ (\eta,0))\Big) \cap \Lambda^\ve$ & Tissue in skin layer (section 3)\\ \hline
$\displaystyle \Lambda_a^\delta = \text{Int}\Big( \bigcup_{\xi \in \mathbb Z^3}\ve(\overline{\widetilde Z_{a}}+ \xi)\Big)\cap \Lambda_\delta$ & Arteries in skin layer (section 8) \\ \hline
$\displaystyle \Lambda_v^\delta =\text{Int}\Big( \bigcup_{\xi \in \mathbb Z^3}\ve(\overline{\widetilde Z_{v}}+ \xi)\Big)\cap \Lambda_\delta$ & Veins in skin layer  (section 8)\\ \hline
$\displaystyle \Lambda_s^\delta =\text{Int}\Big( \bigcup_{\xi \in \mathbb Z^3}\ve(\overline{\widetilde Z_{s}}+ \xi)\Big)\cap \Lambda_\delta$ & Tissue domain in skin layer (section 8)\\ \hline
\end{tabular}
\end{center}
\end{table*}
\begin{table*}
\caption{Microscopic boundaries (see text for details)\bigskip}\label{table24}
\begin{center}
\begin{tabular}{|l|l|}
\hline
Notation & Description\\
\hline
\hline
$\displaystyle \Gamma_a^\ve = \bigcup_{\xi \in \mathbb Z^3}\ve(\Gamma_{a}+ \xi)\cap \Omega$  & Boundaries of  arteries in fat tissue layer \\  \hline
$\displaystyle \Gamma_v^\ve = \bigcup_{\xi \in \mathbb Z^3}\ve(\Gamma_{v}+ \xi)\cap \Omega$ & Boundaries of veins in fat tissue layer\\  \hline
$\displaystyle R_a^\ve = \bigcup_{\eta \in \mathbb Z^2}\ve(R_{a}+ (\eta, 0))\cap \Lambda^\ve$ & Boundaries of  arteries in skin layer (section 3)\\  \hline
$\displaystyle R_v^\ve = \bigcup_{\eta \in \mathbb Z^2}\ve(R_{v}+ (\eta, 0))\cap \Lambda^\ve$ & Boundaries of veins in skin layer (section 3)\\  \hline
$\displaystyle \widetilde R_a^\ve = \bigcup_{\xi \in \mathbb Z^3}\ve(\widetilde R_{a}+ \xi)\cap \Lambda_\delta$ & Boundaries of  arteries in skin layer  (section 8) \\  \hline
$\displaystyle \widetilde R_v^\ve = \bigcup_{\xi \in \mathbb Z^3}\ve(\widetilde R_{v}+ \xi)\cap \Lambda_\delta$ & Boundaries of veins in skin layer  (section 8)\\  \hline
\end{tabular}
\end{center}
\end{table*}

We first consider that the depth of the skin layer is of order $\ve$. This condition is later modified in section 8.
In the arteries and veins located in $\Omega$, blood is assumed to flow with velocities $\v_a^\ve(x)$ and $\v_v^\ve(x)$, respectively, according to the Stokes equation with zero-slip boundary conditions. Specifically, we let $p_a^\ve(x)$ and $p_v^\ve(x)$ denote the arterial and venous pressures, respectively, and we assume that $(\v_a^\ve, p_a^\ve)$ and $(\v_v^\ve, p_v^\ve)$ satisfy 
\begin{eqnarray}\label{Stokes_Omega}
\begin{cases}
 -\ve^2 \mu \,\Delta \v_l^\ve +\nabla p_l^\ve=0 \; , \qquad  
  \dv\,\v^\ve_l=0  &  \quad \text{ in } \Omega_l^\ve  ,\medskip\\
  \v_l^\ve=0  & \quad \text{ on } \Gamma_l^{\ve},
\end{cases}
\end{eqnarray}
where $l= a, v$, and $\Gamma_a^{\ve}$ and $\Gamma_v^{\ve}$ denote the outer surface of arteries and veins, respectively, in $\Omega$. As usual, the scaling in the viscosity term is such that the velocity field has a non-trivial limit as $\ve\rightarrow 0$ (see, e.g., \cite{Hornung}). Similarly, we assume that in the skin tissue layer $\Lambda^\ve$,  $(\hat \v_a^\ve, \hat p_a^\ve)$ and   $(\hat \v_v^\ve, \hat p_v^\ve)$  satisfy
\begin{eqnarray}\label{Stokes_Lambda}
\begin{cases}
 - \ve^2 \mu\,  \Delta \hat\v^\ve_l +  \nabla \hat p_l^\ve= 0 \; ,   \quad   \dv \, \hat\v_l^\ve=0  &  \quad \text{ in }  \Lambda_l^\ve, \medskip\\
  \hat\v_l^\ve=0  & \quad \text{ on }  R_l^{\ve} \; , 
  \end{cases}
\end{eqnarray}
where $l= a, v$, and $R_a^{\ve}$ and $R_v^{\ve}$ denote the outer surface of arteries and veins, respectively, in $\Lambda^\ve$.
We define  $\partial\Omega= \Gamma_D\cup(\partial \hat\Omega \times (-L, 0))\cup \hat \Lambda$, where $\Gamma_D$ denotes the lower horizontal boundary of the fat tissue layer, and  impose the  boundary conditions 
\begin{equation}\label{Stokes_BC_O}
p_l^\ve = p_l^0, \; \; \v_l^\ve \times \n =0 \;\; \text{ on } \Gamma_D\cap\partial \Omega_{l}^\ve,  
\quad \;\;
\v_l^\ve = 0 \; \; \text{ on }(\partial \hat\Omega \times (-L, 0)) \cap \partial \Omega_l^\ve, 
\end{equation}
where $l=a,v$. We consider Dirichlet boundary conditions for the blood velocities on  $\partial \Lambda^\ve= (\partial \hat\Omega\times(0,\ve))\cup \hat\Lambda \cup (\hat\Omega \times \{\ve\} )$
\begin{equation}\label{Stokes_BC_L}
\hat \v_l^\ve =0   \quad \text{ on } \partial \hat\Omega\times(0,\ve)\cap \partial \Lambda_l^\ve , \qquad
\hat \v_l^\ve =0  \quad \text{ on } \hat\Omega\times\{\ve\}\cap \partial \Lambda_l^\ve , \qquad l=a,v, 
\end{equation}
and we impose transmission conditions on $\hat \Lambda$:
\begin{eqnarray}\label{Stokes_TransC}
\begin{cases}
(-2\,\ve^2  \mu  \operatorname{S}\!\v_l^\ve+p_l^\ve I)\cdot \n= (-2\, \ve^2 \mu   \S\hat \v_l^\ve+ \hat p_l^\ve I)\cdot \n &\quad \text{on }\partial\Omega_l^\ve\cap \hat \Lambda\; , \medskip\\
\v_l^\ve =   \frac{1}{\ve}\hat\v_l^\ve &  \quad \text{on }\partial\Omega_l^\ve\cap \hat \Lambda \; ,
\end{cases}
\end{eqnarray}
where $l=a,v$, and $\S \textbf{u}$ denotes the symmetric gradient $\S \textbf{u}=1/2(\partial_{x_i} u_j + \partial_{x_j} u_i)_{ij}$. 
The $\ve^{-1}$ scaling  in the velocity boundary condition balances the blood velocity field in the skin layer with the depth of the layer.

We let $\Sigma^\ve$ denote the arterial-venous connections in $\Lambda^\ve$. In other words, $\Sigma^\ve$ denotes the $n-1$-dimensional surfaces, where arteries and veins meet in $\Lambda^\ve\subset\mathbb{R}^n$. We impose continuity conditions for blood velocities and forces on $\Sigma^\ve$, as follows.
\begin{eqnarray}\label{Stokes_ContC}
\begin{cases}
(-2\ve^2  \mu \S\hat \v_a^\ve+ \hat p_a^\ve I)\cdot \n=(-2\ve^2  \mu \S\hat \v_v^\ve+ \hat p_v^\ve I)\cdot \n & \text{ on } \Sigma^\ve, \medskip\\
\hat \v_a^\ve  = \hat \v_v^\ve  \qquad & \text{ on }\Sigma^\ve \; .
\end{cases}
\end{eqnarray}

The oxygen concentrations in the tissue and the arterial and venous blood within the fat tissue layer are denoted by $c_s^\ve(x,t)$, $c_a^\ve(x,t)$, and $c_v^\ve(x,t)$, respectively. Similarly, the corresponding concentrations in the skin tissue layer are denoted by $\hat c_s^\ve(x,t)$,  $\hat c_a^\ve(x,t)$, and $\hat c_v^\ve(x,t)$, respectively. Oxygen in the blood is transported by the flow and diffuses within the fluid. Hence, the equations describing oxygen transport in the blood are given by
\begin{eqnarray}\label{Diff_AV}
\begin{cases}
 \partial_t c_l^\ve- \dv(D_l^\ve \nabla c_l^\ve- \v_l^\ve c_l^\ve)=0  & \quad \text{ in } \Omega_l^\ve\times(0,T) \; , \medskip\\
\frac 1 \ve \partial_t \hat c_l^\ve  -\frac 1 \ve  \dv( \hat D_l^\ve \nabla\hat c_l^\ve -  \hat\v_l^\ve \hat c_l^\ve)= 0 & \quad \text{ in }  \Lambda_l^\ve\times(0,T) \; ,
\end{cases}
\end{eqnarray}
where $l=a,v$. Oxygen diffuses within the tissue with diffusion coefficient $D_s^\ve$, and it is assumed to decay and/or be consumed by the tissue cells at a rate proportional to oxygen concentration. The equations for $c_s^\ve(x,t)$ and $\hat c_s^\ve(x,t)$ are then
\begin{eqnarray}\label{Diff_Tis}
\begin{cases}
 \partial_t c_s^\ve- \dv(D_s^\ve \nabla c_s^\ve)=- d_s^\ve c_s^\ve & \quad \text{ in } \Omega_s^\ve\times(0,T)\; , \medskip \\
\frac 1 \ve \partial_t \hat c_s^\ve -\frac 1 \ve  \dv(\hat D_s^\ve \nabla \hat c_s^\ve)= - \frac 1 \ve  \hat d_s^\ve \hat c_s^\ve  & \quad \text{ in }
 \Lambda_s^\ve\times(0,T) \; . 
 \end{cases}
\end{eqnarray}

The boundary conditions on the surface of the blood vessels describe the flux of oxygen from the blood into the tissue at a rate proportional to the difference in the oxygen concentrations. 
\begin{eqnarray}\label{Diff_BC_micro_AV}
\begin{cases}
 (D_l^\ve\nabla c_l^\ve  - \v^\ve c_l^\ve )\cdot \n = - \ve\lambda_l (c_l^\ve- c_s^\ve) &\quad \text{on } \Gamma_l^{\ve}\times(0,T), \medskip\\
(\hat D_l^\ve  \nabla \hat  c_l^\ve -  \hat \v^\ve_l \hat c^\ve_l) \cdot \n = - \ve\hat \lambda_l(\hat c_l^\ve- \hat c_s^\ve) & \quad \text{on } R_l^{\ve}\times(0,T) ,
\end{cases}
\end{eqnarray}
for $l=a,v$, and
\begin{eqnarray}\label{Diff_BC_micro_Tis}
\begin{cases}
 D_s^\ve\nabla c_s^\ve \cdot \n = \ve\lambda_l(c_l^\ve- c_s^\ve)  & \quad \text{on } \Gamma_l^{\ve}\times(0,T), \medskip\\
\hat  D_s^\ve\nabla \hat c_s^\ve \cdot \n = \ve\hat \lambda_l(\hat c_l^\ve- \hat c_s^\ve) & \quad \text{on }  R_l^{\ve}\times(0,T) , 
\end{cases}
\end{eqnarray}
where the constants $\lambda_l$ and  $\hat \lambda_l$, $l=a,v$,  are the oxygen permeability coefficients of the  arterial and venous blood vessels. 

 In addition to the exchange of oxygen between blood vessels and tissue, oxygen in arterial blood is transported to the venous system through the arterial-venous connections in the upper (skin) layer of the domain. In the following, we assume continuity of concentrations and fluxes at the arterial-venous connections $\Sigma^\ve$
\begin{eqnarray}\label{Diff_ContC}
\hat c_a^\ve = \hat c_v^\ve,   \quad 
( \hat D_a^\ve\nabla \hat c_a^\ve - \hat \v_a^\ve \hat c_a^\ve)\cdot\n = (\hat D_v^\ve\nabla \hat  c_v^\ve - \hat \v_v^\ve \hat c_v^\ve)\cdot\n 
\quad \text{on } \, \, \Sigma^\ve \times(0,T)
\end{eqnarray}
We also impose transmission conditions between the fat tissue layer and the skin layer
\begin{eqnarray}\label{Diff_TransC}
\begin{cases}
c_l^\ve = \hat c_l^\ve,   \;
(D_l^\ve\nabla c_l^\ve -  \v_l^\ve c_l^\ve) \cdot\n =  \frac 1 \ve (\hat D_l^\ve\nabla \hat c_l^\ve-   \hat \v_l^\ve \hat c_l^\ve )\cdot\n  &
 \text{on }(\partial\Omega_l^\ve\cap \hat \Lambda)\times(0,T), \medskip\\
c_s^\ve =  \hat c_s^\ve,   \; 
D_s^\ve\nabla c_s^\ve \cdot\n =\frac 1 \ve \hat D_s^\ve\nabla \hat c_s^\ve \cdot\n  &
 \text{on } (\partial\Omega_s^\ve\cap \hat \Lambda)\times (0,T),\qquad
\end{cases}
\end{eqnarray}
where $l=a,v$. We remark that the $\ve^{-1}$ scaling in \eqref{Diff_TransC} balances the oxygen flux terms in the skin layer with the depth of the layer.  

At the external boundaries we consider  Dirichlet boundary conditions that define the prescribed oxygen concentration at the arterial/venous blood vessel boundaries  and zero-flux boundary conditions at the tissue boundaries: 
\begin{eqnarray}\label{Diff_BC}
\begin{cases}
c_l^\ve= c_{l, D}  & \quad \text{ on }  (\Gamma_D \cap \partial \Omega_l^\ve)\times (0,T),   \quad \text{for }  l=a,v, \medskip\\
D_l^\ve \nabla c^\ve_l \cdot \n =0 & \quad \text{ on }  \big((\partial \hat \Omega\times(-L,0))\cap \partial \Omega^\ve_l\big)\times (0,T),  \quad \text{for }  l=a,v,\medskip\\
D_s^\ve\nabla c_s^\ve \cdot \n =0 & \quad \text{ on } \big(\Gamma_D\cup (\partial \hat \Omega \times (-L,0) ) \cap \partial \Omega_s^\ve\big)\times  (0,T), \medskip\\
 \hat D_l^\ve\nabla \hat c_l^\ve  \cdot \n =0  & \quad \text{ on }  ((\hat\Omega\times\{\ve\}\cup \partial\hat \Omega\times(0,\ve))  \cap \partial \Lambda_l^\ve)\times(0,T),  \quad \text{for }  l=a,v, s. 
 \end{cases}
\end{eqnarray}
The initial conditions for the oxygen concentrations are given by 
\begin{equation}\label{Diff_InitC}
c^\ve_l(0,x) = c^0_{l}(x) \quad \text{ in } \Omega_l^\ve, \quad \;  \hat c^\ve_l(0,x) =  \hat c^{\ve,0}_{l}(x) \quad \text{ in } \Lambda^\ve_l,  \quad  \text{ where } \; \;  l=a,v,s.
\end{equation} 

In the following, we make use of the notation   $\Omega_T=\Omega\times (0,T)$,  $\Omega^\ve_{l,T}=\Omega_l^\ve\times (0,T)$, $\Gamma_{D,T} = \Gamma_D\times (0,T)$, $\partial \Omega_T= \partial\Omega\times(0,T)$, 
and $\Lambda^\ve_{l,T}=\Lambda_l^\ve\times (0,T)$  for $l=a,v,s$. We also use the notation
$\hat\Lambda_T= \hat\Lambda\times (0,T)$, $\partial \hat \Lambda_T = \partial \hat \Lambda\times (0,T)$, and $\hat Z=Z\cap\{x_n=0\}$.
The diffusion coefficients $D^\ve_l$, $\hat D^\ve_l$ and the oxygen degradation rates $d^\ve_s$, $\hat d_s^\ve$ are defined by $Y$-periodic and $\hat Z$-periodic functions $D_l$, $d_s$ and $\hat D_l$, $\hat d_s$, respectively. Specifically,  
$$D^\ve_l(x)= D_l(x/\ve),  \hat D^\ve_l(x)= \hat D_l(x/\ve),  d^\ve_s(t,x)= d_s(t,x/\ve), \mbox{and } \hat d^\ve_s(t,x)= \hat d_s(t,x/\ve),$$ 
for a.a. $t\geq 0$, $x\in \Omega$,  $x \in \Lambda^\ve$, and $l=a,v,s$. Finally, the following assumption is made throughout the paper.
\begin{assumption}\label{assumption}
The following hold:
\begin{enumerate}
\item[(i)] The diffusion coefficients $D_l\in L^\infty(Y)$, $\hat D_l \in L^\infty(Z) $ are uniformly elliptic, i.e., 
$(D_l(y) \xi, \xi)\geq D_0|\xi|^2$, \, $(\hat D_l(z) \xi, \xi)\geq \hat D_0|\xi|^2$ for all $\xi \in \mathbb R^n$ and a.a.
$y\in Y$ and $z\in Z$, where $l=a,v,s$, and $D_0 >0$, $\hat D_0>0$.

\item[(ii)] It is assumed that $d_s, \, \partial_t d_s\in L^\infty((0,T)\times Y)$ and $\hat d_s, \, \partial_t \hat d_s\in L^\infty((0,T)\times Z)$. 

\item[(iii)] With respect to the initial conditions, it is assumed that  $c_{l}^0 \in  H^2(\Omega)\cap L^\infty(\Omega)$,  
$\hat c^{\ve,0}_{l} \in H^2(\Lambda^\ve)\cap L^\infty(\Lambda^\ve)$, $c_{l}^0(x) \geq 0$  for $x \in \Omega$,   $\hat c^{\ve,0}_{l}(x) \geq 0$  for $x \in \Lambda^\ve$, $l=a,v,s$,  $\hat c^{\ve,0}_a = \hat c^{\ve,0}_v= \hat c^{\ve,0}$, and $c_{l}^0(x) = c_{l,D}(0,x)$ on $\Gamma_D$, where $l=a,v$. Moreover,  
\begin{eqnarray*}
&& \ve^{-1} \|\hat c^{\ve,0}_{l} \|^2_{H^2(\Lambda^\ve)}  \leq C,\qquad \|\hat c^{\ve,0}_{l} \|_{L^\infty(\Lambda^\ve)} \leq C, \\ 
&&c_{l}^0(x) =  \hat c^{\ve,0}_{l}(x), \quad 
D_l^\ve(x) \nabla c_{l}^0(x) \cdot \n =  \frac 1 \ve \hat D_l^\ve(x) \nabla \hat c^{\ve,0}_{l}(x)\cdot \n \, \,  \text{ on } \, \,  \partial\Omega^\ve_l\cap \hat \Lambda.
\end{eqnarray*}

\item[(iv)] It is assumed that the boundary conditions for the oxygen concentration in arteries and veins satisfy $c_{l,D} \in H^1(0,T; H^2(\Omega))\cap L^\infty(\Omega_T)$, $\partial_t c_{l,D} \in L^\infty(\Omega_T)\cap H^1(0,T; L^2(\Omega))$,  $c_{l,D}(t,x) \geq 0$ a.e. in $\Omega_T$, and $c_{l,D}(t,x) = 0$ on $\hat \Lambda_T$, for $l=a,v$.

\item[(v)] Finally, it is assumed that   $\mu >0$, $\lambda_l> 0$, $\hat \lambda_l >0$, and     $p_l^0 > 0$  for $l=a,v$.
\end{enumerate}
\end{assumption}

\section{Weak solutions and functional spaces}\label{weak_sol}

The microscopic system under consideration consists of equations \eqref{Stokes_Omega}--\eqref{Stokes_ContC} 
 for the blood velocity fields and pressures in arteries and veins, and equations
   \eqref{Diff_AV}--\eqref{Diff_InitC} for the oxygen concentrations in arteries, veins, and tissue. We now define a notion of weak solution for the system of equations \eqref{Stokes_Omega}--\eqref{Diff_InitC} and  the functional spaces that are used in this paper. We start by defining the spaces 
\begin{eqnarray*}
V(\Omega_l^\ve)&=&\big \{v\in  H^1(\Omega_l^\ve), \quad v\times \n = 0 \,  \text{ on }  \Gamma_D\cap \partial \Omega_l^\ve,  \\
&&  \hspace{2.7 cm }  v=0  \, \text{ on }  \Gamma_l^\ve\cup (\partial \hat \Omega \times (-L,0)\cap \partial\Omega_l^\ve) \big\}, \\
\hat V(\Lambda_l^\ve)&=& \{v\in  H^1(\Lambda_l^\ve), \quad v = 0 \,  \text{ on }    R_l^\ve \text{ and }  ((\partial \hat \Omega \times (0,\ve))\cup(\hat \Omega\times\{\ve\}))\cap \partial\Lambda_l^\ve  \}, \\
W(\Omega_l^\ve) &= &\{ w\in H^1(\Omega_l^\ve), \; \;  w=0 \, \text{ on } \Gamma_D\cap\partial\Omega_l^\ve \},
\\
 W(\Omega) &=& \{ w\in H^1(\Omega), \quad w=0 \, \text{ on } \Gamma_D \},\;\;\\
V_d(\Omega_l^\ve)&=& \{v\in   V(\Omega_l^\ve), \; \dv v =0 \}, \quad \qquad 
\hat V_d(\Lambda_l^\ve)=\{ v \in  \hat V(\Lambda_l^\ve), \;  \dv v =0 \},
\end{eqnarray*}
where $l=a,v$. For $\phi, \psi \in L^2((0,\sigma)\times \Omega)$ we make use of the notation
$$\langle \phi, \psi \rangle_{\Omega, \sigma} = \int_0^\sigma\int_{\Omega} \phi \psi \, dx dt.$$

In the remainder of the paper we make use of the auxiliary variable $\tilde{p}_l^\ve$ instead of  $p_l^\ve$, where
$$\tilde{p}_l^\ve(x)\,=\,p_l^\ve(x) +\frac { x_n} L p_l^0 \,\mbox{ in }\, \Omega_l^\ve,$$ $l=a,v$. The introduction of 
$\tilde{p}_l^\ve$ allows us to focus on zero Dirichlet boundary conditions for the pressure. Also, for the sake of notational simplicity, in what follows we omit the tilde $\sim$ and write $p_l^\ve$ instead of $\tilde{p}_l^\ve$.  
We remark  that the use of $\v^\ve_l\times \n=0$ on $\Gamma_D\cap \partial \Omega^\ve_l$ and $\dv\, \v^\ve_l =0 \text{ in } \,  \Omega^\ve_l$, along with the fact that $\Gamma_D$ is a flat boundary, lead to  $\partial_{x_n} \v^\ve_l\cdot \n =0$ and, hence,  $\langle \S\v^\ve_l \cdot \n, \phi_l \rangle_{\Gamma_D\cap \partial \Omega^\ve_l}=0$ for  $\v^\ve_l \in V_d(\Omega^\ve_l)$ and $\phi_l \in V(\Omega^\ve_l)$, where $l=a,v$. 

We are interested in the existence of weak solutions to the system of equations \eqref{Stokes_Omega}--\eqref{Diff_InitC}.

\begin{definition}
A weak solution of the problem \eqref{Stokes_Omega}--\eqref{Stokes_ContC} consists of  functions $\v^\ve_l \in  V_d(\Omega_l^\ve)$, $p_l^\ve \in L^2(\Omega_l^\ve)$, 
  $\hat \v^\ve_l \in \hat V_d(\Lambda_l^\ve)$, and $\hat p_l^\ve \in L^2(\Lambda_l^\ve)$, $\l=a,v$, that satisfy the equation
 \begin{equation}\label{micro_weak_flow}
 \begin{aligned} 
  &\sum_{l=a,v} \Big[\langle  2 \mu \ve^2 \S \v^\ve_l, \S \phi_l \rangle_{\Omega_l^\ve} -\langle  p_l^\ve,  \dv \,\phi_l \rangle_{\Omega_l^\ve} -  \frac 1{L} \langle  p_l^0, \phi_{l,n}  \rangle_{\Omega_l^\ve} \Big]\\
&+ \frac 1\ve\sum_{l=a,v} \Big[ \langle  2 \mu \ve^2  \S  \hat \v^\ve_l,  \S \hat \phi_l \rangle_{\Lambda^\ve_l}
-   \langle \hat p_l^\ve , \dv \, \hat\phi_l \rangle_{\Lambda^\ve_l} \Big]=0
\end{aligned}
\end{equation}
for all $\phi_l \in V(\Omega_l^\ve)$ and $\hat \phi_l \in \hat V(\Lambda_l^\ve)$ with $\phi_l= \frac 1 \ve \hat\phi_l$ on $\hat \Lambda\cap \partial \Omega_l^\ve$
  and $\hat \phi_a =\hat \phi_v$ on $\Sigma^\ve$.

A weak solution of the problem  \eqref{Diff_AV}--\eqref{Diff_InitC}  consists of  functions $c^\ve_l -  c_{l,D}\in L^2(0,T; W(\Omega_l^\ve))$,    $\partial_t c^\ve_l \in L^2(\Omega_{l,T}^\ve)$, $c_s^\ve \in L^2(0,T; H^1(\Omega_s^\ve))$,  
$\hat c_l^\ve \in L^2(0,T; H^1(\Lambda^\ve_l))\cap H^1(0,T; L^2(\Lambda_l^\ve))$,  $c^\ve_l \in L^\infty(\Omega^\ve_{l,T})$, and $\hat c^\ve_l \in L^\infty(\Lambda^\ve_{l,T})$,  $l=a,v,s$, which satisfy the equations
\begin{eqnarray}\label{micro_weak_av}
&\sum_{l=a,v} \Big[\langle \partial_t c_l^\ve,  \psi_l \rangle_{\Omega_l^\ve, T} +\langle D_l^\ve \nabla c_l^\ve - \v_l^\ve c_l^\ve, \nabla \psi_l \rangle_{\Omega_l^\ve, T} - \ve \langle \lambda_l (c_s^\ve- c_l^\ve),  \psi_l \rangle_{\Gamma_l^{\ve}, T}\Big] \\
&+  
\dfrac 1 \ve\sum_{l=a,v}  \Big[ \langle \partial_t  \hat c_l^\ve , \hat \psi_l \rangle_{\Lambda_l^\ve, T}   +
 \langle   \hat D_l^\ve\nabla  \hat c_l^\ve -   \hat\v_l^\ve   \hat c_l^\ve, \nabla \hat\psi_l \rangle_{\Lambda_l^\ve, T}   -\ve  \langle\hat \lambda_l(\hat c_s^\ve- \hat c_l^\ve), \hat \psi_l \rangle_{R_l^{\ve}, T}\Big]  =0  \nonumber
\end{eqnarray}
for all $\psi_l \in  L^2(0,T; W(\Omega_l^\ve) ) $ and $\hat \psi_l \in L^2(0,T;  H^1(\Lambda^\ve_l))$ with  $\psi_l= \hat \psi_l$ on $(\hat\Lambda\cap\partial \Omega_l^\ve)\times (0,T)$ and $\hat \psi_a=\hat \psi_v$ on $\Sigma^\ve\times (0,T)$, and
\begin{equation}\label{micro_weak_tissue}
\begin{aligned}
 & \langle\partial_t c_s^\ve,  \psi_s \rangle_{\Omega_s^\ve, T} + \langle D_s^\ve\nabla c_s^\ve,  \nabla \psi_s \rangle_{\Omega_s^\ve, T}  + \langle d_s^\ve c_s^\ve,  \psi_s \rangle_{\Omega_s^\ve, T}    \\
  +  
\frac 1 \ve & \Big[ \langle \partial_t \hat c_s^\ve,  \hat \psi_s \rangle_{\Lambda_s^\ve, T}+ \langle \hat D_s^\ve\nabla \hat c_s^\ve,  \nabla \hat \psi_s \rangle_{\Lambda_s^\ve, T}+
\langle  \hat d_s^\ve  \hat c_s^\ve, \hat \psi_s \rangle_{\Lambda_s^\ve, T}\Big]\\
=& \ve\sum_{l=a,v} \langle \lambda_l(c_l^\ve- c_s^\ve),  \psi_s  \rangle_{\Gamma_l^{\ve}, T} + 
  \sum_{l=a,v} \langle \hat \lambda_l (\hat c_l^\ve- \hat c_s^\ve), \hat \psi_s \rangle_{R_l^{\ve}, T},
\end{aligned}
\end{equation}
for all $\psi_s \in L^2(0,T; H^1(\Omega_s^\ve)) $ and $\hat \psi_s \in L^2(0,T; H^1(\Lambda^\ve_s))$  with $\psi_s=  \hat \psi_s$ on $(\hat\Lambda\cap\partial \Omega_s^\ve)\times(0,T)$, 
and  $c_l^\ve\to c^0_{l}$ in  $L^2(\Omega_l^\ve)$, $\hat c_l^\ve\to \hat c^{\ve,0}_{l}$ in  $L^2(\Lambda_l^\ve)$ as $t \to 0$, for $l=a,v,s$.
\end{definition}

\begin{theorem}
For each $\ve>0$  there exists a unique weak solution of the microscopic model  \eqref{Stokes_Omega}--\eqref{Diff_InitC}.
\end{theorem}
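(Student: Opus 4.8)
The plan is to exploit the one-way coupling in the system: the Stokes problem \eqref{Stokes_Omega}--\eqref{Stokes_ContC} does not involve the oxygen concentrations, so I would first establish existence and uniqueness of the flow variables $(\v^\ve_l, p^\ve_l, \hat\v^\ve_l, \hat p^\ve_l)$, and then, treating the velocity fields as given data, solve the linear parabolic system \eqref{Diff_AV}--\eqref{Diff_InitC} for the concentrations. Throughout, $\ve>0$ is fixed, so all constants may depend on $\ve$; the $\ve$-uniform bounds are the subject of the later \emph{a priori} estimates and are not needed here.

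For the flow problem, I would work on the closed subspace $\mathcal V \subset V_d(\Omega^\ve_a)\times V_d(\Omega^\ve_v)\times \hat V_d(\Lambda^\ve_a)\times\hat V_d(\Lambda^\ve_v)$ cut out by the transmission constraint $\phi_l = \tfrac1\ve\hat\phi_l$ on $\hat\Lambda\cap\partial\Omega^\ve_l$ and the continuity constraint $\hat\phi_a=\hat\phi_v$ on $\Sigma^\ve$. On $\mathcal V$ the pressure terms in \eqref{micro_weak_flow} drop out, and the remaining bilinear form $a((\v,\hat\v),(\phi,\hat\phi))=\sum_l\langle 2\mu\ve^2\S\v_l,\S\phi_l\rangle_{\Omega^\ve_l}+\tfrac1\ve\sum_l\langle 2\mu\ve^2\S\hat\v_l,\S\hat\phi_l\rangle_{\Lambda^\ve_l}$ is symmetric, bounded, and---by Korn's inequality combined with the Poincar\'e inequality, available because the velocities vanish on a part of the boundary of positive measure---coercive on $\mathcal V$. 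Lax--Milgram (equivalently, minimization of the associated energy) then yields a unique divergence-free velocity. The pressures $p^\ve_l$ and $\hat p^\ve_l$ are recovered by a de Rham type (inf-sup / LBB) argument: the residual functional vanishes on $\mathcal V$, hence is represented by functions dual to the divergence, and the stress transmission condition in \eqref{Stokes_TransC} together with the force continuity in \eqref{Stokes_ContC} emerge as the associated natural boundary conditions.

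For the oxygen transport, with the now-known velocities inserted, I would set up a Gelfand triple with $H=L^2$ and $V$ the constrained $H^1$-space (imposing $\psi_l=\hat\psi_l$ on $\hat\Lambda$ and $\hat\psi_a=\hat\psi_v$ on $\Sigma^\ve$), and construct solutions by a Galerkin scheme. The key estimate comes from testing with the solution itself: since $\dv\v^\ve_l=0$, the convective contribution $\langle \v^\ve_l c_l,\nabla c_l\rangle$ reduces to boundary terms controlled by the prescribed data, while the exchange terms $\lambda_l(c_s-c_l)$, the decay term, and the transmission fluxes combine with the correct signs, so that Gr\"onwall's inequality gives bounds uniform in the Galerkin dimension. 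Passing to the limit yields a weak solution, and testing with $\partial_t c$ (after homogenizing the Dirichlet data via $c_{l,D}$ and using Assumption~\ref{assumption}) gives the regularity $\partial_t c^\ve_l\in L^2$. Nonnegativity and the $L^\infty$ bounds required in the definition follow from a Stampacchia truncation argument: testing with $(c-M)^+$ and with the negative parts, and using the boundedness of the data in Assumption~\ref{assumption}(iii)--(iv) together with the order-preserving structure of the linear reaction terms, confines each concentration to a fixed interval. Uniqueness is immediate from linearity: the difference of two solutions satisfies the homogeneous system, and the same energy estimate plus Gr\"onwall forces it to vanish.

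I expect the main obstacle to be the pressure recovery in the coupled geometry: because the admissible velocities link the fixed-size domain $\Omega$ and the thin layer $\Lambda^\ve$ through the scaled transmission condition $\phi_l=\tfrac1\ve\hat\phi_l$, verifying the surjectivity-of-divergence (inf-sup) property that produces the pair $(p^\ve_l,\hat p^\ve_l)$ consistently with the stress transmission condition requires care in constructing suitable velocity lifts across $\hat\Lambda$. The Korn inequality on this nonstandard constrained space is the other point needing attention, although for fixed $\ve$ it reduces to the standard Korn inequality applied separately on each Lipschitz subdomain.
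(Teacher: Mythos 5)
Your proposal is correct and follows essentially the same route as the paper, whose own proof is only a two-sentence sketch invoking the \emph{a priori} estimates of Lemma~\ref{Lemma:aprior} together with standard well-posedness theory for the Stokes and parabolic problems; your decoupling (Stokes first via Lax--Milgram on the constrained divergence-free space with de Rham pressure recovery, then the linear transport system via Galerkin, Gr\"onwall, and truncation) is precisely what those citations stand for, including the paper's remark that the Dirichlet pressure condition on $\Gamma_D$ fixes the pressure uniquely, which your inf-sup step addresses.
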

\begin{proof}[Sketch of proof]
A priori estimates similar to those shown below  in Lemma~\ref{Lemma:aprior}, along with well-known results on the well-posedness of the Stokes equations and parabolic systems, ensure the existence and uniqueness of a solution to the system \eqref{Stokes_Omega}--\eqref{Diff_InitC}. We remark that the  Dirichlet  boundary conditions for the pressure on the boundary $\Gamma_D$, see \eqref{Stokes_BC_O},  ensure the uniqueness of the pressure. 
\end{proof}


\section{A priori estimates and convergence results}\label{section4}

We now turn our attention to deriving \textit{a priori} estimates for the weak solutions of the microscopic model \eqref{Stokes_Omega}--\eqref{Diff_InitC}. 
The {\it a priori} estimates are then used in conjunction with the notion of two-scale convergence and an  unfolding operator approach to establish the convergence of the solutions as $\ve\rightarrow 0$.  

\begin{lemma}\label{Lemma:aprior}
Under Assumption \ref{assumption} the  solutions   of the problem \eqref{Stokes_Omega}--\eqref{Stokes_ContC}   satisfy the  a priori estimates
\begin{eqnarray}\label{apriori}
\begin{aligned}
&& \|\v^\ve_l\|_{L^2(\Omega_l^\ve)} + \ve \, \|\nabla \v^\ve_l\|_{ L^2(\Omega_l^\ve)} +
\frac 1{\sqrt{\ve}} \, \|\hat \v^\ve_l\|_{L^2(\Lambda_l^\ve)} + \sqrt{\ve} \,  \|\nabla \hat \v^\ve_l\|_{L^2(\Lambda_l^\ve)} \leq C ,
\end{aligned}
\end{eqnarray}
where $l=a,v$. Moreover, there exist extensions $P^\ve_a$, $P_v^\ve$ and $\hat P^\ve$ of $p_a^\ve$, $p_v^\ve$ and  $\hat p^\ve= \hat p_a^\ve \chi_{\Lambda_a^\ve} + \hat p_v^\ve \chi_{\Lambda_v^\ve}$ respectively, such that 
\begin{eqnarray}\label{apriori_extension}
\|P^\ve_a \|_{L^2(\Omega)} + \|P_v^\ve\|_{L^2(\Omega)} +  \frac 1{\sqrt{\ve}}\|\hat P^\ve \|_{L^2(\Lambda^\ve)} \leq C.
\end{eqnarray}
Finally, the  solutions  of the problem  \eqref{Diff_AV}--\eqref{Diff_InitC}, i.e. the oxygen concentrations in arteries, veins, and tissue, satisfy the estimates
\begin{equation}\label{apriori2}
\begin{aligned}
& \|c^\ve_l\|_{L^\infty(0,T; L^2(\Omega_l^\ve))} + \|\nabla c^\ve_l\|_{L^2((0,T)\times\Omega_l^\ve)} \leq C, \\
& \frac 1{\sqrt{\ve}}  \|\hat c^\ve_l\|_{L^\infty(0,T; L^2(\Lambda_l^\ve))} + \frac 1{\sqrt{\ve}}   \|\nabla \hat c^\ve_l\|_{L^2((0,T)\times\Lambda_l^\ve)} \leq C , \\
 & c^\ve_l(t,x) \geq 0 \text{ a.e. in } \Omega^\ve_{l, T},  \; \; \;  \hat c^\ve_l(t,x) \geq 0 \text{ a.e. in }  \Lambda^\ve_{l,T},  \; \; \;   \\
 & \| c^\ve_l \|_{L^\infty(\Omega_{l,T}^\ve)}  +  \|\hat c^\ve_l\|_{L^\infty(\Lambda_{l,T}^\ve)} \leq C ,\\
&  \|\partial_t c^\ve_l\|_{L^\infty(0,T; L^2(\Omega_l^\ve))} + \| \partial_t\nabla c^\ve_l\|_{L^2((0,T)\times\Omega_l^\ve))} \leq C, \\
&
 \frac 1 {\sqrt{\ve}} \|\partial_t \hat c^\ve_l\|_{L^\infty(0,T; L^2(\Lambda_l^\ve))} +  \frac 1 {\sqrt{\ve}}  \| \partial_t\nabla \hat c^\ve_l\|_{L^2((0,T)\times\Lambda_l^\ve))} \leq C, 
\end{aligned}
\end{equation}
where $l=a,v,s$. Here the  constant $C$ is independent of $\ve$. 
\end{lemma}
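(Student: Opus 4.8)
The plan is to derive the energy estimates for the Stokes system first, then use a pressure-extension argument, and finally handle the oxygen equations by a combination of energy estimates, a positivity (maximum-principle) argument, and differentiation in time.

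For the velocity estimates \eqref{apriori}, I would test the weak formulation \eqref{micro_weak_flow} with the solution itself, i.e. set $\phi_l = \v_l^\ve$ and $\hat\phi_l = \hat\v_l^\ve$ (which is admissible since the pair satisfies the coupling $\phi_l = \frac1\ve\hat\phi_l$ on $\hat\Lambda$ and $\hat\phi_a=\hat\phi_v$ on $\Sigma^\ve$). The divergence-free condition kills the pressure terms $\langle p_l^\ve,\dv\phi_l\rangle$ and $\langle\hat p_l^\ve,\dv\hat\phi_l\rangle$, leaving the symmetric-gradient (coercive) part against the single forcing term $\frac1L\langle p_l^0,\phi_{l,n}\rangle$. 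Korn's inequality converts $\|\S\v_l^\ve\|_{L^2}$ control into $\|\nabla\v_l^\ve\|_{L^2}$ control; the scaled Poincar\'e inequality in the $\ve$-thin layer $\Lambda_l^\ve$ (using the homogeneous boundary data on $R_l^\ve$ and $\hat\Omega\times\{\ve\}$) produces the characteristic $\ve$-weights, giving $\|\hat\v_l^\ve\|_{L^2}\le C\ve\|\nabla\hat\v_l^\ve\|_{L^2}$ and hence the $\ve^{-1/2}$ and $\ve^{1/2}$ scalings in \eqref{apriori}. The forcing is then absorbed by Young's inequality.

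For the pressure bound \eqref{apriori_extension}, I would use the standard Tartar/Allaire construction of a bounded extension operator from the perforated domain $\Omega_l^\ve$ to all of $\Omega$ together with the associated uniform bound on the Bogovskii-type right inverse of the divergence; the key fact is that $\|p_l^\ve\|_{L^2(\Omega_l^\ve)/\R}$ is controlled by $\sup_\phi \langle p_l^\ve,\dv\phi\rangle / \|\nabla\phi\|$, and the weak form bounds the latter by the already-estimated viscous term. The Dirichlet condition $p_l^\ve=p_l^0$ on $\Gamma_D$ (noted in the existence sketch) pins down the additive constant, so no quotient ambiguity survives. The $\ve^{-1/2}$ weight on $\hat P^\ve$ tracks the thin-layer scaling exactly as for the velocity.

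For the oxygen estimates \eqref{apriori2} I would proceed in four stages. \emph{(a)} $L^\infty(L^2)$ and $L^2(H^1)$ bounds: test \eqref{micro_weak_av}--\eqref{micro_weak_tissue} with $c_l^\ve-c_{l,D}$ and $c_s^\ve$; the convective terms $\langle\v_l^\ve c_l^\ve,\nabla\psi_l\rangle$ are controlled using $\dv\v_l^\ve=0$ (integration by parts moves the derivative off $c_l^\ve$) together with the already-established velocity bounds, and the ellipticity from Assumption~\ref{assumption}(i) supplies coercivity; the boundary reaction terms telescope when the arterial/venous and tissue equations are summed, so the $\ve$-weighted surface integrals are handled by a trace inequality scaled to the microstructure. \emph{(b)} Nonnegativity: test with the negative part $(c_l^\ve)^-$ and use that the reaction couplings preserve sign, giving $c_l^\ve,\hat c_l^\ve\ge0$. \emph{(c)} The $L^\infty$ bound: a Stampacchia/Moser iteration or, more cheaply, a comparison with a spatially constant supersolution built from the Dirichlet data $c_{l,D}$ and the bounded degradation rate $d_s$ (Assumption~\ref{assumption}(ii),(iv)). \emph{(d)} Time-derivative bounds: differentiate the equations in $t$ (legitimate by the $H^1(0,T;\cdot)$ regularity assumed on the data in Assumption~\ref{assumption}(iii),(iv) and on $d_s$), test with $\partial_t c_l^\ve$ and $\partial_t c_s^\ve$, and repeat the energy argument; the initial-data compatibility in Assumption~\ref{assumption}(iii) is exactly what makes $\partial_t c_l^\ve(0,\cdot)$ bounded uniformly in $\ve$. \textbf{The main obstacle} I expect is keeping every constant genuinely independent of $\ve$ through the thin-layer trace and Poincar\'e inequalities and through the telescoping of the $\frac1\ve$-scaled transmission terms on $\hat\Lambda$: the $\ve^{-1}$ factors in \eqref{micro_weak_av}--\eqref{micro_weak_tissue} must cancel precisely against the $\ve$-weights produced by the thin-domain geometry, and verifying this cancellation uniformly — rather than losing a power of $\ve$ — is the delicate bookkeeping at the heart of the estimate.
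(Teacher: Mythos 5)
Your proposal is correct and follows essentially the same route as the paper: testing the Stokes weak form with the solution pair itself (admissible by the transmission and continuity conditions), Korn plus scaled thin-layer Poincar\'e inequalities for \eqref{apriori}; the Tartar/Allaire restriction-operator duality with a Ne\v{c}as-type inequality for the pressure extensions \eqref{apriori_extension}; and for \eqref{apriori2} energy estimates with $c_l^\ve-c_{l,D}$ using the divergence-free structure and the $\ve$-scaled trace inequality, negative-part test functions for nonnegativity, the truncation $(c_l^\ve-A)^+$ (your constant-supersolution comparison) for boundedness, and time differentiation for the $\partial_t$ bounds. The cancellation of the $\ve^{-1}$-weighted transmission terms on $\hat\Lambda$ that you flag as the main obstacle is precisely the mechanism the paper exploits in its estimate \eqref{eq21}.
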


\begin{proof}
Using $\v^\ve_l=0$ on $\Gamma_l^\ve$ and $\big(\partial\hat \Omega\times(-L,0)\big)\cap \partial \Omega_l^\ve$,  and  $\hat \v^\ve_l=0$ 
on $R^\ve_l$ and  $\big(\partial\hat \Omega\times(0,\ve) \cup \hat \Omega\times\{\ve\}\big)\cap \partial \Lambda_l^\ve$, and applying Poincar\'e's and Korn's inequalities \cite{Allaire:1989,Arbogast:2006, JaegerHornung:1991,Tartar:1980}, we obtain
\begin{eqnarray}\label{Poincare}
\begin{aligned}
& \|\v^\ve \|^2_{L^2(\Omega_l^\ve)} +\ve^2 \|\nabla\v^\ve \|^2_{L^2(\Omega_l^\ve)}  \leq C \ve^2  \| \S \v^\ve \|^2_{ L^2(\Omega_l^\ve)}, \\
& \|\hat \v^\ve \|^2_{L^2(\Lambda_l^\ve)}  +\ve^2 \|\nabla\hat \v^\ve \|^2_{L^2(\Lambda_l^\ve)} \leq C \ve^2  \| \S \hat \v^\ve \|^2_{ L^2(\Lambda_l^\ve)},
\end{aligned}
\end{eqnarray}
with a constant $C$ independent of $\ve$.  Considering  $\v_l^\ve$ and $\hat \v_l^\ve$, where $l=a,v$, as test functions in the weak formulation  \eqref{micro_weak_flow}, using the divergence-free property  of the blood  velocity fields, and applying \eqref{Poincare} imply the  estimates in \eqref{apriori}.

Due to the continuity conditions on $\Sigma^\ve$ we   can define $\hat p^\ve= \hat p_a^\ve \chi_{\Lambda_a^\ve} + \hat p_v^\ve \chi_{\Lambda_v^\ve}$.  As  in \cite{Allaire:1989} we can construct a restriction operator, which is a linear continuous operator  $\mathcal R^\ve_l: H^1_0(\Omega) \to H^1_0(\Omega^\ve_l)$ such that 
\begin{itemize}
\item[(i)] $u \in H^1_0(\Omega^\ve_l)$ implies $\mathcal R^\ve_l \tilde u = u$  in $\Omega_l^\ve$,  where $\tilde u$ is an extension of $u$ by zero in~$\Omega$.
\item[(ii)]   $\text{div } u =0$ in $\Omega$ implies $\text{div} (\mathcal R^\ve_l u) =0$ in $\Omega_l^\ve$.
\item[(iii)]  For each $u \in H^1_0(\Omega)$ the following estimate holds
$$
\| \mathcal R^\ve_l u \|_{L^2(\Omega^\ve_l)} + \ve \|\nabla \mathcal R^\ve_l u \|_{L^2(\Omega^\ve_l)} \leq 
C \left[\| u \|_{L^2(\Omega)} + \ve \|\nabla u \|_{L^2(\Omega)} \right]
$$
\end{itemize}
with the constant $C$ being independent of $\ve$. 
A similar restriction operator can be defined  for $\Lambda^\ve=\hat\Omega\times(0,\ve)$ as a linear continuous operator $\mathcal{\hat R}^\ve: H^1_0(\Lambda^\ve) \to H^1_0(\Lambda^\ve_{av})$, where $\Lambda_{av}^\ve= \Lambda_a^\ve\cup \Sigma^\ve\cup \Lambda_v^\ve$.
Using the  properties of $\mathcal R^\ve_l$ and $ \mathcal{\hat R}^\ve$, where $l=a,v$,  we can extend $p_l^\ve$ from $\Omega_l^\ve$ into $\Omega$, and $\hat p^\ve$ from  $\Lambda_{av}^\ve$ into $\Lambda^\ve$. These extensions satisfy the {\it a priori} estimates in \eqref{apriori_extension} (see e.g.,  \cite{Allaire:1989}).   In particular, for the construction of the extension of $\hat p^\ve$, we consider a  linear functional $F^\ve$ in $H^{-1}(\Lambda^\ve)$ defined as 
$$
  \langle  F^\ve,  \psi \rangle_{H^{-1}, H^1_0(\Lambda^\ve)} = \langle  \nabla \hat p^\ve,  \mathcal{\hat R}^\ve  \psi \rangle_{H^{-1}, H^1_0(\Lambda^\ve_{av})} \quad \text{ for } \psi \in H^1_0(\Lambda^\ve) ,
$$ 
Using  equation \eqref{Stokes_Lambda},  the properties of the restriction operator $\mathcal{\hat R}^\ve$  and the estimates in \eqref{apriori}  we obtain 
\begin{eqnarray*}
&& \langle  F^\ve,  \psi \rangle_{H^{-1}, H^1_0(\Lambda^\ve)} =  \langle \ve^2 \mu\Delta \hat \v^\ve_{av}, \mathcal{\hat R}^\ve  \psi \rangle_{H^{-1}, H^1_0(\Lambda^\ve_{av})} 
= - \langle \ve^2 \mu\nabla \hat \v^\ve_{av}, \nabla \mathcal{\hat R}^\ve   \psi  \rangle_{\Lambda^\ve_{av}}, \\
&& \left|\langle  F^\ve,  \psi \rangle_{H^{-1}, H^1_0(\Lambda^\ve)}\right| \leq 
C_1  \sqrt{\ve}\left[\| \psi \|_{L^2(\Lambda^\ve)} + \ve \|\nabla \psi \|_{L^2(\Lambda^\ve)} \right]\leq 
C_2\ve \sqrt{\ve} \|\nabla \psi \|_{L^2(\Lambda^\ve)}, 
\end{eqnarray*}
where $\hat \v^\ve_{av} = \hat \v_a^\ve \chi_{\Lambda_a^\ve} + \hat \v_v^\ve \chi_{\Lambda_v^\ve}$.  Thus 
$$
\frac 1{\sqrt{\ve}} \| F^\ve\|_{H^{-1}(\Lambda^\ve)}  \leq C \ve. 
$$
Additionally, we have   
$\langle  F^\ve,  \psi \rangle_{H^{-1}, H^1_0(\Lambda^\ve)} =0$
for all  $\psi \in H^1_0(\Lambda^\ve)$ with $\dv \, \psi  =0$ in $\Lambda^\ve$. Hence, there exists $\hat P^\ve \in L^2(\Lambda^\ve)/ \mathbb R$ such that $ F^\ve = \nabla \hat P^\ve$ and, using the Ne{\u c}as inequality \cite{Paloka:2000},
$$
\frac 1 {\sqrt{\ve}}\| \hat P^\ve \|_{L^2(\Lambda^\ve)/ \mathbb R}\leq \frac 1{\sqrt{\ve}} \frac{C_1}{\ve} \| F^\ve \|_{H^{-1}(\Lambda^\ve)} \leq C_2.  
$$
In the same way as in \cite{Allaire:1989} we obtain that $\hat P^\ve$ is an extension of $\hat p^\ve$. The fact that $\hat p^\ve$ is uniquely  defined implies that $\hat P^\ve \in L^2(\Lambda^\ve)$.

Using that $c^\ve_l- c_{l,D}=0$
on $\Gamma_D\cap \partial\Omega_l^\ve$ and $c_{l,D}=0$ on $\hat \Lambda$, in conjunction with (a) the divergence-free property of $\v_l^\ve$ and $\hat\v_l^\ve$, (b) the zero-boundary conditions for $\v_l^\ve$ and $\hat\v_l^\ve$, and (c)
 the continuity of concentrations on $\hat \Lambda\cap \partial \Lambda_l^\ve$, we obtain 
\begin{equation}\label{eq21}
\begin{aligned}
\langle \v_l^\ve c_l^\ve, \nabla (c_l^\ve- c_{l,D}) \rangle_{\Omega^\ve_l} +
 \frac 1 \ve \langle  \hat \v_l^\ve \hat c_l^\ve, \nabla \hat c_l^\ve \rangle_{\Lambda^\ve_l} =
 \langle \v_l^\ve c_{l,D}, \nabla (c_l^\ve- c_{l,D}) \rangle_{\Omega^\ve_l}\\ +
 \frac 12 \langle \v_l^\ve \cdot \n, |c_l^\ve|^2 \rangle_{\hat\Lambda\cap \partial \Lambda_l^\ve} - \frac 1{2\ve} \langle \hat \v_l^\ve \cdot \n, |\hat c_l^\ve|^2 \rangle_{\hat\Lambda\cap \partial \Lambda_l^\ve} 
\leq\frac 1{2\sigma }\|\v_l^\ve\|^2_{L^2(\Omega^\ve_l)}\|c_{l,D}\|^2_{L^\infty(\Omega_l^\ve)} \\
+ \frac \sigma 2\left(\|\nabla c^\ve_l\|^2_{L^2(\Omega^\ve_l)} + \|\nabla c_{l,D}\|^2_{L^2(\Omega_l^\ve)}\right)   
\end{aligned}
\end{equation}
for some $\sigma>0$. Applying the trace inequality  \cite{JaegerHornung:1991,Ptashnyk:2008} we obtain
\begin{eqnarray}\label{boundary_estim}
\begin{aligned}
\ve \|w\|^2_{L^2(\Gamma^\ve_l)}\leq C\left[ \|w\|^2_{L^2(\Omega^\ve_l)} + \ve^2  \|\nabla w\|^2_{L^2(\Omega^\ve_l)}\right],\\
\ve \|w\|^2_{L^2(R^\ve_l)}\leq C\left[\|w\|^2_{L^2(\Lambda^\ve_l)} + \ve^2  \|\nabla w\|^2_{L^2(\Lambda^\ve_l)}\right] , \label{eq22}
\end{aligned}
\end{eqnarray}
where $l=a,v,s$,  $C$ is independent of $\ve$, $\Gamma_s= \Gamma_a \cup \Gamma_v$, and $R_s= R_a \cup R_v$.  
Now  considering  
$c^\ve_l- c_{l,D}$ and   $\hat c^\ve_l$ as test functions in \eqref{micro_weak_av}--\eqref{micro_weak_tissue} and applying estimates \eqref{eq21} and \eqref{eq22} we obtain the first estimates in  \eqref{apriori2}. 

In order to show the non-negativity of $c^\ve_l$ and $\hat c^\ve_l$, we consider $c^{\ve, -}_l = \min\{c^\ve_l, 0\} $  and $\hat c^{\ve, -}_l = \min\{\hat c^\ve_l, 0\} $  as test functions to derive:
\begin{eqnarray*}
&&\phantom{+}\sum_{l=a,v} \left[ \partial_t \| c^{\ve, -}_l  \|^2_{L^2(\Omega^\ve_l)} + \| \nabla c^{\ve, -}_l  \|^2_{L^2(\Omega^\ve_l)} +  \ve \| c^{\ve, -}_l  \|^2_{L^2(\Gamma^\ve_l)}   - \langle \v^\ve_l c^{\ve, -}_l, \nabla c^{\ve, -}_l \rangle_{\Omega^\ve_{l}}  \right]  \\
&&+ \sum_{l=a,v}\left[\frac 1{\ve} \partial_t \|\hat c^{\ve, -}_l  \|^2_{L^2(\Lambda^\ve_l)} +\frac 1 \ve \| \nabla \hat c^{\ve, -}_l  \|^2_{L^2(\Lambda^\ve_l)} +  \|\hat c^{\ve, -}_l  \|^2_{L^2(R^\ve_l)}
- \frac 1\ve \langle \hat \v^\ve_l \hat c^{\ve, -}_l, \nabla \hat c^{\ve, -}_l \rangle_{\Lambda^\ve_{l}}\right]
   \\
&&- \sum_{l=a,v} \big[  \hat \lambda_l \langle \hat c^{\ve, +}_s, \hat c^{\ve, -}_l \rangle_{R^\ve_{l}}+  \ve \lambda_l \langle c^{\ve, +}_s, c^{\ve, -}_l \rangle_{\Gamma^\ve_{l}}\big]
    \leq  C\sum_{l=a,v} \big[   \ve \langle c^{\ve, -}_s, c^{\ve, -}_l \rangle_{\Gamma^\ve_{l}}  + \langle \hat c^{\ve, -}_s, \hat c^{\ve, -}_l \rangle_{R^\ve_{l}}\big]. 
\end{eqnarray*}
Similarly, for the oxygen concentration in the surrounding tissue, we have 
\begin{eqnarray*}
\partial_t \| c^{\ve, -}_s  \|^2_{L^2(\Omega^\ve_s)} + \| \nabla c^{\ve, -}_s  \|^2_{L^2(\Omega^\ve_s)} +
\frac 1 \ve \partial_t \|\hat c^{\ve, -}_s  \|^2_{L^2(\Lambda^\ve_s)}
 + \frac 1\ve \| \nabla \hat c^{\ve, -}_s  \|^2_{L^2(\Lambda^\ve_s)}  \\ 
\sum_{l=a,v}    \left[\ve \| c^{\ve, -}_s  \|^2_{L^2(\Gamma^\ve_l)} +  \|\hat c^{\ve, -}_s  \|^2_{L^2(R^\ve_l)}-\ve \lambda_l \langle c^{\ve,-}_s, c^{\ve, +}_l \rangle_{\Gamma^\ve_{l}} -
  \hat \lambda_l \langle \hat c^{\ve, -}_s, \hat c^{\ve, +}_l \rangle_{R^\ve_{l}}   \right]  \\    
    \leq  C\sum_{l=a,v}\left[ \ve \langle c^{\ve, -}_s, c^{\ve, -}_l \rangle_{\Gamma^\ve_{l}}  + \langle \hat c^{\ve, -}_s, \hat c^{\ve, -}_l  \rangle_{R^\ve_{l}} \right], 
\end{eqnarray*}
where $c^{\ve,+}_l = \max \{ 0, c^{\ve}_l \}$ and $\hat c^{\ve,+}_l = \max \{ 0, \hat c^{\ve}_l \}$. Using the  boundary conditions for $\v_l^\ve$,  $\hat \v_l^\ve$, $c^\ve_l$ and $\hat c^\ve_l$, we obtain that 
\begin{eqnarray*}
- \langle \v^\ve_l c^{\ve, -}_l, \nabla c^{\ve, -}_l \rangle_{\Omega^\ve_{l}} - \frac 1{\ve} \langle \hat \v^\ve_l \hat c^{\ve, -}_l,  \nabla \hat c^{\ve, -}_l \rangle_{\Lambda^\ve_{l}}  =0 
\end{eqnarray*}
for $l=a,v$.  Combining the last two inequalities and applying estimates \eqref{boundary_estim} and the Gronwall inequality, we obtain that
$c^{\ve,-}_l(t,x)=0$ a.e. in $\Omega^\ve_{l,T}$ and $\hat c^{\ve,-}_l(t,x)=0$ a.e. in $\Lambda^\ve_{l,T}$ for $l= a,v,s$. 

To show the boundedness of $c^\ve_l$ and $\hat c^\ve_l$ we consider $(c^\ve_l - A)^{+} $ and $(\hat c^\ve_l - A)^{+}$ as test functions in \eqref{micro_weak_av}--\eqref{micro_weak_tissue}, where   $A \geq \max\limits_{l=a,v,s} \{ \sup_{\Omega_T} c_{l,D}(t,x), \sup_{\Omega} c_{l}^0(x),  \sup_{\Lambda^\ve} \hat c^{\ve,0}_{l}(x)\}$. Then, due to the prescribed boundary conditions, we have 
\begin{eqnarray*}
- \langle \v^\ve_l c^{\ve}_l, \nabla (c^{\ve}_l - A)^+ \rangle_{\Omega^\ve_{l}} - \frac 1 \ve  \langle \hat \v^\ve_l \hat c^{\ve}_l, \nabla (\hat c^{\ve}_l  - A)^+\rangle_{\Lambda^\ve_{l}}  =0 
\end{eqnarray*}
for $l=a,v$, and thus
\begin{eqnarray*}
\sum_{l=a,v,s}\Big[ \partial_t \| (c^{\ve}_l - A)^+  \|^2_{L^2(\Omega^\ve_l)} + \| \nabla (c^{\ve}_l - A)^+  \|^2_{L^2(\Omega^\ve_l)} + \ve \| (c^{\ve}_l - A)^+  \|^2_{L^2(\Gamma^\ve_l)} \\
\frac 1 {\ve} \partial_t \| (\hat c^{\ve}_l - A)^+  \|^2_{L^2(\Lambda^\ve_l)} +\frac 1 \ve \| \nabla (\hat c^{\ve}_l - A)^+  \|^2_{L^2(\Lambda^\ve_l)}    +  \| (\hat c^{\ve}_l - A)^+  \|^2_{L^2(R^\ve_l)}\Big]\\
\leq C \sum_{l=a,v} \Big[ \ve \langle (c_s^\ve - A)^+, (c_l^\ve - A)^+ \rangle_{\Gamma^\ve_l} + 
 \langle (\hat c_s^\ve - A)^+, (\hat c_l^\ve - A)^+ \rangle_{R^\ve_l} \Big].
\end{eqnarray*}
 Thus, applying estimates  \eqref{boundary_estim} together with the Gronwall inequality,  we conclude that 
$(c^\ve_l(t,x)-A)^+ =0 $  a.e. in $\Omega^\ve_{l, T}$ and  $(\hat c^\ve_l(t,x)-A)^+ =0 $ a.e. in $\Lambda^\ve_{l,T}$ with $l=a,v,s$. Therefore,  the second part of the estimates in \eqref{apriori2} follows.  

Finally, differentiating equations \eqref{Diff_AV} and \eqref{Diff_Tis} with respect to time, and using (a) $\partial_t (c^\ve_l- c_{l,D})$ and $\partial_t \hat c^\ve_l $, respectively,  as test functions, and (b) the regularity assumptions on the initial values $c_l^0$ and $\hat c_l^{\ve, 0}$, yield the estimates for the time derivatives in  \eqref{apriori2}.
\end{proof}

To derive the macroscopic equations we employ the notion of two-scale convergence  \cite{Allaire:1992,Nguetseng:1989} and the unfolding method \cite{Cioranescu:2012, Cioranescu:2008_1}. We denote by  $\mathcal T^\ast_\ve: L^p(\Omega^\ve_l) \to   L^p(\Omega\times Y_{l})$  the  unfolding operator and by  $\mathcal T^b_\ve: L^p(\Gamma^\ve_l) \to  L^p(\Omega \times \Gamma_{l})$  the boundary unfolding operator, for $p\in [1 , \infty)$ (see, e.g., \cite{Cioranescu:2012, Cioranescu:2008_1}).  As in \cite{Cioranescu:2008, Jaeger:2006}  we also define unfolding operators in the thin layer  $\Lambda^\ve_l$ and on $R^\ve_l$, where $l=a,v,s$, as follows. 
  
\begin{definition}
For a measurable function $\phi$ on $\Lambda^\ve$ we define the unfolding operator
$\mathcal T_\ve^{bl}$ as 
$$
\mathcal T_\ve^{bl}(\phi)(x,y) = \phi(\ve [(\hat x, 0)/\ve]+ \ve y)\quad  \text{ for } \; \hat x \in \hat \Lambda, \; y \in Z\; .
$$
For a measurable function $\phi$ on $\Lambda^\ve_l$ we define the unfolding operator
$\mathcal T_\ve^{\ast, bl}$ as 
$$
\mathcal T_\ve^{\ast, bl}(\phi)(x,y) = \phi(\ve [(\hat x, 0)/\ve]+ \ve y)\quad  \text{ for } \; \hat x \in \hat \Lambda, \; y \in Z_l\; .
$$
For a measurable function $\phi$  on $R^\ve_l$ we define the boundary unfolding operator
$\mathcal T_\ve^{b, bl}$ as 
$$
\mathcal T_\ve^{b, bl}(\phi)(x,y) = \phi(\ve[(\hat x, 0)/\ve]+\ve y)\quad  \text{ for } \; \hat x \in \hat \Lambda, \; y \in R_l\; .
$$
\end{definition}
 The definition of the unfolding operator implies directly (see e.g., \cite{Cioranescu:2008, Jaeger:2006}) that  
$$
\|\mathcal T_\ve^{\ast, bl}  \phi \|^p_{L^p(\hat \Lambda\times Z_l)} \leq  \ve^{-1}|\hat Z|\| \phi \|^p_{L^p( \Lambda^\ve_l)} \; \;   \text{ and } \; \;  \ve \mathcal T_\ve^{\ast, bl} (\nabla \phi ) = \nabla_y \mathcal T_\ve^{\ast, bl} ( \phi ) \quad  \text{ in } 
\hat \Lambda\times Z_l.
$$
Theorems \ref{4-3} and \ref{4-4} below are proven in the same manner as the corresponding results in  \cite{Cioranescu:2012, Cioranescu:2008_1}.  For the convenience of the reader, we provide short sketches of the proofs.
\begin{theorem}\label{4-3}
Let $\{w^\ve\} \subset W^{1,p}(\Lambda^\ve)$, where $p\in(1,\infty)$ and 
$
 \frac 1\ve \| w^\ve\|^p_{W^{1,p}(\Lambda^\ve)} \leq C.
$
Then, there exist a subsequence (denoted again by $w^\ve$) and functions   $w \in W^{1,p}(\hat \Lambda)$ and  $w_1 \in L^p(\hat \Lambda; W^{1,p}(Z))$  such that  $w_1$ is $\hat Z$--periodic and
\begin{eqnarray*}
\mathcal T^{bl}_\ve(w^\ve) &\rightharpoonup &w  \hspace{2.5 cm }  \text{ weakly in } L^p(\hat \Lambda; W^{1,p}(Z)), \\
\mathcal T^{bl}_\ve(\nabla w^\ve)& \rightharpoonup & \nabla_{\hat x} w + \nabla_y w_1 \qquad \text{ weakly in } L^p(\hat \Lambda\times Z)\; .
\end{eqnarray*}
\end{theorem}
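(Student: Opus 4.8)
The plan is to adapt the classical gradient unfolding theorem of Cioranescu--Damlamian--Griso \cite{Cioranescu:2012,Cioranescu:2008_1} to the thin layer, the two new features being the $\ve^{-1}$ normalization built into the hypothesis and the reduction of the macroscopic variable from the three-dimensional slab $\Lambda^\ve$ to the flat two-dimensional set $\hat\Lambda$. First I would record the a priori bounds on the unfolded sequence. Combining the hypothesis $\ve^{-1}\|w^\ve\|^p_{W^{1,p}(\Lambda^\ve)}\le C$ with the norm estimate $\|\mathcal T^{bl}_\ve \phi\|^p_{L^p(\hat\Lambda\times Z)}\le \ve^{-1}|\hat Z|\,\|\phi\|^p_{L^p(\Lambda^\ve)}$ shows that $\mathcal T^{bl}_\ve(w^\ve)$ and $\mathcal T^{bl}_\ve(\nabla w^\ve)$ are bounded in $L^p(\hat\Lambda\times Z)$. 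The scaling identity $\nabla_y\mathcal T^{bl}_\ve(w^\ve)=\ve\,\mathcal T^{bl}_\ve(\nabla w^\ve)$ upgrades this to a bound of $\mathcal T^{bl}_\ve(w^\ve)$ in $L^p(\hat\Lambda;W^{1,p}(Z))$ whose $y$-gradient is $O(\ve)$. Extracting a weakly convergent subsequence $\mathcal T^{bl}_\ve(w^\ve)\rightharpoonup w$ in $L^p(\hat\Lambda;W^{1,p}(Z))$ and passing to the limit in the identity forces $\nabla_y w=0$, so $w$ is independent of the fast variable, $w=w(\hat x)$.

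Next I would establish that $w\in W^{1,p}(\hat\Lambda)$ and recover its tangential gradient. Since the tangential part $\nabla_{\hat x}w^\ve$ of the full gradient is also controlled, $\mathcal T^{bl}_\ve(\nabla_{\hat x}w^\ve)$ is bounded in $L^p(\hat\Lambda\times Z)$; the obstruction is that $\mathcal T^{bl}_\ve(w^\ve)$ is piecewise constant in $\hat x$ on each $\ve$-cell, so one cannot differentiate the cell averages directly. I would therefore compare the cell averages $\hat x\mapsto\frac1{|Z|}\int_Z\mathcal T^{bl}_\ve(w^\ve)\,dy$ with a scale-splitting (local interpolation) operator as in \cite{Cioranescu:2012,Cioranescu:2008_1}, whose $\hat x$-gradient is dominated by $\mathcal T^{bl}_\ve(\nabla_{\hat x}w^\ve)$ and which stays $L^p$-close to those averages; this places the limit in $W^{1,p}(\hat\Lambda)$ and identifies the $Z$-average of $\mathcal T^{bl}_\ve(\nabla w^\ve)$ with $\nabla_{\hat x}w$, the normal component averaging to zero.

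For the corrector I would introduce the rescaled fluctuation $W^\ve=\ve^{-1}\big(\mathcal T^{bl}_\ve(w^\ve)-\tfrac1{|Z|}\int_Z\mathcal T^{bl}_\ve(w^\ve)\,dy\big)$, which by the scaling identity satisfies $\nabla_y W^\ve=\mathcal T^{bl}_\ve(\nabla w^\ve)$ and has vanishing $Z$-mean. The Poincar\'e--Wirtinger inequality on $Z$ bounds $W^\ve$ in $L^p(\hat\Lambda;W^{1,p}(Z))$, so along a further subsequence $W^\ve\rightharpoonup \hat w$ and $\mathcal T^{bl}_\ve(\nabla w^\ve)=\nabla_y W^\ve\rightharpoonup\nabla_y\hat w$. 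Subtracting the part of $\hat w$ that is affine in $y$ with slope $\nabla_{\hat x}w$ defines $w_1$ and yields the asserted decomposition $\nabla_y\hat w=\nabla_{\hat x}w+\nabla_y w_1$.

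The main obstacle is proving the $\hat Z$-periodicity of $w_1$: weak compactness alone does not deliver it, and as in the fixed-domain theory one must exploit that the jumps of $\mathcal T^{bl}_\ve(w^\ve)$ between neighbouring tangential cells encode exactly the macroscopic gradient, so that after removing the affine part the fluctuation becomes asymptotically cell-periodic. Concretely I would test $\mathcal T^{bl}_\ve(\nabla w^\ve)$ against fields of the form $\psi(\hat x)\Phi(y)$ with $\Phi$ smooth, $\hat Z$-periodic and $y$-divergence-free, and against $y$-gradients of periodic functions, and pass to the limit to show that $\nabla_y w_1$ is curl-free in $y$ with zero mean and matching traces on opposite faces of $Z$, which produces a $\hat Z$-periodic potential $w_1\in L^p(\hat\Lambda;W^{1,p}(Z))$. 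The remaining genuinely new bookkeeping is that the lattice shift $\ve[(\hat x,0)/\ve]$ acts only in the tangential directions, so the full layer thickness is unfolded onto a single copy of $Z$ and the limit problem naturally lives over the two-dimensional base $\hat\Lambda$.
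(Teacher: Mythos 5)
Your construction is, at its core, the same as the paper's: your rescaled fluctuation $W^\ve=\ve^{-1}\bigl(\mathcal T^{bl}_\ve(w^\ve)-\tfrac{1}{|Z|}\int_Z\mathcal T^{bl}_\ve(w^\ve)\,dy\bigr)$ is exactly the quantity $V^\ve$ in the paper's proof, and the Poincar\'e--Wirtinger bound followed by subtraction of the affine function $\hat y^c\cdot\nabla_{\hat x}w$ is precisely how the paper produces $w_1$. You deviate in two sub-steps, both legitimately. (i) To place the limit $w$ in $W^{1,p}(\hat\Lambda)$, the paper does not use interpolation operators: it rescales, $\tilde w^\ve(\hat x,y)=w^\ve(\hat x,\ve y)$, so that the hypothesis gives a uniform $W^{1,p}$ bound on the fixed slab $\Lambda^1=\hat\Omega\times(0,1)$ with $\|\partial_{y_n}\tilde w^\ve\|_{L^p(\Lambda^1)}=O(\ve)$, whence $w\in W^{1,p}(\hat\Lambda)$ and $\nabla_{\hat x}\tilde w^\ve\rightharpoonup\nabla_{\hat x}w$. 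Your $Q_1$-interpolation (scale-splitting) route is what the paper reserves for the \emph{perforated} layer (Lemma \ref{4-5}, Theorems \ref{th_22} and \ref{4-8}), where the global rescaling is unavailable; it works here too, but is heavier than needed. (ii) For the $\hat Z$-periodicity of $w_1$, the paper compares traces of $V^\ve$ on opposite lateral faces, using that cell jumps of $\mathcal T^{bl}_\ve(w^\ve)$ equal macroscopic $\ve$-shifts and are cancelled in the limit by the jumps of the affine part $\hat y^c\cdot\nabla_{\hat x}w$. Your route via duality --- testing against $\psi(\hat x)\Phi(y)$ with $\Phi$ $\hat Z$-periodic, $\dv_y\Phi=0$ and $\Phi_n=0$ on $y_n\in\{0,1\}$, then invoking the Helmholtz-type characterization of gradients of $\hat Z$-periodic potentials --- is the two-scale-convergence argument; it is equally valid and, once $w\in W^{1,p}(\hat\Lambda)$ is known, it delivers the decomposition and the periodicity in one stroke, making the explicit fluctuation $W^\ve$ redundant. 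Its delicate point is the one your normal-trace condition addresses: the boundary terms on the top and bottom of the layer do not cancel by periodicity, so $\Phi_n=0$ there is essential.

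One side claim in your second step is false, although it does not damage the proof: the $Z$-average of the limit of $\mathcal T^{bl}_\ve(\nabla w^\ve)$ need \emph{not} have vanishing normal component. Since $w_1$ is periodic only in the tangential variables, $\int_Z\partial_{y_n}w_1\,dy=\int_{\hat Z}\bigl(w_1|_{y_n=1}-w_1|_{y_n=0}\bigr)\,d\hat y$ can be nonzero. For instance, $w^\ve(x)=x_n$ satisfies the hypotheses and gives $w=0$ and $\mathcal T^{bl}_\ve(\nabla w^\ve)\equiv\mathbf e_n$, hence $w_1=y_n$: the entire limit sits in the normal derivative of the corrector. Only the tangential components of the cell average are identified with $\nabla_{\hat x}w$. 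Your construction of $w_1$ absorbs this normal average automatically, but the remark should be corrected, as it is precisely the structural difference between this thin-layer statement and the classical fixed-domain unfolding theorem.
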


\begin{proof}[Sketch of proof]
By rescaling  $\tilde w^\ve(\hat x,y)= w^\ve(\hat x, \ve y)$ and using the assumptions on $\{w^\ve\}$ we obtain that  there exists a function $w\in W^{1,p}(\hat \Lambda)$ with 
$\tilde w^\ve \to w$ in $L^p(\Lambda^1)$ and $\nabla_{\hat x} \tilde w^\ve   \rightharpoonup   \nabla_{\hat x} w$ in $L^p(\Lambda^1)$.
Also, the assumptions on $\{w^\ve\}$ ensure that  $\mathcal T^{bl}_\ve(w^\ve)$,  $\mathcal T^{bl}_\ve(\nabla w^\ve)$, and 
$\nabla_y\mathcal T^{bl}_\ve(w^\ve)$  are bounded in $L^p(\hat \Lambda\times Z)$.  Hence,  $\mathcal T^{bl}_\ve(w^\ve)  \rightharpoonup  w$ in $L^p(\hat \Lambda; W^{1,p}(Z))$.
We now define 
$$
V^\ve = \frac 1 \ve (\mathcal T^{bl}_\ve(w^\ve) - \mathcal M^{bl}_\ve(w^\ve)) ,
\quad \text{ where  } \quad 
\mathcal M^{bl}_\ve(w^\ve) = \frac 1{|Z|} \int_Z \mathcal T^{bl}_\ve(w^\ve)(\,\cdot\,, y) dy.
$$
Using the assumptions on $w^\ve$ and applying Poincar\'e's inequality, we have that 
\begin{eqnarray*}
&& \|\nabla_y V^\ve\|_{L^p(\hat \Lambda\times Z)} = \|\mathcal T^{bl}_\ve(\nabla w^\ve)\|_{L^p(\hat \Lambda\times Z)} \leq C_1, \\
&& \| V^\ve - \hat y^c \cdot \nabla_{\hat x} w\|_{L^p(\hat \Lambda\times Z)}\leq C_2 \|\nabla_y V^\ve - \nabla_{\hat x} w\|_{L^p(\hat \Lambda\times Z)}\leq C_3, 
\end{eqnarray*}
where $\hat y^c=(y_1 - a_1/2, \ldots, y_{n-1} - a_{n-1}/2)$.
Then, there exists a function $w_1 \in L^p(\hat\Lambda; W^{1,p}(Z))$ such that,  up to a subsequence,
\begin{eqnarray*}
V^\ve - \hat y^c \cdot \nabla_{\hat x} w  \rightharpoonup w_1 \quad \text{ in } \quad   L^p(\hat\Lambda; W^{1,p}(Z)).
\end{eqnarray*}
Hence, we have the second convergence result  stated in the theorem. 

The proof of $\hat Z$-periodicity of  $w_1$ follows the same lines as in the case of $\mathcal T_\ve$, see e.g.  \cite{Cioranescu:2008_1}. Specifically,  one considers the differences $V^\ve(\hat x, y^1_j) - V^\ve(\hat x, y^0_j)$  and $ \hat y^{c,1}_j \cdot \nabla_{\hat x} w - \hat y^{c,0}_j \cdot \nabla_{\hat x} w$, and shows that 
$w_1(\hat x,  y^1_j) = w_1(\hat x,  y^0_j)$  in the weak sense for $j=1, \ldots, n-1$, 
where 
$y^1_j= (y_1, \ldots, y_{j-1}, a_j, y_{j+1}, \ldots, y_n)$,  $y^0_j= (y_1, \ldots, y_{j-1}, 0, y_{j+1}, \ldots, y_n)$, and $\hat Z=(0,a_1)\times \ldots\times (0,a_{n-1})$.
\end{proof}

\begin{theorem}\label{4-4}
Let $\{w^\ve\} \subset W^{1,p}( \Lambda^\ve_l) $ , where $p\in (1,\infty)$ and $l=a,v,s$,  with 
$$
\ve^{-1} \| w^\ve\|^p_{L^p(\Lambda^\ve_l)} \leq C, \qquad  \ve^{p-1} \| \nabla w^\ve\|^p_{L^p( \Lambda_l^\ve)} \leq C.
$$
Then, there exist a subsequence (denoted again by $w^\ve$) and  a $\hat Z$-periodic function $\hat w \in L^p (\hat \Lambda;  W^{1,p}(Z_l))$,  such that
\begin{eqnarray*}
\mathcal T^{\ast, bl}_\ve(w^\ve)& \rightharpoonup& \hat w \quad \; \; \; \; \text{ weakly in }  L^p (\hat \Lambda;  W^{1,p} (Z_l)), \\
\ve \mathcal T^{\ast, bl}_\ve(\nabla w^\ve)& \rightharpoonup &\nabla_y  \hat w \quad \text{ weakly in }  L^p (\hat \Lambda \times Z_l)\; .
\end{eqnarray*}
\end{theorem}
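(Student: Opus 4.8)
The plan is to push the two a priori bounds on $w^\ve$ through the unfolding operator $\mathcal{T}^{\ast,bl}_\ve$ and then extract a weak limit in $L^p(\hat\Lambda; W^{1,p}(Z_l))$. First I would invoke the two properties of $\mathcal{T}^{\ast,bl}_\ve$ recorded just before the statement. Applying the norm estimate $\|\mathcal{T}^{\ast,bl}_\ve \phi\|^p_{L^p(\hat\Lambda\times Z_l)} \le \ve^{-1}|\hat Z|\,\|\phi\|^p_{L^p(\Lambda^\ve_l)}$ to $\phi = w^\ve$, together with the hypothesis $\ve^{-1}\|w^\ve\|^p_{L^p(\Lambda^\ve_l)} \le C$, gives a uniform bound on $\mathcal{T}^{\ast,bl}_\ve w^\ve$ in $L^p(\hat\Lambda\times Z_l)$. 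Applying the same estimate to $\phi = \nabla w^\ve$ and using the commutation identity $\ve\,\mathcal{T}^{\ast,bl}_\ve(\nabla w^\ve) = \nabla_y \mathcal{T}^{\ast,bl}_\ve(w^\ve)$, the gradient hypothesis $\ve^{p-1}\|\nabla w^\ve\|^p_{L^p(\Lambda^\ve_l)} \le C$ yields
$$\|\nabla_y \mathcal{T}^{\ast,bl}_\ve w^\ve\|^p_{L^p(\hat\Lambda\times Z_l)} = \ve^{p}\,\|\mathcal{T}^{\ast,bl}_\ve(\nabla w^\ve)\|^p_{L^p(\hat\Lambda\times Z_l)} \le \ve^{p-1}|\hat Z|\,\|\nabla w^\ve\|^p_{L^p(\Lambda^\ve_l)} \le |\hat Z|\,C.$$
Hence $\{\mathcal{T}^{\ast,bl}_\ve w^\ve\}$ is bounded in $L^p(\hat\Lambda; W^{1,p}(Z_l))$.

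Since $p\in(1,\infty)$, this space is reflexive, so along a subsequence $\mathcal{T}^{\ast,bl}_\ve w^\ve \rightharpoonup \hat w$ weakly in $L^p(\hat\Lambda; W^{1,p}(Z_l))$ for some $\hat w$. Weak convergence in this space forces the $y$-gradients to converge weakly as well, which is precisely $\ve\,\mathcal{T}^{\ast,bl}_\ve(\nabla w^\ve) = \nabla_y\mathcal{T}^{\ast,bl}_\ve w^\ve \rightharpoonup \nabla_y\hat w$ in $L^p(\hat\Lambda\times Z_l)$, the second stated convergence. I would stress the contrast with Theorem~\ref{4-3}: there the stronger bound $\ve^{-1}\|\nabla w^\ve\|^p_{L^p}\le C$ makes $\mathcal{T}^{bl}_\ve(\nabla w^\ve)$ itself bounded, so a macroscopic gradient survives and the limit decomposes as $\nabla_{\hat x}w + \nabla_y w_1$. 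Here the weaker gradient bound only controls $\ve\,\mathcal{T}^{\ast,bl}_\ve(\nabla w^\ve)$, and the extra factor $\ve$ annihilates any macroscopic $\nabla_{\hat x}$-contribution; consequently no corrector decomposition appears and the entire limiting behaviour is carried by the single cell profile $\hat w$.

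It remains to establish the $\hat Z$-periodicity of $\hat w$, which I expect to be the main obstacle, since it is the only step not reducible to functional-analytic compactness. Here I would reproduce the shift-and-trace argument used for $w_1$ in the proof of Theorem~\ref{4-3} and in \cite{Cioranescu:2008_1}: because $w^\ve$ is a genuine single-valued function on $\Lambda^\ve_l$, its unfolded images on the face $\{y_j = a_j\}$ of one horizontal cell and on the face $\{y_j = 0\}$ of the adjacent cell are traces of the same $W^{1,p}$ function across a common interface. Comparing $\mathcal{T}^{\ast,bl}_\ve w^\ve(\hat x, y^1_j)$ with $\mathcal{T}^{\ast,bl}_\ve w^\ve(\hat x, y^0_j)$, controlling the difference by the uniform gradient bound, and passing to the limit shows $\hat w(\hat x, y^1_j) = \hat w(\hat x, y^0_j)$ in the weak (trace) sense for each $j=1,\ldots,n-1$, with $y^1_j$ and $y^0_j$ the opposite-face points introduced in Theorem~\ref{4-3}. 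This gives periodicity with respect to the $(n-1)$-dimensional horizontal lattice $\hat Z = (0,a_1)\times\cdots\times(0,a_{n-1})$, which is the only direction in which the thin layer is periodic, completing the argument.
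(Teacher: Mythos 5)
Your proposal is correct and follows essentially the same route as the paper: uniform boundedness of $\mathcal T^{\ast,bl}_\ve(w^\ve)$ in $L^p(\hat\Lambda; W^{1,p}(Z_l))$ via the norm estimate plus the commutation identity $\ve\,\mathcal T^{\ast,bl}_\ve(\nabla w^\ve)=\nabla_y\mathcal T^{\ast,bl}_\ve(w^\ve)$, weak compactness by reflexivity, and $\hat Z$-periodicity by the shift argument of Cioranescu et al. The paper's periodicity step is precisely the weak form of your face-matching idea---it tests the one-period shift in $y$, which by single-valuedness of $w^\ve$ equals an $\ve$-shift in $\hat x$, against $\psi\in C_0(\hat\Lambda\times Z)$ and lets the shift vanish as $\ve\to 0$; note only that this step uses just the $L^p$ bound and needs no gradient estimate (which would anyway be delicate in a perforated layer).
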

\begin{proof}
Due to the assumptions on $\{w^\ve\}$, we obtain that $\mathcal T^{\ast, bl}_\ve(w^\ve)$ is bounded in $L^p(\hat \Lambda; W^{1,p}(Z_l))$.  Thus, there exists a function $\hat w$ such that the stated  convergences are satisfied. 
The $\hat Z$-periodicity follows by the fact that  for $\psi \in C_0(\hat\Lambda\times Z)$,
\begin{eqnarray*}
&& \int_{\hat \Lambda\times Z_l}\left[ \mathcal T^{\ast, bl}_\ve(w^\ve)(\hat x, y + (\hat e_j, 0)) -  \mathcal T^{\ast, bl}_\ve(w^\ve)(\hat x, y)\right] \psi(\hat x,y) d\hat x dy \\
&&  = \int_{\hat \Lambda\times Z_l} \mathcal T^{\ast, bl}_\ve(w^\ve)(\hat x, y) ( \psi(\hat x - \ve \hat e_j,y) - \psi(\hat x, y) ) d\hat x dy \to 0  \quad \text{ as } \ve \to 0,
\end{eqnarray*}
where $\hat e_j$ are standard basis vectors for $j=1, \ldots, n-1$.
\end{proof}

To prove convergence results for the unfolding operator in the perforated  thin layer  $\Lambda^\ve_l$, with $l=a,v,s$, we define an interpolation operator $\mathcal Q^{\ast, bl}_\ve$.
First, we introduce the  notation:
  \begin{equation*}
  \begin{aligned}
& \mathcal Y= \text{Int}\bigcup_{k\in \{0,1\}^{d-1}} (\overline Z + (k,0)), \;  \hat \Lambda^\ve_{\mathcal Y}=  \text{Int}\bigcup_{\xi \in  \Xi^\ve_{\mathcal Y} } \ve(\overline{\hat Z} + \xi), \;   \Lambda^\ve_{\mathcal Y, l}= \text{Int}\bigcup_{\xi \in  \Xi^\ve_{\mathcal Y} } \ve(\overline Z_l + (\xi,0)),\\
& \Xi^\ve_{\mathcal Y} = \{\xi \in \mathbb Z^{n-1} :  \ve(\mathcal Y+ (\xi,0)) \subset \Lambda^\ve \}, \; \; 
\hat \Xi^\ve= \{ \xi \in \mathbb Z^{n-1}: \ve(Z+ (\xi,0)) \subset \Lambda^\ve \}.
\end{aligned}
\end{equation*}
 Then, the definition of $\mathcal Q^{\ast, bl}_\ve$ is similar to the one for perforated domains in \cite{Cioranescu:2012}.
 \begin{definition} The  operator 
$
\mathcal Q^{\ast, bl}_\ve: L^p(\Lambda^\ve_{l,T}) \to L^p(0,T;W^{1,\infty}( \hat \Lambda^\ve_{\mathcal Y}\times (0,\ve)))
$
for $p \in [1, + \infty]$ is defined by
$$
\mathcal Q^{\ast,bl}_\ve(\phi)(t,\ve \xi) = \frac{1}{|Z_l|} \int_{Z_l} \phi(t,\ve (\xi, 0) + \ve y) dy \quad \text{ for } \xi \in \hat \Xi^\ve, \; \text{a.a.} \;  t \in (0,T). 
$$
For $x \in  \hat \Lambda^\ve_{\mathcal Y} \times (0,\ve) $,   $\mathcal Q^{\ast,bl}_\ve(\phi)(t,x)$ is defined as the
$Q_1$- interpolant of $\mathcal Q^{\ast,bl}_\ve(\phi)(t,\ve \xi)$ at the vertices of the cell $\ve ( [\hat x/\ve] + \hat Z)$ with respect to $x_1, \ldots, x_{n-1}$ and constant in $x_n$,  for a.a. $t \in (0,T)$.  
\end{definition}

We remark that  $\partial_t\mathcal Q^{\ast, bl}_\ve( \phi) = \mathcal Q^{\ast,bl}_\ve(\partial_t \phi)$ 
and $\partial_t\mathcal R^{\ast,bl}_\ve(\phi)= \partial_t(\phi - \mathcal Q^{\ast,bl}_\ve( \phi) )  = \mathcal R^{\ast,bl}_\ve(\partial_t \phi)$.
Lemma \ref{4-5} and Theorem \ref{th_22} below are proven in a similar manner as the corresponding results in  \cite{Cioranescu:2012}.
\begin{lemma}\label{4-5}
For all $\phi \in W^{1,p}(\Lambda^\ve_{l,T})$, where $p \in (1, + \infty)$, the following estimates hold
\begin{eqnarray*}
\|\mathcal Q^{\ast,bl}_\ve(\phi) \|_{L^p((0,T)\times\hat \Lambda^\ve_{\mathcal Y}\times (0,\ve))} &\leq & C \| \phi \|_{L^p(\Lambda^\ve_{l,T})}, \\
\|\nabla_{\hat x} \mathcal Q^{\ast,bl}_\ve(\phi) \|_{L^p((0,T)\times\hat \Lambda^\ve_{\mathcal Y}\times (0,\ve))}& \leq & C \|\nabla \phi \|_{L^p(\Lambda^\ve_{l,T})},   \\
\|\mathcal R^{\ast,bl}_\ve(\phi)\|_{L^p((0,T)\times \Lambda^\ve_{\mathcal Y, l})}& \leq & C\ve  \| \nabla\phi \|_{L^p(\Lambda^\ve_{l,T})}, \\
\|\nabla\mathcal R^{\ast,bl}_\ve(\phi)\|_{L^p((0,T)\times \Lambda^\ve_{\mathcal Y, l})} & \leq & C  \|\nabla \phi \|_{L^p(\Lambda^\ve_{l,T})}, \\
 \|\partial_t \mathcal Q^{\ast,bl}_\ve(\phi) \|_{L^p((0,T)\times \hat \Lambda^\ve_{\mathcal Y}\times (0,\ve))} &\leq & C \|\partial_t \phi \|_{L^p(\Lambda^\ve_{l,T})}, \\ 
\|\partial_t \mathcal R^{\ast,bl}_\ve(\phi)\|_{L^p((0,T)\times \Lambda^\ve_{\mathcal Y, l})} &\leq & C \ve \|\partial_t \phi \|_{L^p(\Lambda^\ve_{l,T})},
\end{eqnarray*}
where the constant $C$ is independent of $\ve$. 
\end{lemma}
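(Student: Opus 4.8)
The plan is to prove the six bounds cell by cell, reducing each to the reference configuration and then summing over the horizontal cells $\ve(\hat Z+\xi)$, $\xi\in\hat\Xi^\ve$, which tile $\hat\Lambda^\ve_{\mathcal Y}$. For the first estimate I would start from two elementary observations: the nodal value $\mathcal Q^{\ast,bl}_\ve(\phi)(t,\ve\xi)$ is a mean of $\phi$ over $\ve(Z_l+(\xi,0))$, so $|\mathcal Q^{\ast,bl}_\ve(\phi)(t,\ve\xi)|^p$ is dominated by the mean of $|\phi|^p$ over that cell by Jensen's inequality; and on the reference cell the finite-dimensional space of $Q_1$ functions carries equivalent $L^p$ and $W^{1,p}$ norms, so the norm of the interpolant is controlled by the $\ell^p$-norm of its nodal values. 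A bookkeeping point specific to the thin layer is that the interpolant is constant in $x_n\in(0,\ve)$, so integrating over this slab supplies a factor $\ve$ that exactly compensates the mismatch between the $(n-1)$-dimensional reconstruction and the $n$-dimensional cell averaging; the first estimate follows after summation, using that each cell enters boundedly many neighbouring patches. Since $\partial_t$ commutes with $\mathcal Q^{\ast,bl}_\ve$ and $\mathcal R^{\ast,bl}_\ve$, as recorded in the remark preceding the lemma, the fifth and sixth estimates are immediate consequences of the first and third applied to $\partial_t\phi$.

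For the gradient bound on $\mathcal Q^{\ast,bl}_\ve(\phi)$ I would expand $\nabla_{\hat x}$ of the $Q_1$-interpolant on a cell in terms of the shape-function gradients, each of size $\ve^{-1}$; since the $Q_1$ shape functions form a partition of unity their gradients sum to zero, so the expression depends only on the differences of neighbouring nodal values. Each such difference is a difference of means of $\phi$ over adjacent perforated cells, and the crux is a discrete Poincar\'e (telescoping) estimate bounding it by $\ve\,\|\nabla\phi\|_{L^p}$ over the union of the two cells. Tracking the powers of $\ve$---the $\ve^{-1}$ from the shape-function gradients, the $\ve$ from the thin-direction integration, and the $\ve$ from the Poincar\'e bound---one finds they cancel, and summation over cells yields the second estimate with no residual factor of $\ve$.

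For the remainder operator I would write $\mathcal R^{\ast,bl}_\ve(\phi)=(\phi-m_\ve\phi)+(m_\ve\phi-\mathcal Q^{\ast,bl}_\ve(\phi))$, where $m_\ve\phi$ denotes the function equal on each cell to the mean of $\phi$ over $\ve(Z_l+(\xi,0))$. The Poincar\'e--Wirtinger inequality on the scaled perforated cell bounds the first term by $C\ve\,\|\nabla\phi\|_{L^p}$, while the second term is a combination of first-order differences of the cell means and is therefore also $O(\ve\,\|\nabla\phi\|_{L^p})$ by the same telescoping argument; summation gives the third estimate. The fourth then follows by writing $\nabla\mathcal R^{\ast,bl}_\ve(\phi)=\nabla\phi-\nabla\mathcal Q^{\ast,bl}_\ve(\phi)$ and combining the triangle inequality with the gradient estimate just established.

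The step I expect to be the main obstacle is the discrete Poincar\'e inequality for differences of means over adjacent perforated cells, with a constant uniform in $\ve$. This rests on the connectedness of the material part $Z_l$ across cell interfaces, so that one can pass from one cell mean to a neighbouring one through a path contained in the perforation; this is precisely where the standing Lipschitz assumptions on $Z_a,Z_v,Z_s$ enter. In the present setting one must additionally keep careful track of the fact that the interpolation acts only in the horizontal variables while the averaging is over the full three-dimensional cell, and verify that the reference-cell constants transport to the $\ve$-scale with the correct powers of $\ve$; once this uniform telescoping estimate is available, the remaining summations are routine and follow the pattern of \cite{Cioranescu:2012}.
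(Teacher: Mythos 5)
Your proposal is correct and follows essentially the route the paper takes: the paper gives no written proof of Lemma~\ref{4-5}, stating only that it is proven in the same manner as the corresponding results in \cite{Cioranescu:2012}, and your argument---Jensen's inequality for the nodal cell averages, $L^p$-norm equivalence on the finite-dimensional $Q_1$ space, a discrete Poincar\'e (telescoping) estimate for differences of means over adjacent perforated cells, Poincar\'e--Wirtinger for the remainder term, and commutation with $\partial_t$ for the last two bounds---is precisely that standard proof, with the correct thin-layer bookkeeping (interpolation only in the horizontal variables, the factor of $\ve$ from the $x_n$-integration cancelling against the $\ve^{-n}$ of the scaled cell averages). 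Your identification of the uniform discrete Poincar\'e inequality across neighbouring cells, and hence the connectedness (with Lipschitz regularity) of the material part of the unit cell, as the crucial geometric hypothesis is exactly the point on which the cited argument rests.
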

\begin{theorem}\label{th_22}
Assume that the sequence $\{w^\ve\} \subset L^p(0,T; W^{1,p}(\Lambda^\ve_l))\cap W^{1,p}(0,T; L^p(\Lambda^\ve_l))$, with $p \in (1, + \infty)$, satisfies  $\ve^{-1}\|w^\ve\|^p_{L^p(0,T; W^{1,p}(\Lambda^\ve_l))} +\ve^{-1}\|\partial_t w^\ve\|^p_{L^p((0,T)\times \Lambda^\ve_l)} \leq C$. Then, there exists a function $w \in L^p(0,T; W^{1,p}(\hat \Lambda))$ such that 
\begin{eqnarray}\label{conver_th_22}
\begin{aligned}
&\mathcal T^{bl}_\ve (\mathcal Q^{\ast,bl}_\ve(w^\ve)^\sim) \rightharpoonup  w &&   \text{ weakly in } L^p(\hat \Lambda_T; W^{1,p}(Z)), \\
&\mathcal T^{bl}_\ve (  \mathcal Q^{\ast,bl}_\ve(w^\ve)^\sim) \to  w   && \text{ strongly in } L^p(0,T; L^p_{\text{loc}}(\hat \Lambda; W^{1,p}(Z))), \\
&\mathcal T^{bl}_\ve ( \nabla_{\hat x} \mathcal Q^{\ast,bl}_\ve(w^\ve)^\sim) \rightharpoonup \nabla_{\hat x} w &&   \text{ weakly in } L^p(\hat \Lambda_T\times Z),
\end{aligned}
\end{eqnarray}
where ${\mathcal Q^{\ast,bl}_\ve}(w^\ve)^\sim$ is the extension by zero  of ${\mathcal Q^{\ast,bl}_\ve}(w^\ve)$  from $(0,T)\times \hat\Lambda^\ve_{\mathcal Y} \times (0,\ve)$ into $\Lambda^\ve_T$.
\end{theorem}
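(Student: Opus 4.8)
The plan is to move all the analysis onto the macroscopic interpolant $q^\ve := \mathcal Q^{\ast,bl}_\ve(w^\ve)$, which by construction does not depend on $x_n$ and is piecewise $Q_1$ in $\hat x$, and then to read the three stated convergences off an elementary relation between $q^\ve$ and its unfolding. First I would record uniform bounds. Combining the hypotheses $\ve^{-1}\|w^\ve\|^p_{L^p(0,T;W^{1,p}(\Lambda^\ve_l))}+\ve^{-1}\|\partial_t w^\ve\|^p_{L^p(\Lambda^\ve_{l,T})}\le C$ with the first, second and fifth estimates of Lemma~\ref{4-5}, and using that $q^\ve$ is constant in $x_n\in(0,\ve)$ (so that integration in $x_n$ contributes a factor $\ve$ that cancels the $\ve^{-1}$ in the hypotheses), one finds that, regarded as a function of $(t,\hat x)$,
\[
\|q^\ve\|_{L^p(0,T;W^{1,p}(\hat\Lambda^\ve_{\mathcal Y}))}+\|\partial_t q^\ve\|_{L^p((0,T)\times\hat\Lambda^\ve_{\mathcal Y})}\le C,
\]
with $C$ independent of $\ve$, where $\hat\Lambda^\ve_{\mathcal Y}\subset\hat\Omega\equiv\hat\Lambda$ exhausts $\hat\Omega$ as $\ve\to0$.

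Second, I would extract the limit by compactness. For any $K\Subset\hat\Omega$ we have $K\subset\hat\Lambda^\ve_{\mathcal Y}$ once $\ve$ is small, so the bounds above together with the Aubin--Lions--Simon lemma furnish a subsequence and a function $w$ with $q^\ve\to w$ strongly in $L^p((0,T)\times K)$ and $\nabla_{\hat x}q^\ve\rightharpoonup\nabla_{\hat x}w$ weakly in $L^p((0,T)\times K)$. Exhausting $\hat\Omega$ by compacts and invoking lower semicontinuity of the norm against the global bound yields $w\in L^p(0,T;W^{1,p}(\hat\Lambda))$.

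Third, I would pass to the unfolded object. Since $\mathcal Q^{\ast,bl}_\ve(w^\ve)$ is $Q_1$ in $\hat x$ and constant in $x_n$, the scaling identity $\nabla_y\mathcal T^{bl}_\ve(\phi)=\ve\,\mathcal T^{bl}_\ve(\nabla\phi)$ gives $\|\nabla_y\mathcal T^{bl}_\ve(\mathcal Q^{\ast,bl}_\ve(w^\ve)^\sim)\|_{L^p(\hat\Lambda_T\times Z)}\le C\ve\,\|\nabla_{\hat x}q^\ve\|\to0$. Hence $\mathcal T^{bl}_\ve(\mathcal Q^{\ast,bl}_\ve(w^\ve)^\sim)$ is bounded in $L^p(\hat\Lambda_T;W^{1,p}(Z))$, every limit is independent of $y$, and its oscillation over a single cell is $O(\ve)$, so it differs from $q^\ve$ (extended constantly in $y$) by a term vanishing in $L^p$. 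Combined with $q^\ve\to w$ strongly on compacts, this delivers the strong convergence in $L^p(0,T;L^p_{\text{loc}}(\hat\Lambda;W^{1,p}(Z)))$, whence the weak convergence in $L^p(\hat\Lambda_T;W^{1,p}(Z))$ to the $y$-independent limit $w$. The third convergence follows in the same spirit from $\nabla_{\hat x}q^\ve\rightharpoonup\nabla_{\hat x}w$: the unfolded cell gradients $\mathcal T^{bl}_\ve(\nabla_{\hat x}\mathcal Q^{\ast,bl}_\ve(w^\ve)^\sim)$ are bounded, their $y$-averages coincide asymptotically with the discrete gradient of the nodal values of $q^\ve$, and the $y$-dependent corrector disappears, exactly as in \cite{Cioranescu:2012}.

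The hard part will be twofold. One difficulty is controlling the $O(\ve)$ boundary strip near $\partial\hat\Omega$ produced by the extension by zero; this is precisely why the strong statement must be localised as $L^p_{\text{loc}}$ rather than global. The other is verifying that the $Q_1$ interpolant genuinely loses its $y$-dependence in the limit, which rests on the scaling $\|\nabla w^\ve\|^p_{L^p(\Lambda^\ve_{l,T})}\le C\ve$ being exactly what makes $\nabla_y$ of the unfolded interpolant of order $\ve$; a weaker hypothesis would leave a nontrivial corrector in the limit and break the identification with $w$.
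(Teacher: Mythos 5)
Your overall route is the paper's: uniform bounds on the interpolant $q^\ve=\mathcal Q^{\ast,bl}_\ve(w^\ve)$ from Lemma~\ref{4-5} together with its constancy in $x_n$, compactness in $(t,\hat x)$ to produce $w$, and then transfer to the unfolded object. Your handling of the first two convergences in \eqref{conver_th_22} is correct and matches the paper in substance: the identity $\nabla_y\mathcal T^{bl}_\ve(\phi)=\ve\,\mathcal T^{bl}_\ve(\nabla\phi)$ makes the $y$-gradient of $\mathcal T^{bl}_\ve(\mathcal Q^{\ast,bl}_\ve(w^\ve)^\sim)$ of order $\ve$, and the $O(\ve)$ cell-oscillation estimate identifies the limit with $w$.

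The genuine gap is in the third convergence, and your closing paragraph names the wrong mechanism for it. By the same scaling identity, the statement that $\nabla_y$ of the unfolded interpolant is $O(\ve)$ is \emph{equivalent} to the statement that $\mathcal T^{bl}_\ve(\nabla_{\hat x}\mathcal Q^{\ast,bl}_\ve(w^\ve)^\sim)$ is bounded in $L^p(\hat\Lambda_T\times Z)$; and boundedness of an unfolded gradient is precisely the generic hypothesis under which the compactness results produce a limit of the form $\nabla_{\hat x}w+\nabla_y w_1$ with a nontrivial $\hat Z$-periodic corrector $w_1$. The original sequence $w^\ve$ satisfies the very same scaling, its unfolded $y$-gradient is likewise $O(\ve)$, and yet Theorem~\ref{4-8} yields a corrector for it: smallness of $\nabla_y\mathcal T^{bl}_\ve(\cdot)$ cannot rule out a $y$-dependent limit for the rescaled object $\mathcal T^{bl}_\ve(\nabla_{\hat x}\cdot)=\ve^{-1}\nabla_y\mathcal T^{bl}_\ve(\cdot)$, since weak limits of $\ve^{-1}$ times a strongly vanishing sequence are unconstrained. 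Matching the $y$-averages with discrete gradients of nodal values, as you propose, only identifies the $Z$-average of the weak limit, not the limit itself. What actually kills the corrector --- and this is the heart of the paper's proof --- is structural, not a matter of scaling: applying Theorem~\ref{4-3} to $\mathcal Q^{\ast,bl}_\ve(w^\ve)$ restricted to $\hat K\times(0,\ve)$, with $\hat K$ relatively compact in $\hat\Lambda$, gives $\mathcal T^{bl}_\ve(\nabla_{\hat x}\mathcal Q^{\ast,bl}_\ve(w^\ve))\rightharpoonup\nabla_{\hat x}w+\nabla_y w_{1,\hat K}$, and because $\mathcal Q^{\ast,bl}_\ve(w^\ve)$ is piecewise $Q_1$, the corrector $w_{1,\hat K}$ is a polynomial of degree at most one in each $y_j$, constant in $y_n$, and $\hat Z$-periodic, hence constant, so that $\nabla_y w_{1,\hat K}=0$; the global bound on $\mathcal T^{bl}_\ve(\nabla_{\hat x}\mathcal Q^{\ast,bl}_\ve(w^\ve)^\sim)$ then upgrades this to weak convergence on all of $\hat\Lambda_T\times Z$. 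Your citation of \cite{Cioranescu:2012} points at the right fact, but the periodicity-plus-polynomial argument is the step your proposal is missing, and the reason you supply in its place would not survive scrutiny.
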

\begin{proof}[Sketch of proof]
The assumptions on $w^\ve$, the estimates in Lemma~\ref{4-5}, and the definition of $\mathcal Q^{\ast,bl}_\ve$ ensure the boundedness of $\mathcal Q^{\ast,bl}_\ve(w^\ve)^\sim$, its time derivative,  and  $\nabla_{\hat x} \mathcal Q^{\ast,bl}_\ve(w^\ve)^\sim$ in $L^p(\hat \Lambda_T)$.  Hence, there exists a function $w \in L^p(0,T; W^{1,p}(\hat \Lambda))$ such that 
$\mathcal Q^{\ast,bl}_\ve(w^\ve)^\sim \to w$ weakly in $L^p(\hat \Lambda_T)$ and strongly in $L^p(0,T; L^p_{\text{loc}}(\hat \Lambda))$, 
and $\nabla_{\hat x} \mathcal Q^{\ast,bl}_\ve(w^\ve)^\sim \rightharpoonup \nabla_{\hat x} w$  weakly in $L^p(\hat \Lambda_T)$.  Then, by the properties of $\mathcal T_\ve^{bl}$ (see e.g.,  \cite{Cioranescu:2008, Jaeger:2006}), and using the fact that  $\mathcal Q^{\ast,bl}_\ve(w^\ve)$ is constant in $x_n$,  we obtain the first two convergence results in~\eqref{conver_th_22}.

 Lemma~\ref{4-5} and the definition of   $\mathcal Q^{\ast,bl}_\ve$
ensure the boundedness of $\mathcal Q^{\ast,bl}_\ve(w^\ve)|_{\hat K\times(0,\ve)}$ in $L^p(0,T; W^{1,p}(\hat K\times (0,\ve)))$, where $\hat K \subset \hat \Lambda$ is a relatively compact open set and  $\mathcal Q^{\ast,bl}_\ve(w^\ve)|_{\hat K\times (0,\ve)}$ is constant with respect to $x_n$.
Then, using Theorem~\ref{4-3}, we obtain  the existence of a function $w_{1, \hat K} \in L^p(\hat K_T; W^{1,p}(Z))$, which is constant in $y_n$ and $\hat Z$-periodic,  such that 
$$
\mathcal T_\ve^{bl} ( \nabla_{\hat x} \mathcal Q^{\ast,bl}_\ve(w^\ve)|_{\hat K}) \rightharpoonup \nabla_{\hat x} w + \nabla_{\hat y} w_{1, \hat K} \quad \text{ weakly in } L^p(\hat K_T\times   Z). 
$$
Due to the fact that $w_{1,K}$ is a polynomial of degree less or equal to one in each $y_j$, $j=1, \ldots, n-1$, and it is  constant with respect to $y_n$ and $\hat Z$-periodic, it follows that $w_{1,K}$ is constant in $y$. Then, since  $ \nabla_{\hat x} \mathcal Q^{\ast,bl}_\ve(w^\ve)^\sim$ is bounded in $L^p(\Lambda^\ve\times (0,T))$, and hence  $\mathcal T^{bl}_\ve ( \nabla_{\hat x} \mathcal Q^{\ast,bl}_\ve(w^\ve)^\sim)$ is bounded in $L^p(\hat \Lambda_T\times Z)$, we obtain the  last convergence in \eqref{conver_th_22}. 
\end{proof}

The estimates for $\mathcal R^{\ast,bl}_\ve(w^\ve)$ along with the convergence of $\mathcal T_\ve^{\ast, bl}(\ve^{-1}\mathcal R^{\ast,bl}_\ve(w^\ve))$, given by Theorem~\ref{4-4},  (and by using  Theorem~\ref{th_22})  imply the following result.
\begin{theorem}\label{4-8}
Let $\{w^\ve\} \subset L^p(0,T; W^{1,p}(\Lambda^\ve_l))\cap W^{1,p}(0,T;  L^p(\Lambda^\ve_l))$, $p \in (1, + \infty)$, with 
$\frac 1 \ve \| w^\ve\|^p_{L^p(0,T;W^{1,p}(\Lambda^\ve_l))} + \frac 1 \ve \| \partial_t w^\ve\|^p_{L^p((0,T)\times\Lambda^\ve_l)} \leq C. $
Then there exist a subsequence (denoted again by $\{w^\ve\}$) and  functions  $w \in L^p(0,T;W^{1,p}(\hat \Lambda))$ and  $w_1 \in L^p(\hat \Lambda_T; W^{1,p}(Z_l))$  such that  $w_1$ is $\hat Z-$periodic and
\begin{eqnarray*}
&\mathcal  T^{\ast, bl}_\ve(w^\ve) \rightharpoonup w& \text{ weakly in } L^p(\hat \Lambda_T; W^{1,p}(Z_l)), \\
&\mathcal  T^{\ast, bl}_\ve(w^\ve) \to w & \text{ strongly in } L^p(0,T; L^p_{\text{loc}}(\hat \Lambda; W^{1,p}(Z_l))), \\
&\mathcal  T^{\ast, bl}_\ve(\nabla w^\ve) \rightharpoonup \nabla_{\hat x} w + \nabla_y w_1 &\text{ weakly in } L^p(\hat \Lambda_T\times Z_l)\; .
\end{eqnarray*}
\end{theorem}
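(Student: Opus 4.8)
The plan is to exploit the decomposition $w^\ve = \mathcal{Q}^{\ast,bl}_\ve(w^\ve)^\sim + \mathcal{R}^{\ast,bl}_\ve(w^\ve)$ on $\Lambda^\ve_{\mathcal Y, l}$, handling the two pieces separately: the interpolant $\mathcal{Q}^{\ast,bl}_\ve(w^\ve)$ carries the macroscopic information and is controlled by Theorem~\ref{th_22}, while the remainder $\mathcal{R}^{\ast,bl}_\ve(w^\ve)$ carries the oscillatory corrector and is controlled by Theorem~\ref{4-4}. Since the boundary-layer cells in $\Lambda^\ve_l\setminus\Lambda^\ve_{\mathcal Y, l}$ have measure tending to zero, they do not affect any of the limits, and I would dispose of them at the outset.

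First I would verify the hypotheses of Theorem~\ref{4-4} for the rescaled remainder $v^\ve := \ve^{-1}\mathcal{R}^{\ast,bl}_\ve(w^\ve)$. From the assumed bound $\ve^{-1}\|\nabla w^\ve\|^p_{L^p((0,T)\times\Lambda^\ve_l)}\le C$ one has $\|\nabla w^\ve\|^p_{L^p}\le C\ve$, and the estimates of Lemma~\ref{4-5} give $\|\mathcal{R}^{\ast,bl}_\ve(w^\ve)\|^p_{L^p}\le C\ve^p\|\nabla w^\ve\|^p_{L^p}\le C\ve^{p+1}$ together with $\|\nabla\mathcal{R}^{\ast,bl}_\ve(w^\ve)\|^p_{L^p}\le C\|\nabla w^\ve\|^p_{L^p}\le C\ve$. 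Consequently $\ve^{-1}\|v^\ve\|^p_{L^p}\le C$ and $\ve^{p-1}\|\nabla v^\ve\|^p_{L^p}\le C$, which are exactly the hypotheses of Theorem~\ref{4-4} (applied with time as a parameter). This yields a $\hat Z$-periodic $w_1\in L^p(\hat\Lambda_T;W^{1,p}(Z_l))$ with $\mathcal{T}^{\ast,bl}_\ve(v^\ve)\rightharpoonup w_1$ and $\ve\,\mathcal{T}^{\ast,bl}_\ve(\nabla v^\ve)\rightharpoonup\nabla_y w_1$; multiplying by $\ve$ gives $\mathcal{T}^{\ast,bl}_\ve(\mathcal{R}^{\ast,bl}_\ve(w^\ve))\to 0$ strongly and $\mathcal{T}^{\ast,bl}_\ve(\nabla\mathcal{R}^{\ast,bl}_\ve(w^\ve))=\ve\,\mathcal{T}^{\ast,bl}_\ve(\nabla v^\ve)\rightharpoonup\nabla_y w_1$.

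Next I would invoke Theorem~\ref{th_22} for the interpolant. Restricting its conclusions from $Z$ to $Z_l$, the sequence $\mathcal{T}^{\ast,bl}_\ve(\mathcal{Q}^{\ast,bl}_\ve(w^\ve)^\sim)$ converges weakly, and strongly in $L^p(0,T;L^p_{\rm loc}(\hat\Lambda;W^{1,p}(Z_l)))$, to the $y$-independent limit $w\in L^p(0,T;W^{1,p}(\hat\Lambda))$, while $\mathcal{T}^{\ast,bl}_\ve(\nabla\mathcal{Q}^{\ast,bl}_\ve(w^\ve)^\sim)\rightharpoonup\nabla_{\hat x} w$ (the $x_n$-component vanishing because $\mathcal{Q}^{\ast,bl}_\ve(w^\ve)$ is constant in $x_n$). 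Adding the two pieces then delivers the three asserted convergences: for the function, $\mathcal{T}^{\ast,bl}_\ve(w^\ve)\rightharpoonup w$, where the weak $W^{1,p}(Z_l)$-convergence follows since $\nabla_y\mathcal{T}^{\ast,bl}_\ve(w^\ve)=\ve\,\mathcal{T}^{\ast,bl}_\ve(\nabla w^\ve)\to 0=\nabla_y w$; for the gradient, $\mathcal{T}^{\ast,bl}_\ve(\nabla w^\ve)=\mathcal{T}^{\ast,bl}_\ve(\nabla\mathcal{Q}^{\ast,bl}_\ve(w^\ve)^\sim)+\mathcal{T}^{\ast,bl}_\ve(\nabla\mathcal{R}^{\ast,bl}_\ve(w^\ve))\rightharpoonup\nabla_{\hat x} w+\nabla_y w_1$; and the strong convergence is inherited from the strong convergence of the $\mathcal{Q}$-part and the vanishing of the $\mathcal{R}$-part.

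The step I expect to require the most care is the bookkeeping of the powers of $\ve$, in particular keeping $\mathcal{T}^{\ast,bl}_\ve(\nabla\mathcal{R}^{\ast,bl}_\ve(w^\ve))$ and $\nabla_y\mathcal{T}^{\ast,bl}_\ve(\mathcal{R}^{\ast,bl}_\ve(w^\ve))$ distinct: the unfolding identity $\ve\,\mathcal{T}^{\ast,bl}_\ve(\nabla\,\cdot\,)=\nabla_y\mathcal{T}^{\ast,bl}_\ve(\,\cdot\,)$ means they differ by a factor $\ve$, so the former produces the nontrivial corrector $\nabla_y w_1$ while the latter vanishes. A secondary technical point is the routine promotion of Theorem~\ref{4-4} to the time-dependent setting and the verification that the boundary-layer cells contribute nothing in the limit.
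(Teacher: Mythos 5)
Your proposal is correct and is essentially the paper's own argument: the paper proves Theorem \ref{4-8} in one line by combining the estimates for $\mathcal R^{\ast,bl}_\ve(w^\ve)$ from Lemma \ref{4-5}, the convergence of $\mathcal T^{\ast,bl}_\ve(\ve^{-1}\mathcal R^{\ast,bl}_\ve(w^\ve))$ given by Theorem \ref{4-4}, and Theorem \ref{th_22} for the interpolant part, which is precisely your $\mathcal Q+\mathcal R$ decomposition with the same two key results. Your $\ve$-bookkeeping (the bounds $\|\mathcal R^{\ast,bl}_\ve(w^\ve)\|^p_{L^p}\le C\ve^{p+1}$, $\|\nabla\mathcal R^{\ast,bl}_\ve(w^\ve)\|^p_{L^p}\le C\ve$, verifying the hypotheses of Theorem \ref{4-4} for $\ve^{-1}\mathcal R^{\ast,bl}_\ve(w^\ve)$) correctly fills in the details the paper leaves implicit.
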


Finally,  using the notion of two-scale convergence and the properties of the unfolding operator, we can prove the following lemma.

\begin{lemma}\label{lem:convergence}
The following hold.
\begin{itemize}
\item[ 1.] There exist subsequences of $\{ \v_l^\ve\}$, $\{p^\ve_l\}$, $\{ \hat \v_l^\ve\}$, and $\{\hat p^\ve_l\}$  (denoted again by $\{ \v_l^\ve\}$, $\{p^\ve_l\}$, $\{ \hat \v_l^\ve\}$, and $\{\hat p^\ve_l\}$) and functions 
$\v_l \in L^2(\Omega; H^1_\text{per}(Y_l))$, $p_l \in L^2(\Omega\times Y_l)$,
$\hat \v_l \in L^2(\hat \Lambda; H^1(Z_l))$,  and $\hat p \in L^2(\hat\Lambda \times Z)$ such that $\hat \v_l$ is $\hat Z-$periodic$, \hat p_l=\hat p|_{\hat \Lambda \times Z_l}$,
  and as $\ve \to 0$
\begin{eqnarray*}
\begin{aligned}
&\v_l^\ve \to \v_l,\; \;  \ve \nabla \v^\ve_l \to \nabla_y \v_l, \quad && p_l^\ve=P_l^\ve\chi_{\Omega^\ve_l} \to p_l   && \text{ two-scale},\\
&\hat \v_l^\ve \to \hat \v_l ,\; \; \ve \nabla \hat\v^\ve_l \to \nabla_y \hat\v_l, \quad  &&\hat P^\ve \to \hat p, \quad    \hat p^\ve_l=\hat P^\ve\chi_{\Lambda^\ve_l} \to \hat p_l && \text{ two-scale}.
\end{aligned}
\end{eqnarray*} 
\item[2.]
There exist subsequences of $\{c^\ve_l\}$ and $\{\hat c^\ve_j \}$ (denoted again by $\{c^\ve_l\}$, $\{\hat c^\ve_j\} $) and $c_l \in L^2(0,T; H^1(\Omega))$, $\partial_t c_l \in L^2(\Omega_T)$, 
$c_l^1 \in L^2( \Omega_T; H^1_\text{per} (Y_l))$,  
$\hat c_j \in L^2(0, T; H^1(\hat\Lambda))$, $\hat c^1_j \in L^2(\hat\Lambda_T; H^1( Z_j))$,   and  $ \partial_t \hat c_j\in L^2(\hat\Lambda_T)$ such that $\hat c^1_j$ is $\hat Z-$periodic 
 and as $\ve \to 0$ 
\begin{equation}\label{conc-conver_2}
\begin{aligned}
&\mathcal T^{\ast}_\ve(c_l^\ve) \rightharpoonup c_l \;  && \text{ weakly in } L^2( \Omega_T; H^1(Y_l)) \; ,
\\ &\mathcal T^{\ast}_\ve(c_l^\ve)\to c_l  \; \;   &&\text{ strongly in } L^2(0,T;L^2_{\text{loc}}( \Omega; H^1(Y_l))),\\
  &\partial_t \mathcal T^\ast_\ve(c_l^\ve) \rightharpoonup \partial_t c_l  && \text{ weakly in } L^2(\Omega_T\times Y_l), \;  \\ &
  \mathcal T^{\ast}_\ve(\nabla c_l^\ve) \rightharpoonup \nabla c_l + \nabla_y c_l^1 &&  \text{ weakly in } L^2(\Omega_T\times Y_l), 
   \end{aligned}
\end{equation}
  \begin{equation}\label{conc-conver_3}
\begin{aligned}
 &\mathcal T^{\ast,bl}_\ve(\hat c^\ve_j) \rightharpoonup \hat c_j  \;  &&  \text{ weakly in } L^2(\hat\Lambda_T; H^1(Z_{j})), 
   \\ & \mathcal T^{\ast,bl}_\ve(\hat c^\ve_j) \to \hat c_j  && \text{ strongly in } L^2(0,T; L^2_{\text{loc}}(\hat \Lambda; H^1(Z_{j}))),\\
    &\partial_t \mathcal T^{\ast,bl}_\ve(\hat c^\ve_j) \rightharpoonup  \partial_t \hat c_j \;   && \text{ weakly in } L^2(\hat \Lambda_T\times Z_{j}),   \\ &
  \mathcal T^{\ast,bl}_\ve(\nabla \hat c^\ve_j) \rightharpoonup \nabla \hat c_j + \nabla_y \hat c^1_j \; && \text{ weakly in } L^2(\hat \Lambda_T\times Z_{j}), 
   \end{aligned}
\end{equation}
and 
  \begin{equation}\label{conc-conver_4}
\begin{aligned}
  &  \mathcal T^b_\ve( c_l^\ve) \rightharpoonup c_l  \;  &&  \text{ weakly in }  L^2(\Omega_T\times\Gamma_l), 
   \\ &  \mathcal T^{b, bl}_\ve(\hat c^\ve_j)  \rightharpoonup \hat c_j  \;  && \text{ weakly in }  L^2(\hat\Lambda_T\times R_{av} ), 
  \end{aligned}
\end{equation}
\end{itemize}
where $l=a,v, s$ and $j=av, s$. Here,  $\hat c^\ve_{av}=\hat c^\ve_a\chi_{\Lambda_a^\ve} +\hat c^\ve_v\chi_{\Lambda_v^\ve}$, $\Gamma_s= \Gamma_a\cup \Gamma_v$, $R_{av}= R_a\cup R_v$, and $Z_{av}= {\rm Int}(  \overline Z_a \cup  \overline Z_v)$.
\end{lemma}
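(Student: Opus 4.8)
The plan is to obtain every convergence from the uniform bounds of Lemma~\ref{Lemma:aprior} combined with the compactness theory of two-scale convergence and the thin-layer unfolding results of Theorems~\ref{4-4} and~\ref{4-8}. I would organize the argument into three blocks matching the three groups of limits in the statement: velocities and pressures, bulk and thin-layer concentrations, and boundary traces.

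For the velocity fields and pressures (part~1), the estimate \eqref{apriori} provides exactly $\|\v_l^\ve\|_{L^2(\Omega_l^\ve)}+\ve\|\nabla\v_l^\ve\|_{L^2(\Omega_l^\ve)}\le C$, which is the hypothesis for two-scale compactness of sequences with bounded scaled gradients; the standard theory then yields $\v_l\in L^2(\Omega;H^1_{\text{per}}(Y_l))$ with $\v_l^\ve\to\v_l$ and $\ve\nabla\v_l^\ve\to\nabla_y\v_l$ two-scale. In the thin layer I apply the rescaled unfolding operator $\mathcal T_\ve^{\ast,bl}$: the bound $\ve^{-1/2}\|\hat\v_l^\ve\|_{L^2(\Lambda_l^\ve)}+\ve^{1/2}\|\nabla\hat\v_l^\ve\|_{L^2(\Lambda_l^\ve)}\le C$ is precisely the hypothesis of Theorem~\ref{4-4}, producing a $\hat Z$-periodic limit $\hat\v_l\in L^2(\hat\Lambda;H^1(Z_l))$ with the stated two-scale convergences. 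The pressures carry no direct gradient control, so here I rely on the extensions $P_l^\ve$ and $\hat P^\ve$ and their bounds \eqref{apriori_extension}; boundedness in $L^2(\Omega)$, respectively the $\ve^{-1/2}$-weighted $L^2(\Lambda^\ve)$, delivers two-scale limits $p_l$ and $\hat p$, while the continuity conditions on $\Sigma^\ve$ used to glue $\hat p_a^\ve$ and $\hat p_v^\ve$ give the identity $\hat p_l=\hat p|_{\hat\Lambda\times Z_l}$.

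For the oxygen concentrations (part~2), the bulk convergences \eqref{conc-conver_2} follow from the uniform bounds in \eqref{apriori2} on $c_l^\ve$, $\nabla c_l^\ve$ and $\partial_t c_l^\ve$ through the classical unfolding compactness results in fixed perforated domains, including the strong local convergence and the two-scale splitting $\nabla c_l^\ve\rightharpoonup\nabla c_l+\nabla_y c_l^1$. The thin-layer convergences \eqref{conc-conver_3} are a direct application of Theorem~\ref{4-8}, since the $\ve^{-1}$-weighted bounds on $\hat c_j^\ve$ in $L^2(0,T;H^1(\Lambda_j^\ve))\cap H^1(0,T;L^2(\Lambda_j^\ve))$ are exactly its hypotheses; the weak limit of $\partial_t\mathcal T_\ve^{\ast,bl}(\hat c_j^\ve)$ comes from the commutation of $\partial_t$ with the unfolding operator together with the time-derivative estimate. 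The definition $\hat c_{av}^\ve=\hat c_a^\ve\chi_{\Lambda_a^\ve}+\hat c_v^\ve\chi_{\Lambda_v^\ve}$ is consistent across $\Sigma^\ve$ by the continuity condition \eqref{Diff_ContC}, so the unfolded sequence is well defined on $Z_{av}$.

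For the boundary traces (part~3), I pass to the limit with the boundary unfolding operators $\mathcal T_\ve^b$ and $\mathcal T_\ve^{b,bl}$, whose images of $c_l^\ve$ and $\hat c_j^\ve$ are bounded in $L^2(\Omega_T\times\Gamma_l)$ and $L^2(\hat\Lambda_T\times R_{av})$ by the trace inequalities \eqref{boundary_estim}; identifying the weak limits with the interior traces $c_l$ and $\hat c_j$ is then standard, using the strong local convergences already obtained to match boundary and interior values. The main obstacle I anticipate is the pressure argument in the thin layer: unlike the velocities and concentrations it has no uniform gradient estimate, so the whole convergence rests on the $H^{-1}$-functional and Ne{\u c}as-inequality construction of $\hat P^\ve$ from Lemma~\ref{Lemma:aprior}, and on carefully tracking the $\ve^{-1/2}$ scaling through the thin-layer unfolding so that the limit $\hat p$ is genuinely independent of the fast variable in the admissible directions and matches $p_l$ on $\hat\Lambda$.
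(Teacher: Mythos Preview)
Your proposal is correct and follows essentially the same route as the paper: the a~priori estimates \eqref{apriori}--\eqref{apriori2} feed directly into the two-scale compactness theorem, the unfolding compactness results of \cite{Cioranescu:2012,Cioranescu:2008_1}, and Theorem~\ref{4-8}, exactly as you outline. Two minor remarks. First, for the boundary convergences \eqref{conc-conver_4} the paper takes a slightly shorter path than yours: rather than bounding $\mathcal T^b_\ve(c_l^\ve)$ via the trace inequality \eqref{boundary_estim} and then identifying the limit through strong local convergence, it simply invokes the continuity of the trace $H^1(Y_l)\to L^2(\Gamma_l)$ together with the already-established weak convergence of $\mathcal T^\ast_\ve(c_l^\ve)$ in $L^2(\Omega_T;H^1(Y_l))$. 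Second, the concern you raise at the end---that $\hat p$ be independent of the fast variable and match $p_l$ on $\hat\Lambda$---is not part of this lemma; those facts are derived later in the proof of Theorem~\ref{5-1} by testing against oscillating functions, so you need not address them here.
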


\begin{proof}[Sketch of proof]
 Due to the continuity of concentrations on $\Sigma^\ve$, we can define 
$\hat c^\ve_{av}=\hat c^\ve_a\chi_{\Lambda_a^\ve} +\hat c^\ve_v\chi_{\Lambda_v^\ve}$.
 The {\it a priori} estimates in \eqref{apriori}, \eqref{apriori_extension} 
and  \eqref{apriori2} along with  (a) the compactness theorem for two-scale convergence, (b) related convergence results for  unfolded sequences \cite{Allaire:1992,Cioranescu:2012, Paloka:2000, Jaeger:2006, Nguetseng:1989}, and (c) Theorem~\ref{4-8}    imply the  convergence results in the statement of the lemma.

The last two convergence results in \eqref{conc-conver_4} follow from the weak convergence of $\mathcal T^{\ast}_\ve(c_l^\ve)$ and 
$\mathcal T^{\ast,bl}_\ve(\hat c^\ve_j)$ in  $L^2(\Omega_T; H^1(Y_l))$ and $L^2(\hat \Lambda_T; H^1(Z_{j}))$, respectively, along with the trace theorem  applied in $H^1(Y_l)$ and $H^1(Z_{j})$,  where  $l=a,v,s$ and $j=av, s$. 
 \end{proof}

\section {Derivation of macroscopic equations for velocity fields}\label{macro_velocity_1}

We now derive the homogenized, macroscopic equations for the arterial and venous blood velocity fields in the two tissue layers (skin tissue layer and fat tissue layer) of the adopted tissue geometry. We start with Theorem \ref{5-1}, which is the first of the main results of the paper.  
\begin{proof}[\bf Proof of Theorem~\ref{5-1}]
We first use  the following test functions in \eqref{micro_weak_flow}: 
\begin{enumerate}
\item[(a)] $\phi_l(x)=\ve\psi_l\left(x, \frac x \ve\right)$ with
 $\psi_l \in C^\infty_0(\Omega, C^\infty_{\text{per}}(Y))$ and  $\psi_l(x,y) =0$ on   $\Omega \times \Gamma_l$,  and 
 \item[(b)] $\hat \phi_l(x)=\ve\hat \psi\left(\hat x, \frac x \ve\right)$ with  $\hat \psi \in C^\infty_0(\hat \Lambda, C^\infty_{per}(\hat Z; C^\infty_0(0,1)))$  and $\hat \psi(\hat x, y) =0$ on  $\hat\Lambda\times (R_a\cup R_v)$.  
 \end{enumerate}
  Using the derived {\it a priori} estimates  and applying the two-scale convergence  of $p_a^\ve$,  $\hat p_a^\ve$, 
$p_v^\ve$, and $\hat p_v^\ve$, established in section \ref{section4},  we obtain that
\begin{equation}\label{macro_pres_1}
|Y|^{-1} \langle  p_a,  \dv_y \psi_a  \rangle_{\Omega\times Y_a}+ 
|Y|^{-1} \langle  p_v,  \dv_y \psi_v  \rangle_{\Omega\times Y_v}+ 
|\hat Z|^{-1}\langle \hat p,  \dv_y\hat \psi \rangle_{\hat \Lambda \times Z_{av}}= 0. 
\end{equation}
The last equation implies that  
\begin{enumerate}
\item[(a)] $p_l \in L^2(\Omega; H^1(Y_l))$ with
$\nabla_y p_l=0$ a.e. in $\Omega\times Y_l$, and
\item[(b)] $\hat  p  \in L^2(\hat \Lambda; H^1(Z_{av})) $ with
 $\nabla_y \hat p =0$ a.e. in $\hat\Lambda \times Z_{av}$,
\end{enumerate}
where $l=a,v$. Thus,  
$p_a=p_a(x)$, $p_v=p_v(x)$ in $\Omega$ and  $\hat p= \hat p(\hat x)$ in $\hat\Lambda$. 

The  two-scale convergence of $\v_l^\ve$ and $\hat \v_l^\ve$ at the oscillating boundaries $\Gamma_l^\ve$, $R_l^\ve$,  and $\Lambda^\ve_l \cap \{ x_n = \ve \}$ is ensured by the {\it a priori} estimates \eqref{apriori} and the boundary estimate  \eqref{boundary_estim}. This implies that  
\begin{eqnarray} \label{BC_Dirichlet_Macro}
&\v_l (x,y)= 0 \quad \text{ on } \Omega\times \Gamma_l, \quad   & \quad 
\hat \v_l (x,y) = 0 \quad \text{ on } \hat\Lambda\times (R_l  \cup\hat Z_{av}^1), \quad l=a,v,
\end{eqnarray}
where $\hat Z^1_{av} = \partial Z_{av} \cap \{y_n =1\}$. Using  $\dv \, \v_l^\ve = 0$ in $\Omega_l^\ve$  and considering   $\psi_l \in C_0^\infty(\Omega; C^\infty_{\text{per}}(Y))$, we  obtain 
\begin{eqnarray*}
0= \langle \dv \, \v_l^\ve (x) , \psi_l(x, x/\ve) \rangle_{\Omega^\ve_l} = - \langle  \v_l^\ve (x), \nabla \psi_l(x, x/\ve) + 1/\ve \nabla_y \psi_l(x, x/\ve)\rangle_{\Omega^\ve_l}. 
\end{eqnarray*}
The two-scale convergence of $\v_l^\ve$ implies that 
\begin{equation}\label{dvi_y_v}
0= \lim\limits_{\ve\to 0} \langle \v_l^\ve (x),  \nabla_y \psi_l(x, x/\ve) \rangle_{\Omega^\ve_l}
= - |Y|^{-1} \langle \dv_y \v_l(x,y),   \psi_l(x,y) \rangle_{\Omega \times Y_l}.
\end{equation}
Similarly,  using $\dv \, \hat\v_l^\ve =0 $ in $\Lambda_l^\ve$ with  $\hat \v^\ve_a = \hat \v^\ve_v$ on $\Sigma^\ve$ and $\hat \psi \in C_0^\infty(\hat\Lambda; C^\infty_{\text{per}} (\hat Z; C^\infty_0(0,1)))$, we obtain
\begin{eqnarray*} 
0=\lim\limits_{\ve\to 0} 
\langle  \dv \, \hat\v_{av}^\ve(x), \hat \psi(\hat x, x/\ve) \rangle_{\Lambda^\ve_{av}}
 &=& -|\hat Z|^{-1} \langle  \hat \v_{av}(\hat x, y),  \nabla_y \hat \psi(\hat x, y) \rangle_{\hat\Lambda\times Z_{av}}\\
 & =&   
|\hat Z|^{-1} \langle  \dv_y \hat\v_{av}(\hat x, y),  \hat \psi(\hat x, y)  \rangle_{\hat \Lambda\times Z_{av}},  
\end{eqnarray*}
where $\Lambda^\ve_{av}=  \Lambda^\ve_{a} \cup \Sigma^\ve\cup  \Lambda^\ve_{v}$. Therefore, $\dv_y \v_l =0$  in $\Omega\times Y_l$ and $\dv_y \hat \v_{av} =0 $ in $\hat\Lambda\times Z_{av}$, where $l=a,v$.

We now  consider the normal velocity $\hat\v^\ve_l\cdot \n$ on $\hat \Lambda\cap \partial\Lambda_l^\ve$. The  transmission conditions \eqref{Stokes_TransC} yield
\begin{eqnarray*}
\langle \hat\v^\ve_l\cdot \n, \hat \psi(\hat x, \hat x/\ve, 0) \rangle_{\hat \Lambda\cap \partial\Lambda_l^\ve} 
=\ve  \langle \v^\ve_l\cdot \n, \psi(\hat x, 0, \hat x/\ve, 0) \rangle_{\hat \Lambda\cap \partial\Lambda_l^\ve} 
\\ = \ve \langle \dv \v^\ve_l, \psi(x, x/\ve) \rangle_{\Omega^\ve_l} + 
\ve \langle \v^\ve_l, \nabla \psi(x,   x/\ve) \rangle_{\Omega^\ve_l},
\end{eqnarray*}
where $\hat \psi \in C^\infty(\overline{\hat \Lambda}; C^\infty_{\text{per}}(\hat Z; C^\infty[0,1]))$, 
$\psi \in C^\infty(\overline\Omega; C^\infty_{\text{per}}(Y))$ with $\psi = 0$ on $\Gamma_D\times Y$, and 
$\hat \psi(\hat x, \hat x/\ve,0)= \psi(\hat x, 0, \hat x/\ve, 0)$ on $\hat \Lambda$.
 Then using $ \dv \v^\ve_l=0$ in $\Omega_l^\ve$ and $\dv_y \v_l =0$ in $\Omega\times Y_l$, along with the  two-scale convergence of $\v^\ve_l$ and $\hat \v^\ve_l$, implies
\begin{eqnarray*}
|\hat Z|^{-1}\langle \hat\v_l\cdot \n, \hat \psi(\hat x, \hat y, 0) \rangle_{\hat \Lambda\times \hat Z_{l}^0} = 
|Y|^{-1}\langle \v_l, \nabla_y \psi(x, y) \rangle_{\Omega\times Y_l} =0.
\end{eqnarray*}
Hence, $\hat \v_l \cdot \n = 0$ on $\hat \Lambda\times \hat Z_{l}^0$, where $\hat Z_{l}^0= \partial Z_l \cap \{y_n = 0\}$.

Using  $ \dv \, \v_l^\ve = 0$ in $\Omega_l^\ve$ and taking $\psi\in C^\infty(\overline \Omega)$  yield
\begin{eqnarray}\label{div_BC_v}
0=\lim\limits_{\ve\to 0} \langle \dv \, \v_l^\ve,    \psi \rangle_{\Omega_l^\ve} =
\lim\limits_{\ve\to 0} \Big[ -\langle    \v_l^\ve,    \nabla \psi   \rangle_{\Omega_l^\ve}
+ \langle   \v_l^\ve \cdot \n ,  \psi \rangle_{\partial \Omega_l^\ve\cap(\Gamma_D\cup \hat \Lambda) }  \Big] .
\end{eqnarray}
Applying two-scale convergence in the first term on the right-hand side of \eqref{div_BC_v}  and integrating by parts imply 
\begin{equation}\label{div_BC_2}
\begin{aligned}
- \Big\langle \dv \Big[\frac 1{|Y|} \int_{Y_l} \v_l (\cdot,y) dy \Big],   \psi \Big\rangle_{\Omega} +
\Big\langle  \frac 1{|Y|}  \int_{Y_l} \v_l(\cdot,y)  dy \cdot \n ,  \, \psi \Big\rangle_{\partial \Omega}
\\ =\lim\limits_{\ve\to 0}   \langle  \v_l^\ve \cdot \n ,  \psi \rangle_{\partial \Omega_l^\ve\cap (\Gamma_D\cup\hat \Lambda)}  .
\end{aligned}
\end{equation}
Since $C^\infty_0(\Omega)$ is dense in $L^2(\Omega)$,  the last equation yields  
\begin{equation}\label{div37}
\dv \, \left(\frac 1 {|Y|} \int_{Y_l} \v_l (x,y) dy \right)  =0 \qquad  \text{ a.e. in } \, \,   \Omega, \quad \text{ for } \, \, l=a,v.
\end{equation}
Taking  $\psi \in C^\infty(\overline \Omega)$ with $\psi(x)=0$ on $\Gamma_D\cup \hat \Lambda$ in \eqref{div_BC_2}, and using the calculations above,  we obtain 
\begin{equation}\label{zero_flux_v}
\Big(\frac 1 {|Y|} \int_{Y_l} \v_l (\cdot, y) dy \Big) \cdot \n =0  \qquad  \text{on }  \, \,  \partial \hat \Omega \times(-L,0).
\end{equation}
Similarly, taking  $\psi \in C^\infty(\overline \Omega)$ with  $\psi(x)=0$ on $\hat \Lambda$ in \eqref{div_BC_2} we obtain
\begin{equation}\label{flux_v_Gamma_D}
\lim\limits_{\ve\to 0}   \langle  \v_l^\ve \cdot \n ,  \psi \rangle_{\partial \Omega_l^\ve\cap \Gamma_D}=  \Big\langle \frac 1 {|Y|} \int_{Y_l} \v_l (\cdot, y) dy \cdot \n,    \psi \Big\rangle_{\Gamma_D}.
\end{equation}
These calculations imply that 
\begin{equation}\label{BC_v_Lambda}
\lim\limits_{\ve\to 0}   \langle  \v_l^\ve \cdot \n ,  \psi \rangle_{\partial \Omega_l^\ve\cap \hat \Lambda}= 
\Big\langle \frac1{|Y|} \int_{Y_l} \v_l(\cdot,y)  dy \cdot \n ,  \, \psi \Big\rangle_{\hat\Lambda} \qquad  \text{ for } \; \; \psi \in C^\infty(\overline \Omega).
\end{equation}

We now consider a test function  $\hat \phi\in C^\infty( \Lambda^\ve)$, such that $\hat \phi$ is constant in $x_n$ and $\hat \phi(x) = 0 \text{ on } \partial \hat \Omega\times(0,\ve)$. Applying $\dv\, \hat \v^\ve_l(x)=0$ in $\Lambda^\ve_l$ with $\hat \v^\ve_l (x)=0$ on the boundaries $R^\ve_l$, $(\partial \hat \Omega \times (0,\ve))\cap \partial \Lambda_l^\ve$,  and $(\hat \Omega \times \{ \ve\})\cap \partial \Lambda^\ve_l$,  along with $\hat \v_a^\ve = \hat \v_v^\ve$ on $\Sigma^\ve$, yields 
\begin{equation}\label{BC_lambda v_hat}
0=\lim\limits_{\ve\to 0} \frac 1\ve \langle  \dv\,  \hat \v^\ve_{av},  \hat \phi \rangle_{\Lambda^\ve_{av}} =  
\lim\limits_{\ve\to 0}\big( - \frac 1\ve \langle  \hat \v^\ve_{av},  \nabla_{\hat x}\hat \phi  \rangle_{\Lambda^\ve_{av}}+   
\frac 1\ve  \langle  \hat \v^\ve_{av} \cdot \hat \n,  \hat \phi \rangle_{\partial \Lambda^\ve_{av}\cap \hat \Lambda}  \big),
\end{equation}
where $\hat \v^\ve_{av}=\hat \v^\ve_a \chi_{\Lambda^\ve_a} + \hat \v^\ve_v \chi_{\Lambda^\ve_v}$.
 The transmission condition $\frac 1 \ve \hat \v_l^\ve \cdot \hat\n = \v_l^\ve \cdot \hat\n$ on $\hat \Lambda \cap \partial \Omega_l^\ve$ along with   the two-scale convergence of $\hat \v^\ve_l$  and the convergence in \eqref{BC_v_Lambda} imply  
\begin{eqnarray*}
|\hat Z|^{-1}\langle  \hat \v_{av},\nabla_{\hat x}  \hat \phi  \rangle_{\hat \Lambda\times Z_{av}} &=&
\langle |Y|^{-1}  \v_a \cdot \hat\n ,  \, \hat \phi \rangle_{\hat\Lambda\times Y_a} + \langle |Y|^{-1}  \v_v \cdot \hat\n ,  \, \hat \phi \rangle_{\hat\Lambda\times Y_v},  
\end{eqnarray*}
where $\hat \n$ is the external normal vector to $\partial \Lambda^\ve\cap \hat \Lambda$. Thus 
\begin{equation}\label{bc_hat_Lambda}
\dv_{\hat x} \Big(\frac 1{|\hat Z|} \int_{Z_{av}} \hat \v_{av} \,  dy\Big) = \frac1{|Y|} \int_{Y_a} \v_a  \, dy \cdot \n + \frac1{|Y|} \int_{Y_v} \v_v \,   dy \cdot \n  \; \;  \text{ on } \;  \hat \Lambda,
\end{equation}
where $\n$ is the external normal vector to $\partial \Omega\cap\hat \Lambda$, 
 and 
$$
\frac 1{|\hat Z|} \int_{Z_{av}} \hat \v_{av} (x, y) dy \cdot \n  =0 \quad \text{ for  } \; \; x \in \partial \hat \Lambda. 
$$

Considering $\v^\ve= \v^\ve_a \chi_{\Omega_a^\ve} +  \v^\ve_v \chi_{\Omega_v^\ve} +  \ve^{-1}\hat \v^\ve_a \chi_{\Lambda_a^\ve} +  \ve^{-1}\hat \v^\ve_v \chi_{\Lambda_v^\ve} $ we obtain 
\begin{eqnarray*}
0=\int_{\Omega_{av}^\ve\cup \Lambda_{av}^\ve} \dv \v^\ve dx = 
\int_{\Gamma_D\cap \partial\Omega^\ve_a} \v^\ve_a \cdot \n \, d\hat x + \int_{\Gamma_D\cap \partial\Omega^\ve_v} \v^\ve_v \cdot \n \, d\hat x,
\end{eqnarray*}
where $ \Omega^\ve_{av}=\Omega^\ve_a\cup\Omega^\ve_v$.
Then the convergence in \eqref{flux_v_Gamma_D} yields 
\begin{equation}\label{BC_overeg_Gamma}
\frac 1 {|Y|}\int_{\Gamma_D} \Big[\int_{Y_a} \v_a (\cdot, y) dy +  \int_{Y_v} \v_v (\cdot, y) dy \Big]\cdot \n \, d\hat x=0.
\end{equation}
Considering $\dv \big(\int_{Y_a} \v_a  dy +\int_{Y_v} \v_v dy \big)=0 $ in $\Omega$   and using  \eqref{BC_overeg_Gamma}  imply
\begin{equation}\label{over_flux_Lambda}
\int_{\hat \Lambda} \Big[\int_{Y_a} \v_a  (\cdot, y)\,  dy +\int_{Y_v} \v_v (\cdot, y)\,  dy \Big] \cdot \n \, d\hat x =0.
\end{equation}

 We now consider functions $\psi_l$ and $\hat \psi$ such that 
\begin{enumerate}
\item[(a)]  $\psi_l \in C^\infty(\overline\Omega; C^\infty_{\text{per}}(Y))$, $\dv_y \psi_l =0$ in $\Omega\times Y$,    $\psi_l =0 $ on  $(\partial \hat \Omega\times(-L,0)\cup \Gamma_D)\times Y$ and on  $\Omega \times \Gamma_l$, 
\item[(b)]   $\hat \psi \in  C^\infty_0(\hat\Lambda; C^\infty_{\text{per}}(\hat Z; C^\infty[0,1]))$,
  $\dv_y \hat \psi =0$ in $\hat \Lambda \times Z$,   $\hat \psi = 0$ on $\hat \Lambda\times (R_{av}\cup \hat Z^1_{av})$.
\end{enumerate}  
Then we choose  $\phi_l(x) = \psi_l(x, \frac x \ve)$  and  $\hat \phi_l(x) =  \hat \psi(x, \frac x \ve)$, $l=a,v$, as  test functions in \eqref{micro_weak_flow}. The  two-scale convergence of  $(\v_l^\ve, p^\ve_l)$ and $(\hat \v_l^\ve, \hat p^\ve_l)$, with $l=a,v$,  implies 
\begin{align}\label{macro_veloc_1}
\begin{aligned}
&\quad \, \, \frac 1{|Y|}\sum_{l=a,v}\Big(\langle  2 \mu S_y \v_l,  S_y \psi_l \rangle_{\Omega\times Y_l} -  \langle  p_l,  \dv_x \psi_l \rangle_{\Omega\times Y_l}  -  \frac 1{L} \langle p_l^0, \psi_{l,n} \rangle_{\Omega\times Y_l}  \Big) \\ & + \frac 1 {|\hat Z|}\left(\langle 2 \mu S_y \hat \v_{av},  S_y \hat \psi  \rangle_{\hat \Lambda\times Z_{av}}    -
\langle   \hat p,  \dv_{\hat x} \hat \psi \rangle_{\hat\Lambda \times Z_{av}} \right)
= 0 .
\end{aligned}
\end{align} 
We consider functions  $\psi_l$ and $\hat \psi$ such that
\begin{enumerate}
\item[(a)]  $\psi_l \in C^\infty_0(\Omega, C^\infty_{\text{per}}(Y))$  with $\dv_y \psi_l =0$,  $\psi_l = 0$ on $\Omega \times \Gamma_l$, and 
\item[(b)] $\hat \psi \in C^\infty_0(\hat \Lambda, C^\infty_{\text{per}}(\hat Z; C^\infty_0(0,1)))$ with 
$\dv_y \hat \psi =0$, $\dv_{\hat x} \langle\hat \psi, 1\rangle_{Z_{av}} =0$, and $\hat \psi(\hat x, y) = 0$ on $\hat \Lambda\times R_{av}$.
\end{enumerate}
Using the  characterization of the  orthogonal complement to the  space of divergence-free functions (see, e.g.,   \cite{Hornung}), we obtain the existence of $p_l^1 \in L^2(\Omega\times Y_l)/\mathbb R$,  $\hat p_{av}^1 \in L^2(\hat\Lambda\times Z_{av})/\mathbb R$, and $\tilde p \in H^1(\hat \Lambda)/\mathbb R$ 
 such that 
\begin{align}\label{macro_two-scale}
\begin{aligned}
& -\mu \Delta_y \v_l + \nabla_x p_l + \nabla_y p^1_l= \frac 1 L p_l^0 \, \textbf{e}_n \quad  &&\text{ in } \Omega\times Y_l, &&  \qquad l=a,v, \\
& -\mu \Delta_y \hat \v_{av} + \nabla_{\hat x} \tilde p + \nabla_y \hat p^1_{av} = 0  \quad  &&\text{ in } \hat \Lambda\times Z_{av}. 
\end{aligned}
\end{align}
Combining equations \eqref{macro_two-scale} and \eqref{macro_veloc_1},  and considering
$\hat \psi \in  C^\infty(\overline{\hat\Lambda}; C^\infty_{\text{per}}(\hat Z; C^\infty[0,1]))$ with 
 $\dv_y \hat \psi =0$ in $\hat \Lambda \times Z$, $\langle\hat \psi, 1\rangle_{Z_{av}} \cdot \n =0$ on $\partial \hat \Lambda$, and    $\hat \psi = 0$ on $\hat \Lambda\times (R_{av}\cup \hat Z^0_{av}\cup \hat Z^1_{av} )$, we obtain 
\begin{eqnarray*}    
|\hat Z|^{-1} \langle   \hat p - \tilde p,  \dv_{\hat x} \hat \psi \rangle_{\hat\Lambda \times Z_{av}} +
|Y|^{-1} \langle  p_a, \psi\cdot \n \rangle_{\hat \Lambda \times Y_a} +  |Y|^{-1} \langle  p_v, \psi\cdot \n \rangle_{\hat \Lambda \times Y_v} =0.
\end{eqnarray*}
Thus using equality \eqref{bc_hat_Lambda} we obtain   $p_a= p_v= \hat p$  and $\tilde p = 2 \hat p$ on $\hat \Lambda$. 

   Relaxing now the assumptions on $\hat \psi $ and using
   $\hat \psi \cdot \n= 0$ on $\hat \Lambda\times \hat Z^0_{av}$ imply
  $$ (2\mu \S_y  \hat \v_{av} - \hat p_{av}^1 I ) \, \n\times \n = 0 \qquad \text{ on } \hat\Lambda\times  \hat Z_{av}^0. $$
Setting $\bar p_l= p_l - p_l^0 \frac {x_n} L$ and omitting the bar for the sake of clarity, we obtain the two-scale model 
\begin{align}\label{two-scale_Omega}
\begin{aligned}
-\mu\Delta_y \v_l + \nabla_x p_l + \nabla_y p_l^1 = 0, & \qquad \dv_y \v_l =0  \quad   \text{ in } \Omega\times Y_l , \quad l=a,v\\ 
\v_l = 0 \quad   \text{ on } \Omega\times \Gamma_l ,  & \qquad 
\v_l, \, p_l^1 \quad \text{ are }  Y-\text{periodic}, \\
p_l = p_l^0  \quad     \text{ on } \Gamma_D\times Y_l  
\end{aligned}
\end{align}
and 
\begin{align}\label{two-scale_Lambda}
\begin{aligned}
&-\mu\Delta_y \hat \v_{av} +2 \nabla_{\hat x} \hat p + \nabla_y \hat p_{av}^1 =0, \quad && \dv_y \hat \v_{av} =0\quad  && \text{ in } \hat \Lambda \times Z_{av} , \\
&(2 \mu S_y \hat \v_{av} - \hat p^1_{av} I)\, \n\times \n= 0, && \hat \v_{av}\cdot \n =0 && \text{ on } \hat \Lambda \times \hat Z_{av}^0,    \\
&\hat \v_{av} =0 \qquad   \text{ on } \hat \Lambda\times (R_{av}\cup \hat Z_{av}^1),  &&
 \hat\v_{av}, \, \hat p_{av}^1 && \text{ are } \hat Z-\text{periodic}.
\end{aligned}
\end{align}

Finally, for $(x,y)\in \Omega\times Y_l$ and  $(\hat x,y) \in \hat \Lambda\times Z_{av}$, we consider the ansatz 
\begin{eqnarray}\label{ansatz_fluid}
\begin{aligned}
& \v_l(x,y)=- \sum\limits_{j=1}^n \partial_{x_j} p_l(x) \omega^j_l(y), \quad &&
p_l^1(x,y)= -\sum\limits_{j=1}^n \partial_{x_j} p_l(x) \pi^j_l(y),  \\
& \hat \v_{av}(\hat x, y)=- 2\sum\limits_{j=1}^{n-1} \partial_{x_j} \hat p(\hat x)  \hat \omega^j (y), \quad && 
\hat p_{av}^1(\hat x, y)= -2\sum\limits_{j=1}^{n-1} \partial_{x_j} \hat p(\hat x) \hat \pi^j(y) ,  
\end{aligned}
\end{eqnarray}
where $l=a,v$, and  $(\omega^j_l, \pi_l^j)$, $(\hat \omega^j, \hat \pi^j)$ are solutions of 
the unit cell problems \eqref{eq:omega1} and  \eqref{eq:omega2} respectively. 
Applying the ansatz \eqref{ansatz_fluid} to  equations  \eqref{two-scale_Omega} and \eqref{two-scale_Lambda}, and using equations \eqref{div37} and \eqref{bc_hat_Lambda}, yields the macroscopic equations  \eqref{macro_macro_1} and \eqref{macro_macro_2}  for $\v^0_l(\cdot) = \frac 1 {|Y|} \int_{Y_l} \v_l(\cdot,y)  dy$,  $p_l$,   $\hat \v^0_{av} (\cdot) = \frac 1 {|\hat Z|} \int_{Z_{av}} \hat \v_{av}(\cdot,y)  dy$, and $\hat p$. The integral condition in \eqref{over_flux_Lambda} ensures the well-posedness of the macroscopic model 
 \eqref{macro_macro_2}. Considering the differences of two solutions  $p_l^1- p_l^2$ and $\hat p^1- \hat p^2$
of \eqref{macro_macro_1} and  \eqref{macro_macro_2}, and using the Dirichlet boundary conditions on $\Gamma_D$ and the continuity conditions on $\hat \Lambda$, we obtain the uniqueness of the solution of the macroscopic model. 
\end{proof}

\section{Derivation of macroscopic equations for oxygen concentrations}

In this section, we continue our derivation of the homogenized equations for the microscopic system \eqref{Stokes_Omega}--\eqref{Diff_InitC} by turning our attention to the oxygen concentrations in arterial blood, venous blood, and tissue. Theorem~\ref{6-1} provides the macroscopic equations dictating the dynamics of the various oxygen concentrations as $\ve\rightarrow 0$, and it complements Theorem~\ref{5-1} that was proven in the previous section. For the remainder of this section, we define $\hat \v_{av}(\hat x,y)=\hat \v_a(\hat x,y)\chi_{Z_a}(y) +\hat \v_v(\hat x,y) \chi_{Z_v}(y) $ for a.a. $(\hat x,y) \in \hat \Lambda \times Z_{av}$.

\begin{proof}[\bf Proof of Theorem \ref{6-1}]
We consider $\psi_l(t,x) = \phi^1_l(t,x) + \ve \phi^2_l(t,x, \frac x \ve)$, for $l=a,v$, and  $\hat \psi(t,x) = \hat \phi_1(t,\hat x) + \ve\hat \phi_2(t,\hat x, \frac x \ve) $ as test functions in \eqref{micro_weak_av},  where 
\begin{enumerate}
\item[(a)] $ \phi^1_l \in C^\infty(\overline \Omega_T)\cap  L^2(0,T;W(\Omega))$ with $\phi^1_l(t, \hat x, 0) =\hat \phi_1(t,\hat x)$ in $\hat \Lambda_T$, and 
$\phi_l^2 \in C^\infty_0(\Omega_T; C_{per} (Y))$
\item[(b)]  $\hat \phi_1 \in C^\infty(\overline{\hat \Lambda_T})$ and $\hat \phi_2 \in C^\infty_0(\hat \Lambda_T; C_{per}^\infty (\hat Z; C_0^\infty(0,1)))$.
\end{enumerate}
Considering  $\Omega^\delta= \{ x\in \Omega : \text{dist}(x,\partial \Omega) > \delta\} $ and $\tilde \Omega^{\ve,\delta}_l= \{ x\in \Omega^\ve_l : \text{dist}(x,\partial \Omega^\ve_l) > \delta\} $
we can write 
\begin{eqnarray*}
 \langle \v_l^\ve c_l^\ve, \nabla \psi_l \rangle_{\Omega^\ve_{l,T}}
= \frac 1{|Y|} \langle  \mathcal T^\ast_\ve(\v_l^\ve) \mathcal T^\ast_\ve(c_l^\ve),  \mathcal T^\ast_\ve(\nabla \psi_l)\rangle_{\Omega^\delta_T\times Y_l} +  \langle \v_l^\ve c_l^\ve, \nabla \psi_l \rangle_{\tilde\Omega^{\ve,\delta}_{l, T}}\;.
\end{eqnarray*}
Due to the boundedness of $c_l^\ve$ and  the {\it a priori} estimates for $\v_l^\ve$,  we obtain 
\begin{eqnarray*}
| \langle \v_l^\ve c_l^\ve, \nabla \psi_l \rangle_{\tilde\Omega^{\ve,\delta}_{l,T}}| \leq 
C \|\v^\ve_l\|_{L^2(\tilde\Omega^{\ve,\delta}_l)}\left[ \|\nabla \phi_l^1 \|_{L^2(\tilde\Omega^\delta_T)}+ \ve  \|\nabla \phi_l^2 \|_{L^2(\tilde\Omega^\delta_T\times Y_l)}\qquad \qquad\right.\\
\left.+ \|\nabla_y \phi_l^2 \|_{L^2(\tilde\Omega^\delta_{T}\times Y_l)} \right]\to 0 \; \;  \text{ as } \;  \delta \to 0\; , 
\end{eqnarray*}
where $\tilde \Omega^{\delta}= \{ x\in \Omega : \text{dist}(x,\partial \Omega) < \delta\} $.  Applying the weak convergence of $\mathcal T^\ast_\ve(\v_l^\ve)$,  the strong convergence of 
$  \mathcal T^\ast_\ve(\nabla \psi_l)$, the
 local  strong convergence of 
$ \mathcal T^\ast_\ve(c_l^\ve)$,   and letting   $\ve \to 0$ and  $\delta \to 0$ in that order,  we obtain  
$$
\langle \v_l^\ve  c_l^\ve, \nabla \psi_l\rangle_{\Omega^\ve_{l, T}}
\to1/|Y| \langle \v_l  c_l, \nabla \phi_l^1+ \nabla_y \phi_l^2\rangle_{\Omega_T\times Y_l} \; .
$$
 In a similar way as for $\v^\ve_l$,  the  regularity of $\hat \psi$ and the \textit{a priori} estimates and convergence results for $\hat \v^\ve_{av}$ and $\hat c^\ve_l$ imply 
 $$\frac 1\ve \langle \hat \v^\ve_{av} \hat c^\ve_{av}, \nabla \hat \psi  \rangle_{\Lambda^\ve_{av}, T} \to  |\hat Z|^{-1} \langle \hat \v_{av} \hat c, \nabla \hat \phi_1 + \nabla_y \hat\phi_2  \rangle_{\hat\Lambda_T\times Z_{av}} \;\; \text{ as } \;\; \ve \to 0 \;   \text{ and } \;  \delta \to 0. $$

The weak convergence of $\mathcal T^\ast_\ve(c_l^\ve)$ and $\mathcal T^\ast_\ve (\nabla c_l^\ve )$, in conjunction with the strong convergence of 
 $\mathcal T^\ast_\ve(\psi_l )$ and $\mathcal T^\ast_\ve(\nabla \psi_l )$, imply the convergence of $\langle \partial_t c_l^\ve,  \psi_l \rangle_{\Omega_l^\ve, T}$ and $\langle D_l^\ve \nabla c_l^\ve , \nabla \psi_l \rangle_{\Omega_l^\ve, T}$.
  Similar arguments  pertaining to the unfolding operator $\mathcal T^{\ast, bl}_\ve$ and the convergence results for unfolded sequences  prove the convergence of  
$\dfrac 1 \ve \langle \partial_t  \hat c_l^\ve , \hat \psi \rangle_{\Lambda_l^\ve, T}  $ and 
$\dfrac 1 \ve \langle   \hat D_l^\ve\nabla  \hat c_l^\ve, \nabla \hat\psi \rangle_{\Lambda_l^\ve, T} 
$. 
The weak convergence of $\mathcal T^\ast_\ve(c_l^\ve)$  in $L^2(\Omega_T\times\Gamma_l)$ and of $\mathcal T^{\ast, bl}_\ve(\hat c_l^\ve)$ in $L^2(\hat\Lambda_T\times R_l)$  (shown in Lemma~\ref{lem:convergence}) ensure the convergence of integrals over $\Gamma_l^\ve$ and $R^\ve_l$. 

Thus, we obtain the  macroscopic equations 
\begin{eqnarray*}
&&\, \,  \, \, \frac 1 {|Y|}\sum_{l=a,v}\left[\langle\partial_t c_l,  \phi_l^1 \rangle_{\Omega_T\times Y_l}  + \langle 
 D_l(y)(\nabla c_l + \nabla_y c_l^1)- \v_l  c_l, \nabla \phi_l^1+ \nabla_y \phi_l^2\rangle_{\Omega_T\times Y_l} \right]\\ 
&& +
\frac 1 {|\hat Z|} \left[\langle \partial_t \hat c, \hat \phi_1 \rangle_{\hat \Lambda_T\times Z_{av}} +
\langle   \hat D_{av}(y)(\nabla_{\hat x} \hat c + \nabla_y \hat c^1) - \hat \v_{av}\hat c, \nabla_{\hat x} \hat\phi_1 + \nabla_y \hat \phi_2\rangle_{\hat \Lambda_T\times Z_{av}}  \right]\\
 && 
= \frac 1{|Y|}  \sum_{l=a,v}\langle  \lambda_l( c_s- c_l),  \phi^1_l \rangle_{\Omega_T\times \Gamma_l}
+
\frac 1{|\hat Z|}\sum_{l=a,v} \langle\hat  \lambda_l(\hat c_s- \hat c),\hat \phi_1\rangle_{\hat \Lambda_T\times R_{l}}. 
\end{eqnarray*}
Furthermore, setting $\phi^1_l(t,x)=0$ in $\Omega_T$, with $l=a,v$,  and $\hat \phi_1(t,\hat x)=0$ in $\hat \Lambda_T$  we obtain 
\begin{equation}\label{macro_2-ox}
\begin{aligned}
& \frac 1{|Y|} \sum_{l=a,v}\langle  D_l(y)(\nabla c_l + \nabla_y c_l^1) 
- \v_l  c_l , \nabla_y \phi_l^2 \rangle_{\Omega_T\times Y_l} \\
+ &\frac 1{|\hat Z|}\langle \hat D_{av}(y)(\nabla_{\hat x} \hat c + \nabla_y \hat c^1) - \hat \v_{av} \hat c ,  \nabla_y \hat \phi_2 \rangle_{\hat \Lambda_T\times Z_{av}}  =0.  
\end{aligned}
\end{equation}

We now employ the divergence-free property of the velocity fields in $\Omega\times Y_l$ and $\hat \Lambda\times Z_{av}$ and the zero-boundary conditions on $\Gamma_l$ and  $R_l\cup Z_{av}^0\cup Z_{av}^1$.
These, and   the fact that $c_l$ and  $\hat c_{av}$ are independent of $y$,   yield
\begin{eqnarray*}
&&  \langle \v_l  c_l,  \nabla_y \phi_l^2 \rangle_{\Omega_T\times Y_l}  = -  
 \langle  \dv_y ( \v_l  \, c_l ),   \phi_l^2\rangle_{\Omega_T\times Y_l} +
\langle  \v_l \cdot \n \,   c_l,   \phi_l^2 \rangle_{\Omega_T \times \partial Y_l} = 0, \quad l=a,v, \\
&&  \langle \hat \v_{av}  \hat c,  \nabla_y \hat \phi_2 \rangle_{\hat \Lambda_T\times Z_{av}}  =  -  
 \langle  \dv_y (\hat  \v_{av}  \, \hat c ),   \hat \phi_2\rangle_{\hat \Lambda_T\times Z_{av}} +
\langle  \hat \v_{av} \cdot \n \,  \hat  c,   \hat \phi_2 \rangle_{\hat \Lambda_T \times \partial Z_{av}} = 0.
\end{eqnarray*}

Thus, taking first $\hat \phi_2(t,\hat x,y)=0$ in $\hat \Lambda_T\times Z$   and $\phi_l^2 \in C^\infty_0(\Omega_T; C^\infty_{\text{per}}(Y))$ with $\phi_l^2(t,x,y) =0 $ for   $y \in Y\setminus  Y_l$, $(t,x) \in \Omega_T$, and then $\hat \phi_2 \in C^\infty_0(\hat \Lambda_T; C_{per}^\infty (\hat Z; C_0^\infty(0,1)))$
  in \eqref{macro_2-ox}, we have 
\begin{eqnarray*}
&& \langle  D_l(y)(\nabla c_l + \nabla_y c_l^1), \nabla_y  \phi_l^2  \rangle_{\Omega_T\times Y_l} =0 \qquad \text{ for } \; \; l=a, v, \\
&&  \langle \hat D_{av}(y)(\nabla_{\hat x} \hat c + \nabla_y \hat c^1),  \nabla_y \hat \phi_2 \rangle_{\hat \Lambda_T\times Z_{av}} =0.
\end{eqnarray*}
 Using the linearity of the equations above,  we consider the ansatz 
\begin{equation*} 
c^1_l(t,x,y) = \sum\limits_{j=1}^n \partial_{x_j} c_l(t,x) w_l^j(y)  \quad \text{ for } \; \; l=a,v,   \quad 
\hat c^1(t,\hat x, y) = \sum\limits_{j=1}^{n-1} \partial_{x_j} \hat c(t, \hat x) \hat w^j_{av}(y)  ,
\end{equation*}
where $w^j_l$ and  $\hat w_{av}^j$  are solutions of the unit cell problems \eqref{UnitCell_Ox_1} and \eqref{unit_hat_1} respectively.

Then for $\phi_l^2=0$ and $\hat \phi_2 =0$, and using the ansatz for $c_l^1$   and $\hat c^1$, we obtain 
\begin{eqnarray*}
&&\sum_{l=a,v} \int_{\Omega_T}\left(\frac {|Y_l| }{|Y|} \partial_t c_l \, \phi_l^1 +(\mathcal A_l \nabla c_l-  \v_l^0 c_l) \nabla \phi_l^1  -    \lambda_l\frac{|\Gamma_l|}{ |Y|}( c_s- c_l)  \phi_l^1 \right) dx dt   
\\
&& + 
\int_{\hat \Lambda_T} \left(\frac { |Z_{av}|} {|\hat Z|}  \partial_t \hat c \, \hat \phi_1 + ( \hat{\mathcal{A}}_{av}  \nabla_{\hat x} \hat c - \hat \v^0_{av} \hat c)  \nabla_{\hat x} \hat \phi_1  -\sum_{l=a,v}   \hat\lambda_l \frac{|R_l|}{|\hat Z|} (\hat c_s- \hat c) \hat \phi_1 \right)  d\hat x dt=0,
\end{eqnarray*}
  where $\mathcal A_l $, $\v^0_l$, 
$\hat{\mathcal{A}}_{av}$ and $\hat \v^0_{av}$ are defined in \eqref{macro_vv} and \eqref{macro_veloc_ve}.  From the continuity conditions  \eqref{Diff_TransC}, we obtain 
$c_a(t, \hat x, 0) = \hat c(t,\hat x)$, $c_v(t, \hat x, 0) = \hat c(t,\hat x)$ on $\hat \Lambda_T$.
Considering $\phi_l^1\in C^\infty_0(\Omega_T)$ and $\hat \phi_1 =0$ and integrating by parts result in  the macroscopic equations for $c_a$ and $c_v$ in \eqref{macro_cl}-\eqref{macro_hat_c}. Considering
\begin{enumerate} 
\item[(a)] $\hat \phi_1 \in C^\infty_0(\hat \Lambda_T)$,  $\phi_l^1\in C^\infty(\overline\Omega_T)$ with  $\phi_l^1(t,x) =0$ on $\Gamma_D$ and $\phi_l^1(t,\hat x,0) = \hat \phi_1(t,\hat x)$ on $\hat \Lambda_T$,   and 
\item[(b)]    $\hat \phi_1 \in C^\infty(\overline{\hat \Lambda}_T)$, $\phi_l^1\in C^\infty(\overline\Omega_T)$ with  $\phi_l^1(t,x) =0$ on $\Gamma_D$ and $\phi_l^1(t,\hat x,0) = \hat \phi_1(t,\hat x)$ on $\hat \Lambda_T$,
\end{enumerate}
in that order, and integrating by parts result in  the macroscopic equation for    $\hat c$ in \eqref{macro_cl}-\eqref{macro_hat_c}.
Similar arguments imply the macroscopic equations for $c_s$ and $\hat c_s$. 
 The assumptions on the initial conditions ensure the existence of $\hat c^0, \hat c^0_s \in H^1(\hat \Lambda)$ 
such that $\hat c^{\ve, 0}\to \hat c^0$, $\hat c^{\ve, 0}_s \to\hat c^0_s$ in the two-scale sense.
  This and the two-scale convergence of $\partial_t c^\ve_l$, $\partial_t \hat c^\ve$  and $\partial_t \hat c^\ve_s$ imply that $c_l$,  $\hat c$ and $\hat c_s$ satisfy the initial conditions, where $l=a,v,s$. Considering the equations for the difference of two solutions of the macroscopic problem \eqref{macro_cl}-\eqref{macro_hat_c} yields the uniqueness of the solutions.   Finally,  taking $c_l^{-}$, $\hat c^{-}$, $\hat c^{-}_s$,  $(c_l - A)^{+}$,  $(\hat c - A)^{+}$ and  $(\hat c_s - A)^{+}$, for some $A\geq \max_{l=a,v,s}\{ \sup_{\Omega_T} c_{l,D}(t,x), \sup_{\Omega} c^0_l(x), \sup_{\hat \Lambda}\hat c^0(\hat x), \sup_{\hat \Lambda}\hat c^0_s(\hat x)\}$,  as test functions in \eqref{macro_cl}-\eqref{macro_hat_c} we obtain the non-negativity and boundedness of the solutions of the macroscopic problem.
\end{proof}


\section{The $\delta$ scaling for the skin layer with $0<\ve << \delta<<1$}\label{section7}

In this final section, we consider an alternative scaling for the depth $\delta$ of the skin layer. Specifically, we assume that the adopted tissue  geometry is characterized by two distinct  length scales: a scale $\delta>0$ representing the depth of the skin layer and a separate length scale $\ve>0$ characterizing the distance between arteries. In the remainder of this section, we assume that $0<\ve << \delta<<1$,  and we let  first $\ve \to 0$ and then   $\delta\rightarrow 0$. Under this scaling, the skin layer has a depth of multiple unit cells (of size $\ve$), and we assume that the arterial branching pattern is such that flow of blood is permitted between neighboring unit cells in the skin layer.

\begin{figure}
\begin{center}
\includegraphics[height=4.cm]{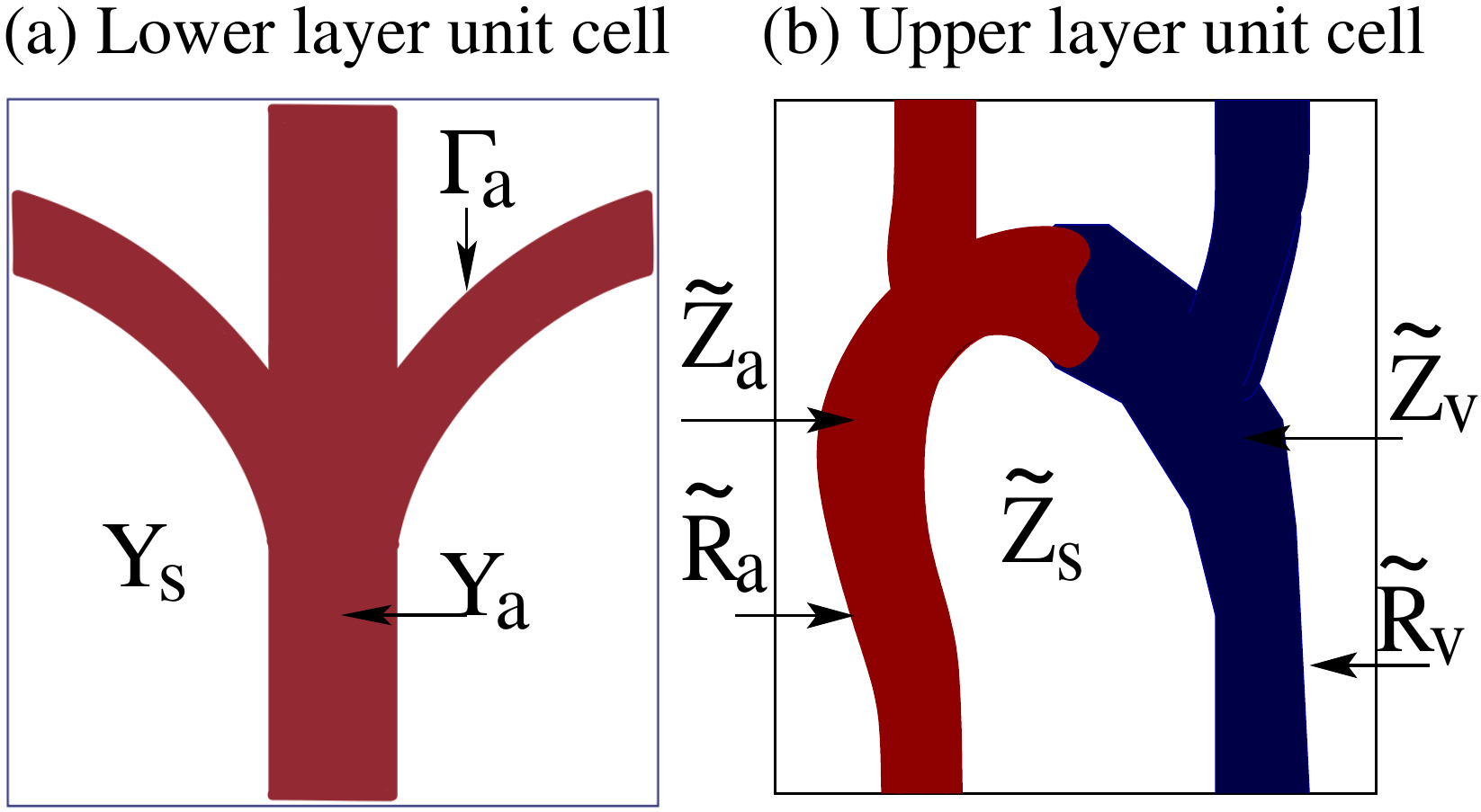}
\end{center}
\caption{Two-dimensional schematic representation of the two distinct, three-dimensional unit-cell geometries used in the microscopic model: (a) unit-cell geometry corresponding to the lower layer, i.e. the fat tissue layer; (b) unit-cell geometry corresponding to the upper layer, which represents the dermic and epidermic layers of the skin. Only the arterial blood vessels are shown in the fat tissue layer.}\label{fig3}
\end{figure}

\begin{figure}
\begin{center}
\includegraphics[height=2.5 cm]{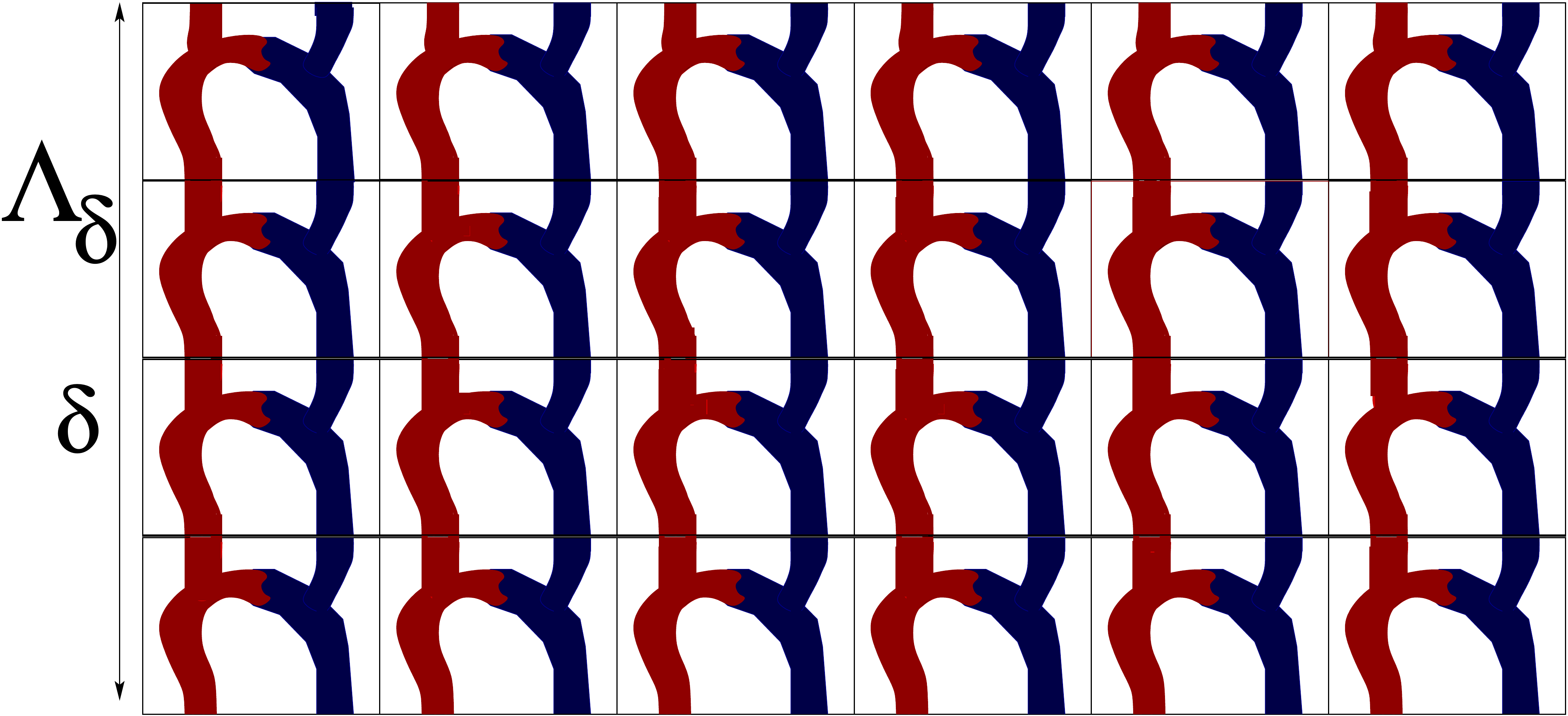}\; \; 
\includegraphics[height=3.2cm]{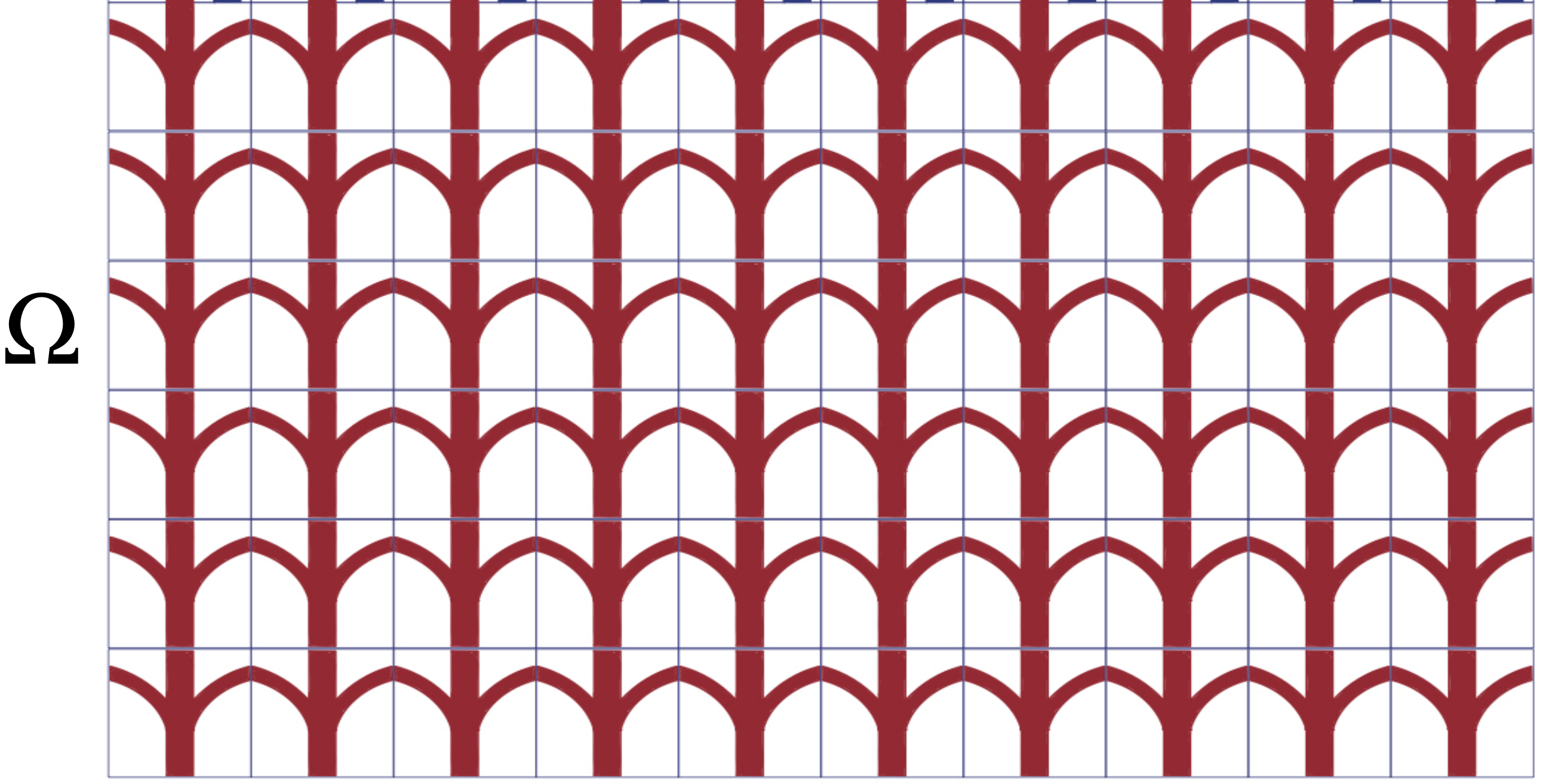}
\end{center}
\caption{Two dimensional schematic representation of the three-dimensional tissue layers discussed in the text. The domain on the left (denoted by $\Lambda^\delta$ in the text) corresponds to the dermic and epidermic layers of the skin, whereas the domain on the right (denoted by $\Omega$ in the text) corresponds to fat tissue. Only the arterial blood vessels are shown in the fat tissue layer. Arteries (in red) and veins (in blue) are shown in the skin tissue layer, which is characterized by the presence of arterial-venous connections, i.e. geometric regions where arteries and veins meet.  }\label{fig4}
\end{figure}

\subsection{Derivation of macroscopic equations for velocity fields}

We first derive the macroscopic equations for the arterial and venous blood velocity fields in the two tissue layers under the scaling assumption $0<\ve << \delta<<1$. The microscopic equations for the fluid flow  in the main tissue are as in \eqref{Stokes_Omega}. In the skin layer $\Lambda_\delta$, $(\hat \v_a^\ve, \hat p_a^\ve)$ and   $(\hat \v_v^\ve, \hat p_v^\ve)$ are  assumed to   satisfy
\begin{eqnarray}\label{Stokes_Lambda_delta}
\begin{cases}
 - \ve^2 \,\mu\,  \Delta \hat\v^\ve_l +  \nabla \hat p_l^\ve= 0 \; ,   \quad   \dv \, \hat\v_l^\ve=0  &  \quad \text{ in }  \Lambda_{l, \delta}^\ve, \medskip\\
  \hat\v_l^\ve=0  & \quad \text{ on }  \widetilde{R}_{l, \delta}^{\ve} \; , 
  \end{cases}
\end{eqnarray}
where $l= a, v$. We  impose appropriate  transmission  conditions on $\hat \Lambda$
\begin{eqnarray}\label{Stokes_TransC_delta}
\begin{cases}
(-2\,\ve^2  \mu  \operatorname{S}\!\v_l^\ve+p_l^\ve I)\cdot \n=(-2\, \ve^2 \mu   \S\hat \v_l^\ve+ \hat p_l^\ve I)\cdot \n &\quad \text{on }\partial\Omega_l^\ve\cap \hat \Lambda\; , \medskip\\
\v_l^\ve =  \displaystyle\frac{1}{\delta} \hat\v_l^\ve &  \quad \text{on }\partial\Omega_l^\ve\cap \hat \Lambda \; ,
\end{cases}
\end{eqnarray}
where $l=a,v$, 
along with boundary conditions \eqref{Stokes_BC_O} at the external boundaries  and the continuity conditions given in \eqref{Stokes_ContC}.  Moreover, 
\begin{equation}\label{Stokes_BC_L_delta}
\hat \v_l^\ve =0   \quad \text{ on } \partial \hat\Omega\times(0,\delta)\cap \partial \Lambda_l^\delta , \qquad \quad 
\hat \v_l^\ve =0  \quad \text{ on } \hat\Omega\times\{\delta\}\cap \partial \Lambda_l^\delta, \qquad l=a,v .
\end{equation}

\begin{proof}[\bf Proof of Theorem \ref{th:macro_ve}]
Similarly to Section~\ref{weak_sol}, we derive a priori estimates for $\v_l^{\ve}$ and $\hat \v_l^{\ve}$.
To derive the macroscopic equations \eqref{macro_ve_1}, 
we first consider   $\phi_l(x)=\ve\psi_l\left(x, \frac x \ve\right)$  and   $\hat \phi(x)=\ve\hat \psi\left(x, \frac x \ve\right)$ with
 $\psi_l \in C^\infty_0(\Omega, C^\infty_{\text{per}}(Y))$,  
 $\hat \psi \in C^\infty_0(\Lambda_\delta; C^\infty_{\text{per}}(\widetilde{Z}))$, $\psi_l = 0$ on $\Omega\times \Gamma_l$, and 
 $\hat \psi =0$ on $\Lambda_\delta \times \widetilde{R}_{av}$
  as test functions   for  the microscopic problem consisting of equations \eqref{Stokes_Omega}, \eqref{Stokes_BC_O}, \eqref{Stokes_ContC}, and \eqref{Stokes_Lambda_delta}--\eqref{Stokes_BC_L_delta}.
 Using the {\it a priori} estimates  and applying the two-scale limit, we obtain 
 that $p_a^\delta=p_a^\delta(x)$, $p_v^\delta=p^\delta_v(x)$ in $\Omega$ and  $\hat p^\delta= \hat p^\delta(x)$ in $\Lambda_\delta$.

Choosing now  $\phi_l(x)=\psi_l\left(x, \frac x \ve\right)$ and  $\hat \phi(x)=\hat \psi\left(x, \frac x \ve\right)$ as test functions,  where     $\psi_l \in C^\infty_0(\Omega, C^\infty_{\text{per}}(Y))$ and  $\hat \psi \in C^\infty_0(\Lambda_\delta; C^\infty_{\text{per}}(\widetilde{Z}))$ with  $\dv_y \psi_l =0$ and  $\dv_y \hat \psi =0$, as well as $\psi_l = 0$ on $\Omega\times \Gamma_l$ and 
 $\hat \psi =0$ on $\Lambda_\delta \times \widetilde{R}_{av}$,   we have 
\begin{equation}\label{macro_72}
\begin{aligned}
&\sum_{l=a,v} \frac 1{|Y|}\Big[\langle  2 \mu S_y \v^\delta_l,  S_y \psi_l \rangle_{\Omega\times Y_l} -  \langle  p^\delta_l,  \dv_x \psi_l \rangle_{\Omega\times Y_l}  - \frac 1{L} \langle p_l^0, \psi_{l,n} \rangle_{\Omega\times Y_l} \Big] \\
&\;  +
\frac 1 {\delta |\widetilde{Z}|}\left[\langle 2 \mu S_y \hat \v^\delta_{av},  S_y \hat \psi  \rangle_{\Lambda_\delta \times \widetilde{Z}_{av}}    -
\langle   \hat p^\delta,  \dv_{x} \hat \psi \rangle_{\Lambda_\delta \times \widetilde{Z}_{av}} \right].
\end{aligned}
\end{equation}
Using the divergence-free property of the velocity fields $\v^\ve_l$ and $\hat \v^\ve_l$, we obtain that  
\begin{equation}\label{div_delta_macro}
\begin{aligned}
& \dv_y \v^\delta_l =0  && \text{ in } \Omega\times Y_l,  &&
\dv\langle \v^\delta_l, 1\rangle_{Y_l}=0  && \text{ in } \Omega, \quad l=a,v, \\
& \dv_y \hat \v^\delta_{l} =0 && \text{ in }  \Lambda_\delta\times \widetilde{Z}_{l},  &&
\dv \langle \hat \v^\delta_{av}, 1  \rangle_{\widetilde{Z}_{av}}=0 && \text{ in } \Lambda_\delta.
\end{aligned} 
\end{equation}
Then considering  $\psi \in C^\infty(\overline\Omega)$ with $\psi(x)=0$ on $\partial \Omega \setminus \hat \Lambda$, and using the two-scale convergence of $\v^\ve_l$, we have 
\begin{eqnarray*}
0=-\lim\limits_{\ve \to 0 } \langle \dv \, \v_l^\ve, \psi  \rangle_{\Omega_l^\ve} =
|Y|^{-1}\langle \v_l^\delta\cdot \n,   \psi \rangle_{\hat \Lambda\times Y_l} - \lim\limits_{\ve \to 0 } \langle \v_l^\ve\cdot \n,  \psi \rangle_{\hat  \Lambda\cap \partial\Omega_l^\ve} .
\end{eqnarray*}
For $\hat \psi \in C^\infty(\overline \Lambda_\delta)$ with $\hat \psi(x) =0$ on $\partial\Lambda_\delta \setminus \hat \Lambda$,   and using $\hat \v^\ve_l =\delta \v^\ve_l$ on $\hat \Lambda\cap \partial \Omega_l^\ve$, we obtain
\begin{eqnarray*}
\lim\limits_{\ve \to 0 }  \langle \dv \, \hat \v_{av}^\ve, \hat \psi \rangle_{\Lambda_{av,\delta}^\ve} 
=  \lim\limits_{\ve \to 0 }  \left[\langle  \delta  \v_a^\ve \cdot \n,  \hat  \psi \rangle_{\partial \Omega_{a}^\ve\cap \hat \Lambda} + 
 \langle  \delta \v_v^\ve \cdot \n,  \hat \psi  \rangle_{\partial \Omega_{v}^\ve\cap \hat \Lambda}
 - \langle  \hat \v_{av}^\ve, \nabla \hat \psi \rangle_{\Lambda_{av, \delta}^\ve}\right]
\\ = 
 \langle \delta |Y|^{-1} \v_a^\delta\cdot \n,  \hat \psi  \rangle_{\hat\Lambda\times Y_a} + \langle \delta |Y|^{-1} \v_v^\delta\cdot \n,  \hat \psi  \rangle_{\hat\Lambda\times Y_v}  - \langle  |\widetilde{Z}|^{-1} \hat  \v^\delta_{av}\cdot \n,  \hat \psi  \rangle_{\hat\Lambda\times \widetilde{Z}_{av}} =0.
  \end{eqnarray*}
Considering $\psi \in C^\infty(\overline\Omega)$ and   $\hat\psi \in C^\infty(\overline\Lambda_\delta)$  with $\psi(x)=0$, $\hat \psi(x)=0$  on $\hat \Lambda$ and $\psi(x) =0$ on $\Gamma_D$,   and applying the divergence-free property of velocity fields and the boundary conditions we obtain that 
$$
 \langle  \v_l, 1 \rangle_{Y_l} \cdot \n=  0 \text{ on } \partial \hat \Omega \times (-L,0), \;\;
   \; \;  \langle \hat \v_{av}, 1\rangle_{\widetilde{Z}_{av}} \cdot \n=  0 \text{ on } \partial \hat \Omega \times (0,\delta)\cup \hat \Omega\times\{\delta\}. 
$$
 By applying integration by parts in \eqref{macro_72},  and  employing the fact  that the divergence-free space is orthogonal to the space of gradients of functions, we obtain (in the same maner as in section~\ref{macro_velocity_1}) the  macroscopic model 
\begin{equation}\label{two-scale_delta_1}
\begin{aligned}
& - \mu \Delta_y \v^\delta_l + \nabla p^\delta_l + \nabla_y p^{1, \delta}_l =0, \qquad && \dv_y \v_l^\delta =0 \; \;  && \text{ in } \Omega\times Y_l, \\
& - \mu \Delta_y \hat \v^\delta_{av} + \nabla \hat p^\delta + \nabla_y \hat p^{1, \delta}_{av} =0, \qquad && \dv_y \hat \v^\delta_{av} =0 \; \;  && \text{ in } \Lambda_\delta\times \widetilde{Z}_{av}, \\
&\phantom{-} \,  \v_l^\delta  = 0  \qquad  \quad \qquad \text{ on } \; \;  \Omega \times \Gamma_l, \qquad && \hat \v^\delta_{av} =0 \quad &&\text{ on } \Lambda_\delta \times \widetilde{R}_{av,\delta}\\
& \frac 1 {|Y|}\sum_{l=a,v} \langle \v_l^\delta, 1 \rangle_{Y_l}  \cdot \n  =  \frac 1{\delta |\widetilde{Z}|}\langle \hat  \v^\delta_{av},1 \rangle_{\widetilde{Z}_{av}} \cdot\n, &&p_l^\delta =  \hat p^\delta \; \; && \text{ on }  \hat \Lambda , \\
&  \langle \v_l^\delta \cdot \n, 1 \rangle_{Y_l} = 0 \quad \quad \text{ on } \; \; \partial \Omega \setminus (\Gamma_D \cup \hat \Lambda), &&  p_l^\delta = p_l^0 \; \;  && \text{ on }  \Gamma_D,\\
& \langle \hat \v^\delta_{av} \cdot \n, 1\rangle_{\widetilde{Z}_{av}}  = 0 \quad  \text{ on }\; \;  \partial \Lambda_\delta \setminus \hat \Lambda,\\
& \v^\delta_l, \; p^{1,\delta}_l\quad  \qquad Y-\text{periodic}, && \hat \v^\delta_{av}, \; \hat p^{1, \delta}_{av} &&  \widetilde{Z}-\text{periodic},
\end{aligned}
\end{equation}
where $p_l^{1,\delta} \in L^2(\Omega; L^2(Y_l)/\mathbb R)$, $\hat p_{av}^{1,\delta} \in L^2(\Lambda_\delta; L^2(\widetilde{Z}_{av})/\mathbb R)$, and $l=a,v$. We now consider the ansatz
\begin{eqnarray*}
&&\v_l^\delta(x,y)= - \sum\limits_{i=1}^n \partial_{x_i} p_l^\delta(x) \, \omega^i_l(y), \qquad \qquad
p^{1,\delta}_l(x,y) =  -\sum\limits_{i=1}^n \partial_{x_i} p_l^\delta(x) \,  \pi^i_l(y), \\
&&\hat \v^\delta_{av}(x,y)= - \sum\limits_{i=1}^n \partial_{x_i} \hat p^\delta(x) \, \widetilde \omega^i(y), 
\qquad \quad\;\; 
\hat p^{1,\delta}_{av}(x,y) = - \sum\limits_{i=1}^n \partial_{x_i} \hat p^\delta(x) \,  \widetilde \pi^i(y), 
\end{eqnarray*}
 where $l=a,v$, and $(\omega^i_l, \pi_l^i)$ and $(\widetilde \omega^i, \widetilde \pi^i)$ are solutions of the unit cell problems  \eqref{eq:omega1} and \eqref{unit_cell_7_2}.
Using these along  with \eqref{two-scale_delta_1} and \eqref{div_delta_macro}  we obtain the macroscopic equations in \eqref{macro_ve_1}, where  $\overline \v^\delta_{l}(\cdot) = |Y|^{-1} \int_{Y_l} \v_l^\delta(\cdot, y) dy$  and $\widetilde \v^\delta_{av}(\cdot) = |\widetilde{Z}|^{-1} \int_{\widetilde{Z}_{av}} \hat \v_{av}^\delta(\cdot, y) dy$.

We remark that similar results have been obtained in \cite{AndroJaeger}. We also note that the Dirichlet boundary conditions on $\Gamma_D$ ensure the uniqueness of the solution of problem \eqref{macro_ve_1}.
\end{proof}

\begin{proof}[\bf Proof of Theorem \ref{th:macro_delta}]
We rewrite the equations in \eqref{macro_ve_1} in weak form: 
 \begin{eqnarray}\label{delta_weak_1}
 \langle \mathcal K_a \nabla p^\delta_a, \nabla \phi_a \rangle_\Omega +  \langle \mathcal K_v \nabla p^\delta_v, \nabla \phi_v \rangle_\Omega + 
 \frac 1 {\delta}  \langle \widetilde {\mathcal K} \nabla \hat p^\delta, \nabla \hat \phi \rangle_{\Lambda_\delta} = 0
  \end{eqnarray}
   for $\phi_l \in W(\Omega)$,  $\hat \phi \in H^1(\Lambda_\delta)$ and $\phi(x) = \hat \phi(x)$ for a.a. $x\in \hat \Lambda$.  Considering $p^\delta_l + \frac{x_n} L p_l^0$ and $\hat p^\delta$ as test functions in \eqref{delta_weak_1}, and using the continuity condition $p_l^\delta= \hat p^\delta$ on $\hat \Lambda$,  we obtain  
   \begin{eqnarray*}
   \|p_l^\delta \|_{H^1(\Omega)} \leq C , \quad \frac 1\delta \|\hat p^\delta \|_{H^1(\Lambda_\delta)} \leq C.
   \end{eqnarray*}
 Hence, considering $ \tilde p^\delta (\hat x, y_n)= \hat p^\delta(\hat x, \delta y_n)$, we obtain that
  $$
  \| \tilde p^\delta \|_{L^2(\hat \Lambda \times (0,1))} \leq C , \quad  \|\nabla_{\hat x}\tilde p^\delta \|_{L^2(\hat \Lambda \times (0,1))} \leq C, \quad  \| \nabla_{y_n}  \tilde p^\delta \|_{L^2(\hat \Lambda \times (0,1))} \leq C\delta,
  $$
  and  there exist  subsequences, denoted again by $p_l^\delta$ and $ \tilde p^\delta$, and functions $p_l \in H^1(\Omega)$,   $\hat p \in H^1(\hat \Lambda\times (0,1))$, $\hat p^1\in L^2(\hat \Lambda; H^1(0,1))$, with $\hat p$ being constant in $x_n$, such that 
   \begin{eqnarray*}
   p_l^\delta  \rightharpoonup p_l\; \text{ in }\; H^1(\Omega), \;\;  \tilde p^\delta   \rightharpoonup \hat p , \;\nabla_{\hat x}  \tilde p^\delta   \rightharpoonup  \nabla_{\hat x} \hat p, \;   \delta^{-1} \partial_{y_n}\tilde p^\delta  \rightharpoonup  \partial_{y_n}\hat p^1\;  \text{ in } \;  L^2(\hat \Lambda\times(0,1)). 
   \end{eqnarray*}
   
     The continuity of pressures implies the boundary conditions for $p_a$ and $p_v$ in \eqref{macro_delta_ve}.
Considering $\phi_l \in C^\infty_0(\Omega)$ and $\hat \phi =0$ as  test functions in \eqref{delta_weak_1},  and using the weak convergence of $p_l^\delta$, where $l=a,v$, we obtain the equations 
for  $p_a$ and $p_v$ in \eqref{macro_delta_ve}.

We now consider the test functions  $\phi_l \in C^\infty(\overline\Omega)\cap W(\Omega)$  and    $\hat \phi(x)= \hat \phi_1(\hat x) +\delta \hat \phi_2(\hat x, x_n/\delta)$ with $\hat \phi_1 \in C^\infty(\overline{\hat \Lambda})$, $\hat \phi_2 \in C^\infty_0(\hat \Lambda\times (0,1))$   and  $\phi_l(x)=\hat \phi_1(\hat x)$ on $\hat \Lambda$. Using these in \eqref{delta_weak_1} and taking the limit as $\delta \to 0$
 we obtain 
   \begin{eqnarray*}
&&\sum_{l=a,v} \langle \mathcal K_l \nabla p_l \cdot \n,  \hat \phi_1 \rangle_{\hat \Lambda} + 
    \langle \widetilde {\mathcal K}( \nabla_{\hat x}  \hat p + \partial_{y_n} \hat p^1{\bf e}_n), \nabla_{\hat x} \hat  \phi_1 +
    \partial_{y_n} \hat \phi_2 {\bf e}_n \rangle_{\hat \Lambda\times(0,1)} =  0.
  \end{eqnarray*}
 Taking $\hat \phi_1 =0$ and using the fact that $\widetilde {\mathcal K}$ does not depend on $y_n$ imply that $\hat p^1$ is constant with respect to $y_n$.
Finally,  by considering first $\hat \phi_1 \in C^\infty_0({\hat \Lambda})$  and then $\hat \phi_1 \in C^\infty(\overline{\hat \Lambda})$, we  derive the macroscopic equation and boundary conditions  for $\hat p$ in \eqref{macro_delta_ve}. 
\end{proof}

\subsection{Derivation of macroscopic equations for oxygen concentrations}
We now turn our attention to the oxygen concentrations in arterial blood, venous blood, and tissue, under the scaling assumption    $0<\ve << \delta<<1$ that was delineated in section \ref{section7}. Theorem \ref{thm7-2} provides the macroscopic equations for these quantities as $\ve \to 0$ while keeping $\delta$ fixed.

 We consider the same microscopic equations as in  \eqref{Diff_AV}--\eqref{Diff_InitC}  with the scaling $1/\delta$ instead of $1/\ve$ in the transmission conditions \eqref{Diff_TransC}. Also, for the initial data, we assume that $\delta^{-1} \| \hat c^{\delta,0}_l\|^2_{H^2(\Lambda_\delta)} +  \| \hat c^{\delta,0}_l\|_{L^\infty(\Lambda_\delta)} \leq C$ instead of the corresponding assumption  on the $H^2(\Lambda^\ve)$ and $L^\infty(\Lambda^\ve)$-norms. 

\begin{proof}[\bf Proof of Theorem~\ref{thm7-2}]
 Similarly to  Lemma~\ref{Lemma:aprior} in Section~\ref{section4}  we can prove {\it a priori} estimates and convergence results for $c^\ve_l$ and $\hat c^\ve_l$, where  $l=a,v,s$.  
 We consider $\psi^\ve_l(t,x)=\phi^1_l(t,x)+ \ve \phi^2_l(t,x, x/\ve)$ and $\hat \psi^\ve(t,x)= \hat \phi_1(t,x)+ \ve \hat \phi_2(t,x, x/\ve)$, with $\phi^1_l \in C^\infty(\overline \Omega_T)\cap L^2(0,T; W(\Omega))$,  $\phi^2_l \in C^\infty_0(\Omega_T, C^\infty_{\text{per}}(Y))$, $\hat \phi_1 \in C^\infty(\overline \Lambda_{\delta, T})$,  and $\hat \phi_2 \in C^\infty_0(\Lambda_{\delta, T}, C^\infty_{\text{per}}(\widetilde{Z}))$, as test functions in \eqref{micro_weak_av} and \eqref{micro_weak_tissue}.
 Similarly to  the proof of Theorem~\ref{6-1},  using the convergence of $\mathcal T^\ast_\ve(c_l^\ve)$ and 
 $\mathcal T^\ast_\ve(\hat c_j^\ve)$,  
   along with the two-scale convergence of 
$\v^\ve_l$ and $\hat \v^\ve_l$,  and letting $\ve \to 0$ yield
\begin{eqnarray*}
\frac 1 {|Y|}\sum_{l=a,v} \langle \partial_t c_l^\delta,  \phi^1_l \rangle_{\Omega_T\times Y_l} + 
 \langle D_l(y)(\nabla c_l^\delta + \nabla_y c_l^{1,\delta}) -   \v_l^\delta  c_l^\delta, 
\nabla \phi^1_l + \nabla_y \phi_l^2 \rangle_{\Omega_T\times Y_l}  \\
+\frac 1 \delta \frac 1 {|\widetilde Z|} \Big[ \langle \partial_t \hat c^\delta_{av} , \hat \phi_1\rangle_{\Lambda_{\delta,T}\times \widetilde{Z}_{av}}  +
 \langle \hat D_{av}(y)(\nabla \hat c^\delta_{av} + \nabla_y \hat c^{1, \delta}_{av}) - \hat\v_{av}^\delta  \hat c^\delta_{av}, 
\nabla \hat \phi_1 + \nabla_y \hat \phi_2\rangle_{\Lambda_{\delta,T}\times \widetilde{Z}_{av}}  \Big]
\\=   \frac 1 {|Y|} \sum_{l=a,v}
\langle \lambda_l( c_s^\delta- c_l^\delta),  \phi^1_l  \rangle_{\Omega_T\times \Gamma_l} +
\frac 1 \delta \frac 1 {|\widetilde Z|}  \sum_{l=a,v} \langle  \lambda_l(\hat c_s^\delta- \hat c^\delta_{av}), \hat \phi_1\rangle_{\Lambda_{\delta,T}\times\widetilde{R}_l}.
 \end{eqnarray*}
In order to derive the macroscopic model \eqref{macro_av_ve} we proceed in a similar way as in the proof of Theorem~\ref{6-1}.  Choosing  first  $\phi^1_l=0$ and $\hat \phi_1=0$  and applying the divergence-free property and the boundary conditions for the velocity fields  we obtain 
\begin{eqnarray*}
 && \langle D_l(y)(\nabla c_l^\delta + \nabla_y c_l^{1,\delta}), 
 \nabla_y \phi_l^2 \rangle_{\Omega_T\times Y_l} =0, \qquad l = a,v,  \\
&& \frac 1 \delta \langle \hat D_{av}(y)(\nabla \hat c^\delta_{av} + \nabla_y \hat c^{1, \delta}_{av}),  \nabla_y \hat \phi_2\rangle_{\Lambda_{\delta,T}\times \widetilde{Z}_{av}} = 0. 
\end{eqnarray*}
Then we consider the ansatz 
\begin{eqnarray*}
c^1_l(t,x,y)= \sum_{j=1}^n \partial_{x_j} c_l(t,x) w^j_l(y)\; \;  \text{ and } \; \; \hat c^1_{av}(t,x,y)= \sum_{j=1}^n \partial_{x_j} \hat c(t,x) \widetilde w^j_{av}(y)\, ,
\end{eqnarray*}
where $w^j_l$ and $\widetilde w^j_{av}$ are solutions of the unit cell problems \eqref{UnitCell_Ox_1} and \eqref{unit_cell_2_hat}, 
and we  take $\phi_l^2 =0$ and $\hat \phi_2=0$ to arrive at 
 the macroscopic equations \eqref{macro_av_ve}.

The macroscopic equations \eqref{macro_s_ve} for the oxygen concentration  in tissue are derived in a similar manner.
Standard arguments pertaining to the difference of two solutions imply the uniqueness of the solutions of the macroscopic model consisting of equations \eqref{macro_av_ve}  and  \eqref{macro_s_ve}.
\end{proof}

\begin{proof}[\bf Proof of Theorem \ref{th:macro_ox_delta}]
Similarly to Lemma~\ref{Lemma:aprior} we can derive {\it a priori} estimates for   $c^\delta_l$ and $\hat c^\delta_m$,   
\begin{eqnarray}\label{a_priori_delta}
\begin{aligned}
&\| c^\delta_l \|_{L^\infty(0,T; H^1(\Omega))} + 
\frac 1 \delta \| \hat c^\delta_m \|_{L^\infty(0,T; H^1(\Lambda_\delta))}  \leq C, \\
&  \| \tilde c^\delta_m \|_{L^2(\Lambda^1_T)} +  \| \nabla_{\hat x}\tilde c^\delta_m \|_{L^2(\Lambda^1_T)} \leq C, \qquad 
   \|  \nabla_{y_n}\tilde c^\delta_m \|_{L^2(\Lambda^1_T)}\leq C \delta, \\
& \|\partial_t c^\delta_l \|_{L^2(\Omega_T)}   
 +\frac 1{\delta} \|\partial_t \hat c^\delta_m \|_{L^2(\Lambda_{\delta,T})} + 
  \| \partial_t \tilde c^\delta_m \|_{L^2(\Lambda^1_T )}  \leq C
   \end{aligned}
\end{eqnarray}
for $l=a,v,s$, $\,m=av, s$, where  $\tilde c^\delta_m(t, \hat x, y_n) = \hat c^\delta_m(t, \hat x, \delta y_n)$, $\Lambda^1_T=\hat \Omega\times (0,1)\times(0,T)$,  and the  constant $C$ is independent of $\delta$. 
Thus there exist functions $c_l \in L^2(0,T; H^1(\Omega)) \cap H^1(0,T; L^2(\Omega))$,   $\hat c_m \in L^2(0,T; H^1(\Lambda^1))\cap H^1(0,T; L^2(\Lambda^1))$, and $\hat c_m^1 \in L^2(\hat \Lambda_T; H^1(0,1))$,  with $\hat c_m$ being independent of $x_n$,    such that 
\begin{eqnarray}\label{convergence}
\begin{aligned}
&c^\delta_l \rightharpoonup c_l  && \text{ in } L^2(0,T; H^1(\Omega)),   
 && \partial_t  c^\delta_l \rightharpoonup \partial_t c_l   && \text{ in } L^2(\Omega_T),  \\
&\tilde c^\delta_m \rightharpoonup \hat c_m && \text{ in } L^2(0,T; H^1(\Lambda^1)),   
&& \partial_t  \tilde c^\delta_m \rightharpoonup \partial_t \hat c_m  && \text{ in } L^2(\Lambda^1_T), \quad\\
& c^\delta_l \to c_l  && \text{ in } L^2(\Omega_T),   && \tilde c^\delta_m \to \hat c_m && \text{ in } L^2(\Lambda^1_T),\\
& \delta^{-1} \partial_{y_n} \tilde c^\delta_m \rightharpoonup \partial_{y_n} \hat c^1_m && \text{ in } L^2(\Lambda^1_T), 
 \end{aligned}
\end{eqnarray}
where $l=a,v,s$ and $m=av, s$. Finally,  we use  test functions 
\begin{enumerate}
\item[(a)] $\phi_l \in C^\infty_0(\Omega_T)$  and $\hat \phi =0$, and 
\item[(b)]  $\phi_l \in C^\infty(\overline \Omega_T)$,  $\hat \phi(t,x) = \hat \phi_1(t,\hat x) +\delta \hat \phi_2(t,\hat x, x_n/\delta)$,  with $\hat \phi_1 \in C^\infty_0({\hat \Lambda}_{T})$,   $\hat \phi_2 \in C^\infty_0({\hat \Lambda}_{T}\times (0,1))$, and $\phi_l(t,x) = \hat \phi_1(t,x)$ on $\hat \Lambda_T$  
\end{enumerate}
in that order.  In the same way as in the proof of Theorem~\ref{th:macro_delta},  using the convergence results in \eqref{convergence}, along with the convergence of $\overline \v^\delta_l$ and $\widetilde \v^\delta_{av}$ (ensured by the convergence of $\nabla p^\delta_l$ and $\nabla \hat p^\delta$),   taking the limit as $\delta\to 0$,  and applying the fact that $\widetilde{\mathcal  A}_m$ are independent of $y_n$,    we obtain the limit equations in \eqref{macro_delta_av}  and \eqref{macro_delta_s}. 
The continuity conditions  for  $c^\delta_l$ and $\hat c^\delta_j$ on $\hat \Lambda_T$ ensure the continuity conditions for the limit functions $c_l$, $\hat c_j$  for $l=a,v,s$, $j = av, s$.
 The assumptions on the initial data ensure the existence of $\hat c^{0}, \hat c^0_s \in H^1(\hat \Lambda)$  such that $\hat c^{0,\delta}(\hat x, \delta y_n) \to \hat c^0(\hat x)$ and $\hat c^{0,\delta}_s(\hat x, \delta y_n) \to \hat c^0_s(\hat x)$ in $L^2(\hat \Lambda\times(0,1))$. 
Then, using the convergence of $\partial_t c_l^\delta$ and $\partial_t \tilde c_m^\delta$, we obtain that the initial conditions for $c_l$ and $\hat c_m$ are satisfied.
 Standard arguments imply the uniqueness of the solution of the macroscopic model consisting of equations \eqref{macro_delta_av}  and \eqref{macro_delta_s}. 
\end{proof}

\bibliographystyle{plain}
\bibliography{fluid} 

\end{document}